\DeclareSymbolFont{cyrletters}{OT2}{wncyr}{m}{n}
\DeclareMathSymbol{\Sha}{\mathalpha}{cyrletters}{"58}
\titleformat{\section}[hang]{
    \usefont{T1}{qhv}{b}{n}\selectfont} %
    {} 
    {0em}
    {\hspace{-0.4pt}\Large \thesection\hspace{0.6em}}
\renewcommand{\refname}{References}
\renewcommand{\sectionmark}[1]%
{\markright{{\thesection\ #1}}}
\renewcommand{\subsectionmark}[1]%
{}
\newcommand{\helv}{%
\fontfamily{phv}\fontseries{b}\fontsize{9}{11}\selectfont}
\definecolor{linkcolour}{rgb}{0,0.2,0.6}
\DeclareDocumentCommand{\newfaktor}{s m O{0.5} m O{-0.5}}{%
  \setbox0=\hbox{\ensuremath{#2}}%
  \setbox1=\hbox{\ensuremath{\diagup}}%
  \setbox2=\hbox{\ensuremath{#4}}%
  \raisebox{#3\ht1}{\usebox0}%
  \mkern-5mu\ifthenelse{\equal{#1}{\BooleanTrue}}%
    {\diagup}%
    {\rotatebox{-44}{\rule[#5\ht2]{0.4pt}{-#5\ht2+#3\ht0+\ht0}}}%
  \mkern-4mu%
  \raisebox{#5\ht2}{\usebox2}%
}
\xapptocmd{\@sect}{\csname #1mark\endcsname{#7}}{}{}
\newtheoremstyle{mystyle}%
  {0.4cm}%
  {0.3cm}%
  {\itshape}%
  {}%
  {\fontsize{10.5pt}{0pt}\scshape  }%
  {}%
  {  }%
  {}%
\theoremstyle{mystyle}
\newtheorem{lemma}{Lemma}[section]
\newtheorem{thm}[lemma]{Theorem}
\newtheorem{corol}[lemma]{Corollary}
\newtheorem{prop}[lemma]{Proposition}
\newtheorem*{claim}{Claim}
\newtheoremstyle{roman}%
  {0.4cm}%
  {0.3cm}%
  {\normalfont}%
  {}%
  {\scshape }%
  {}%
  {  }%
  {}%
\theoremstyle{roman}
\newtheorem{mydef}[lemma]{Definition}
\newtheorem{example}[lemma]{Example}
\newtheorem{notat}[lemma]{Notation}
\newtheorem{constr}[lemma]{Construction}
\newtheorem{remark}[lemma]{Remark}
\newtheorem{question}[lemma]{Question}
\newtheorem*{thm*}{Theorem}
\newcounter{saveenumerate}
\newcommand{\enumeratext}[1]{%
\setcounter{saveenumerate}{\value{enum\romannumeral\the\@enumdepth}}
\end{enumerate}
#1
\begin{enumerate}[i)]
\setcounter{enum\romannumeral\the\@enumdepth}{\value{saveenumerate}}%
}
\newcommand{\hyp}{\operatorname{hyp}}
\newcommand{\nchyp}{\operatorname{nchyp}}
\newcommand{\perm}{{\rm perm}}
\renewcommand{\triv}{{\rm triv}}
\newcommand{\BR}{\operatorname{BR}}
\renewcommand{\char}{{\rm char}}
\newcommand{\defl}{{ \rm defl}}
\renewcommand{\inf}{{ \rm inf}}
\renewcommand{\cyc}{{ \rm cyc}}
\newcommand{\sd}{{\mathrm{sd}}}
\edef\restoreparindent{\parindent=\the\parindent\relax}
\renewcommand\up{\mathord{\uparrow}}
\renewcommand\down{\mathord{\downarrow}}
\renewcommand{\BR}{\mathord{B \hspace{-0.02cm} R}}
\newcommand{\br}{\textit{br}}
\begin{document}
\title{\vspace{-2cm} Regulator constants of integral representations of \linebreak finite groups}
\author{ Alex Torzewski}
\date{}
\thispagestyle{plain}
\maketitle
\vspace{-0.3cm}
\begin{abstract}
	Let $G$ be a finite group and $p$ be a prime.  We investigate isomorphism invariants of $\mathbb{Z}_{p}[G]$-lattices whose extension of scalars to $\q_p$ is self-dual, called regulator constants. These were originally introduced by Dokchitser--Dokchitser in the context of elliptic curves. Regulator constants canonically yield a pairing between the space of Brauer relations for $G$ and the subspace of the representation ring for which regulator constants are defined. For all $G$, we show that this pairing is never identically zero. For formal reasons, this pairing will, in general, have non-trivial kernel. But, if $G$ has cyclic Sylow $p$-subgroups and we restrict to considering permutation lattices, then we show that the pairing is non-degenerate modulo the formal kernel. Using this we can show that, for certain groups, including dihedral groups of order $2p$ for $p$ odd, the isomorphism class of any $\z_p[G]$-lattice whose extension of scalars to $\q_p$ is self-dual, is determined by its regulator constants, its extension of scalars to $\q_p$, and a cohomological invariant of Yakovlev.
\end{abstract}
\setcounter{tocdepth}{2}
\vspace{-1cm}
\tableofcontents
 
\section{Introduction}
Regulator constants are invariants of representations of a finite group introduced by Dokchitser--Dokchitser (see for example \cite{DDRegConstParity}). We shall briefly recall some of the basic properties of regulator constants (cf.\ Section \ref{regcon}).

Let $G$ be a finite group. A Brauer relation in characteristic zero (resp.\ characteristic $p$) consists of a pair of $G$-sets for which the associated permutation modules over $\q$ (resp.\ $\F_p$) are isomorphic. Characteristic zero (resp.\ $p$) relations form a free abelian group of finite rank, which we denote by $\br_0(G)$ (resp.\ $\br_p(G)$). All characteristic $p$ relations are also characteristic zero relations so that $\br_p(G)\subseteq\br_0(G)$ (Lemma \ref{BRp=Brzpapp}).

If $\R$ is a ring, then by an $\R[G]$-lattice we mean an $\R[G]$-module which is free of finite rank as an $\R$-module. We will be mainly interested in the case of $\R=\z_p$, the $p$-adic integers, and $\R=\zp$, the localisation of $\z$ at $p$. Let $\q_p$ denote the field of $p$-adic numbers.  We call a $\z_p[G]$-lattice \emph{rationally self-dual} if its extension of scalars to $\q_p$ is self-dual. Each characteristic zero Brauer relation $\theta$ defines a regulator constant $C_\theta(-)$ which assigns to a rationally self-dual $\z_p[G]$-lattice $M$ an element $C_\theta(M)\in \q_p\tii/(\z_p\tii)^2$. As will be made precise later, $C_\theta(M)$ measures the relative covolumes of certain fixed subspaces corresponding to the $G$-sets defining $\theta$ (see Definition \ref{regdef}).
 
 In several number theoretic contexts, regulator constants have been found to both coincide with naturally occurring objects and to be computationally accessible. For example, if $K/\q$ is a Galois extension of number fields with $G=\gal(K/\q)$, $E/\q$ is an elliptic curve and $M=E(K)/E(K)_\tors$, the torsion-free quotient of the Mordell-Weil group of $E$, then the regulator constants of $M\ot \z_p$ are closely related to the elliptic regulator of $E$ \cite{DDRegConstParity}. Similarly, if $M=\ok\tii/\mu_K$ is the unit group of $K$ modulo roots of unity, then the regulator constants of $M\ot \z_p$ are closely related to Dirichlet's unit group regulator \cite{BartelDih}.
 
 The applications of regulator constants are dependent on showing that regulator constants are good invariants of lattices. In this paper, we systematically investigate the strength of regulator constants as invariants of lattices.
 
 Let $a(\z_p[G])$ denote the representation ring of $G$ over $\z_p$. We write $a(\z_p[G],\sd)$ for the subring generated by $\z_p[G]$-lattices which are rationally self-dual. Set $A(\z_p[G])=a(\z_p[G])\ot_\z \q$, $A(\z_p[G],\sd)=a(\z_p[G],\sd)\ot_\z \q$, $\BR_0(G)=\br_0(G)\ot_\z \q$, $\BR_p(G)=\br_p(G)\ot_\z \q$.  Regulator constants are multiplicative in direct sums of lattices and also under summing Brauer relations. As such, if $v_p(-)$ denotes the $p$-adic valuation, then there is a pairing
 \begin{align*}
 v_p(C_{(-)}(-)) \colon \BR_0(G) \ti A(\z_p[G],\sd)&\longrightarrow \q\\
 (\theta, M) \quad\quad\qquad& \longmapsto v_p(C_\theta(M)).
 \end{align*}
The space $\BR_0(G)$ is always finite dimensional, whilst $A(\z_p[G],\sd)$ will regularly be infinite dimensional. For trivial reasons, elements of $\BR_p(G)$ always lie in the kernel of $v_p(C_{(-)}(-))$.  But, one might say that regulator constants are good invariants if the left kernel consists only of characteristic $p$ relations. It is one of our main results that if $G$ has cyclic Sylow $p$-subgroups, then this is always the case. But let us be more precise.

 Outside of small families of groups we do not have classifications of the $\z_p[G]$-lattices. On the other hand, the isomorphism classes of permutation modules, that is $\z_p[G]$-lattices on which $G$ acts by permuting a choice of basis, are easy to enumerate and it is possible to give a formula for their regulator constants in terms of group theoretic information. For this reason we shall primarily restrict our attention from $A(\z_p[G],\sd)$ to $A(\z_p[G],\perm)$, the subspace generated by permutation modules. 

Again, for trivial reasons $A(\z_p[G],\cyc)$, the subspace generated by the permutation modules $\z_p[G/H]$ for $H\le G$ cyclic, lies in the kernel of $v_p(C_{(-)}(-))$. We refer to the resulting pairing
	\[ \la \ \, , \ \ra_\perm \colon \BR_0(G)/\BR_p(G) \ti A(\z_p[G],\perm)/A(\z_p[G],\cyc) \to \q   \]
as the \emph{permutation pairing}. A not immediately obvious fact is that both $\BR_0(G)/\BR_p(G)$ and $A(\z_p[G],\perm)/A(\z_p[G],\cyc)$ are canonically isomorphic to the free $\q$-vector space on the set of conjugacy classes of $p$-hypo-elementary subgroups. With respect to this identification, the pairing is symmetric. Prior to quotienting the spaces need not have the same dimension and there is no such identification (cf.\ Remark \ref{kernels}).
\begin{thm}\label{intro1}
	For a finite group $G$ and prime $p$ such that $G$ has cyclic Sylow $p$-subgroups, the permutation pairing is non-degenerate.
\end{thm}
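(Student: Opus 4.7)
The plan is to reduce non-degeneracy of the permutation pairing to showing that a square Gram matrix is non-singular. As stated just above the theorem, both $\BR_0(G)/\BR_p(G)$ and $A(\z_p[G],\perm)/A(\z_p[G],\cyc)$ are canonically isomorphic to the free $\q$-vector space on conjugacy classes of $p$-hypo-elementary subgroups of $G$, and the pairing is symmetric under this identification. It therefore suffices to choose bases indexed by such conjugacy classes in which the Gram matrix is upper triangular with nonzero diagonal, with respect to a total order refining the subconjugacy partial order: a symmetric upper-triangular matrix is diagonal, and with non-vanishing diagonal it is invertible.

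For each conjugacy class $[H]$ of $p$-hypo-elementary subgroups I would construct basis elements as follows. The cyclic Sylow $p$-subgroup hypothesis on $G$ forces every $p$-hypo-elementary $H \le G$ to take the form $C \rtimes P$ with $C$ cyclic of order prime to $p$ and $P$ cyclic; for such $H$ one can write down an explicit Brauer relation $\theta_H \in \br_0(H) \setminus \br_p(H)$ supported on proper subgroups of $H$. Set $\Theta_H := \Ind_H^G \theta_H \in \BR_0(G)$ on the left-hand side. On the right-hand side, apply a table-of-marks / M\"obius-inversion change of basis to $\{[\z_p[G/K]]\}_{[K]}$, producing a basis $\{M_{[K]}\}_{[K]}$ whose fixed-point functor values are concentrated at $[K]$ in the $p$-hypo-elementary part of the Burnside ring.

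Computing the Gram matrix entries $\langle \Theta_H, M_K \rangle_\perm$ proceeds via Frobenius reciprocity for regulator constants combined with Mackey decomposition, reducing each entry to a sum over double cosets $x \in H \backslash G / K$ of intrinsic regulator constants on $H$ of the permutation modules $\z_p[H/(H \cap xKx^{-1})]$. Together with the support condition on $\theta_H$ and the concentration property of $M_K$, this yields upper triangularity. The decisive step will be the non-vanishing of the diagonal entries $\langle \Theta_H, M_H \rangle_\perm$. By the explicit formula $C_\theta(\z_p) \equiv \prod_J |J|^{n_J} \pmod{(\z_p^\times)^2}$ for regulator constants of permutation Brauer relations evaluated on the trivial lattice, this reduces to the assertion that $\sum_J n_J v_p(|J|) \ne 0$ for the chosen $\theta_H$. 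The cyclic Sylow hypothesis enters essentially here: it is what guarantees that $\br_0(H)/\br_p(H)$ is detected by the $p$-adic valuation of $\prod_J |J|^{n_J}$ for the $p$-hypo-elementary groups that arise. The hard part will be producing the $\theta_H$ explicitly on each $p$-hypo-elementary $H = C \rtimes P$ with $P$ cyclic, and then carefully bookkeeping Mackey contributions from double cosets $x \neq 1$ in the diagonal computation to rule out accidental cancellation with the leading $x = 1$ term.
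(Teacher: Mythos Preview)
Your proposal has a genuine gap in the triangularity strategy. First, a logical slip: you invoke ``a symmetric upper-triangular matrix is diagonal,'' but then choose \emph{different} bases $\{\Theta_H\}$ and $\{M_K\}$ on the two sides; the matrix of a symmetric pairing is symmetric only when the same basis is used on both sides, so that conclusion is unavailable. More seriously, the Gram matrix (in the canonical bases $\{\theta_H\up^G\}$ and $\{\one\up^G_K\}$) is simply not upper triangular with respect to any refinement of subconjugacy, and your M\"obius change of basis does not obviously repair this. By the double-coset formula \eqref{doublecosetformula}, $\langle H,K\rangle_\perm = \sum_{g\in H\backslash G/K} v_p(C_{\theta_{H^g\cap K}}(\one_{H^g\cap K}))$, and each summand with $H^g\cap K$ non-cyclic $p$-hypo-elementary is strictly negative. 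So incomparable $H,K$ can pair nontrivially whenever they share a non-cyclic $p$-hypo-elementary intersection. Concretely, for $G=C_p\rtimes C_n$ with faithful action and $n$ admitting incomparable divisors $d_1,d_2$ with $\gcd(d_1,d_2)>1$ (e.g.\ $n=36$, $d_1=12$, $d_2=18$), the subgroups $C_p\rtimes C_{d_1}$ and $C_p\rtimes C_{d_2}$ are incomparable yet pair nontrivially. (A minor point: $p$-hypo-elementary groups are $P\rtimes C$ with the $p$-group $P$ normal, not $C\rtimes P$.)

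The paper's route is rather different and does not attempt triangularity. It first reduces via Lemma~\ref{permredtophypo} to the case where $G$ itself is $p$-hypo-elementary, so $G\cong C_{p^r}\rtimes C_n$ with $(p,n)=1$. It then computes every entry of the Gram matrix explicitly through a double-coset analysis (Lemma~\ref{doublecosets}), arriving at formula \eqref{permpairingformula}. After rescaling rows and columns, the matrix factors as $-Q(r)\otimes M(n,s)$, where $Q(r)_{i,j}=\min\{i,j\}$ and $M(n,s)$ is the ``shifted GCD matrix'' with $(d_1,d_2)$-entry $\gcd(d_1,d_2)-\gcd(d_1,d_2,s)$, indexed by divisors of $n$ not dividing $s=|\ker(C_n\to\Aut C_{p^r})|$. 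The substantive content is then the purely combinatorial Lemma~\ref{gcdmatinvertible}, which proves $\det M(n,s)=\prod_{d\in D(n,s)}\varphi(d)\neq 0$ by induction on the number of prime factors of $s$. This is the ``hard part'' you anticipate, but it is a determinant computation for a full (non-triangular) matrix, not a diagonal-entry check.
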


A formal consequence of Theorem \ref{intro1} is that the isomorphism class of a permutation module $M$ over $\z_p$ is determined by its regulator constants and the isomorphism class of $M\ot \q_p$. 

To show the theorem, we first reduce to $p$-hypo-elementary subgroups. All $p$-hypo-elementary groups with cyclic Sylow $p$-subgroups are of the form $C_{p^k}\rti C_n$ with $(p,n)=1$. In this case, we find we are able to completely explicate the matrix representing the pairing, and showing invertibility becomes a combinatorial problem (Lemma \ref{gcdmatinvertible}).

For general $G$, the permutation pairing may be degenerate. For example, when $p=3$, the group $C_3\ti C_3\ti S_3$ has a Brauer relation $\theta_\Sigma$ whose regulator constant is trivial on all permutation modules (see Section \ref{C3C3S3}). I do not know if there are other lattices for which $C_{\theta_\Sigma}(-)$ does not vanish.

We do however provide a partial result for arbitrary $G$. For any group $G$, there is a canonical Brauer relation with leading term $[G]$ called the \emph{Artin relation}, which we denote by $\theta_G$ (see Definition \ref{artinrelation}). Let $\one_G$ denote the trivial $\z_p[G]$-module.
\begin{thm}\label{intro2}
	For any finite group $G$ and prime $p$, we have $v_p(C_{\theta_G}(\one_G))\neq 0$.
\end{thm}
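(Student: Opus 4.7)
The plan is to compute $v_p(C_{\theta_G}(\one_G))$ directly from the defining formulas, and then verify its non-vanishing. The basic ingredients are the formula for the regulator constant of the trivial module and the explicit form of the Artin relation.

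First, I would establish that on the trivial lattice, the regulator constant is given by $|H|$-powers. Taking the canonical $G$-invariant pairing $\langle 1,1\rangle = 1$ on $\one_G \cong \z_p$, for every subgroup $H \le G$ the fixed subspace $(\one_G)^H = \z_p$ carries the Gram matrix $[1/|H|]$, so that
\[
C_\theta(\one_G) \equiv \prod_i |H_i|^{-n_i} \pmod{(\z_p\tii)^2}, \qquad v_p(C_\theta(\one_G)) = -\sum_i n_i\,v_p(|H_i|)
\]
for any Brauer relation $\theta = \sum_i n_i[G/H_i]$.

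Next, I would substitute the explicit Artin relation (Definition \ref{artinrelation}), which comes from Artin's induction theorem applied to $\one_G$: up to a clearing of denominators, $\theta_G = N\,[G/G] - \sum_{C\text{ cyclic}} a_C\,[G/C]$ where $a_C\in \z$ and $N\cdot\one_G = \sum_C a_C\operatorname{Ind}_C^G\one_C$ is Artin's integer formula. Then
\[
v_p(C_{\theta_G}(\one_G)) = -N\,v_p(|G|) + \sum_{C \text{ cyclic}} a_C\,v_p(|C|).
\]

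The key step is to show this expression is nonzero. The plan is to exploit the rank identity $N = \sum_C a_C\,[G:C]$ (equivalently $N/|G| = \sum_C a_C/|C|$), combined with the bound $v_p(|C|) \le v_p(|G|)$ for every cyclic $C \le G$, to produce a non-cancellation argument for the right-hand side. The contribution from the leading term $N[G/G]$ has large $p$-adic content, and the cyclic contributions are controlled by the Möbius-inversion formula that governs the Artin coefficients $a_C$.

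The main obstacle is ruling out accidental cancellation among the cyclic contributions, which is most delicate precisely when the $p$-Sylow subgroup of $G$ is cyclic (since then there can be cyclic $C$ with $v_p(|C|) = v_p(|G|)$). My plan to handle this is to reduce to the case of $p$-hypo-elementary $G$ via the compatibility of $\theta_G$ and of regulator constants with induction/restriction, because on $p$-hypo-elementary groups the cyclic subgroup lattice is simple enough that the required non-vanishing can be verified by direct inspection of the Artin formula.
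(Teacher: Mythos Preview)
Your setup is correct and matches the paper: formula \eqref{oneform} gives $v_p(C_{\theta_G}(\one_G)) = -v_p(|G|) + \sum_C \alpha_C\, v_p(|C|)$ with the Artin coefficients $\alpha_C$, and you correctly identify that the delicate case is when the Sylow $p$-subgroup is cyclic (when it is non-cyclic, the crude bound $v_p(|g|)\le v_p(|G|)-1$ suffices, exactly as in Remark~\ref{noncycpylow}). Your endgame for $p$-hypo-elementary $G$ is also fine: there the Artin relation is written down in Example~\ref{artinrelcyclic} and the non-vanishing is immediate (Example~\ref{phyporegconstcalc}).

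The genuine gap is the reduction step. The compatibilities you invoke give
\[
v_p(C_{\theta_H}(\one_H)) \;=\; v_p(C_{\theta_G\down_H}(\one_H)) \;=\; v_p(C_{\theta_G}(\one_H\up^G)),
\]
so knowing the $p$-hypo-elementary values forces $v_p(C_{\theta_G}(-))$ not to vanish \emph{identically} --- but that is a weaker statement than $v_p(C_{\theta_G}(\one_G))\neq 0$. If instead you feed the Conlon decomposition $\one_G=\sum_{H\in\hyp_p(G)}\alpha_H\,\one_H\up^G$ through the same identities, you obtain
\[
v_p(C_{\theta_G}(\one_G)) \;=\; \sum_{H\in\hyp_p(G)} \alpha_H\, v_p(C_{\theta_H}(\one_H)),
\]
and now the $\alpha_H$ have uncontrolled signs, so non-vanishing of each summand does not prevent cancellation. (Indeed the sum \emph{must} vanish whenever $G$ has no non-cyclic $p$-hypo-elementary subgroup, since then $\theta_G\in\BR_p(G)$; cf.\ Remark~\ref{relp}. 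So the statement as literally written in the introduction needs the hypothesis of Theorem~\ref{nonvanishing}.) The paper does \emph{not} reduce to $p$-hypo-elementary subgroups here: it stays with the original $G$, rewrites the Artin coefficients via Brauer's explicit formula (Lemma~\ref{expart}) to obtain the closed form of Lemma~\ref{regconstfirstform}, and then, for cyclic Sylow $p$-subgroup, computes the element-counting function $\mathcal P(G,k)$ exactly (Proposition~\ref{avgformula}) to arrive at $v_p(C_{\theta_G}(\one_G)) = -r\bigl(1-|Z_G(Q)|/|N_G(Q)|\bigr)$. That direct computation is what replaces your missing reduction.
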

The proof is group theoretic in nature and completely independent of that of Theorem \ref{intro1}.

The final aspect of the paper is an extended application of Theorem \ref{intro1}. Suppose now that $G$ has a cyclic Sylow $p$-subgroup $P$, and write $P_i\le P$ for the subgroup of order $p^i$. For a $\z_p[G]$-lattice $M$, Yakovlev \cite[Thm.\ 2.1]{Yak} showed that the diagram \vspace{-5pt}
\begin{figure}[H]
	\begin{tikzcd}[column sep=small]&&&&&
		H^{1}(P_r,M) \ar[shift left=0.6ex]{r}& H^{1}(P_{r-1},M)\ar[shift left=0.6ex]{r}\ar[shift left=0.6ex]{l} &\ar[shift left=0.6ex]{l} \quad ... \quad \ar[shift left=0.6ex]{r} &  H^{1}(P_{0},M)\ar[shift left=0.6ex]{l}&&&(\star)
	\end{tikzcd}
	\vspace{-10pt}
\end{figure}\noindent
determines the isomorphism class of $M$ up to summands which are \emph{trivial source}, i.e.\ summands of permutation modules. Here, the horizontal maps are restriction and corestriction, and each cohomology group is considered as an $N_G(P_i)$-module (cf.\ Section \ref{yakres}). Thus, $M$ would be completely determined if one could provide invariants which constrain the remaining trivial source summand. We refer to $(\star)$ as the \emph{Yakovlev diagram} of $M$.

Our main result gives conditions for when Theorem \ref{intro1} can be used to determine the remaining trivial source summand of $M$. Denote by $A(\zp[G],\triv)$ the subring of the representation ring $A(\zp[G])$ generated by trivial source lattices (see Definition \ref{trivsource}). Note that extension of scalars defines an inclusion $A(\zp[G])\hookrightarrow A(\z_p[G])$ \cite[Thm.\ 5.6 {\it iii)}]{ReinerSurvey} and so an isomorphism of the subrings generated by permutation modules $A(\zp[G],\perm)=A(\z_p[G],\perm)$. The condition is then that $A(\zp[G],\triv)=A(\z_p[G],\perm)$.
\begin{thm} \label{intromain} Let $G$ be a finite group and $p$ a prime such that $G$ has cyclic Sylow $p$-subgroups and such that $A(\z_p[G],\perm)=A(\zp[G],\triv)$. Given two rationally self-dual $\z_p[G]$-lattices $M,N$, then $M\cong N$ if and only if all the following conditions hold:
	\begin{enumerate}[i)]
		\item $M\ot \q_p\cong N \ot \q_p$,
		\item for an explicit finite list of characteristic zero Brauer relations, the corresponding regulator constants of $M,N$ are equal,
				\item $M,N$ have isomorphic Yakovlev diagrams.
	\end{enumerate}
\end{thm}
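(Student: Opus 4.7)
The plan is to combine Yakovlev's theorem (which controls $M$ and $N$ up to trivial source summands) with Theorem \ref{intro1} (which will control the remaining trivial source summands via their regulator constants), using conditions (i)--(iii) as the bridge. The ``only if'' direction is immediate, since each of the three quantities is an isomorphism invariant.

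For the ``if'' direction, I would first apply Yakovlev's theorem via condition (iii) to produce trivial source $\z_p[G]$-lattices $U, V$ with $M \oplus U \cong N \oplus V$; it will then suffice to show $U \cong V$. Condition (i), together with semisimplicity of $\q_p[G]$ and Krull--Schmidt cancellation, gives $U \ot \q_p \cong V \ot \q_p$. Condition (ii), together with multiplicativity of $C_\theta(-)$ on direct sums, yields $C_\theta(U) = C_\theta(V)$ for every $\theta$ in the explicit list, which I would take to be a finite spanning set of the finite-rank group $\br_0(G)$; linearity then extends the equality to every $\theta \in \BR_0(G)$.

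The heart of the argument is the application of Theorem \ref{intro1}. By the hypothesis $A(\z_p[G],\perm) = A(\zp[G],\triv)$, the classes $[U], [V]$ lie in $A(\z_p[G],\perm)$, and non-degeneracy of the permutation pairing forces $[U] - [V] \in A(\z_p[G],\cyc)$. Writing $[U] - [V] = \sum_H a_H [\z_p[G/H]]$ with $H$ ranging over conjugacy classes of cyclic subgroups and $a_H \in \q$, I would extend scalars to $\q_p$ and use $U \ot \q_p \cong V \ot \q_p$ to obtain $\sum_H a_H [\q_p[G/H]] = 0$ in the rational representation ring. By Artin's induction theorem, combined with a dimension count via injectivity of $R_\q(G) \hookrightarrow R_{\q_p}(G)$, the permutation classes $[\q_p[G/H]]$ for conjugacy classes of cyclic $H$ are $\q$-linearly independent, so every $a_H$ vanishes; hence $[U] = [V]$ in $A(\z_p[G],\triv)$.

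Finally, Krull--Schmidt for $\z_p[G]$-lattices makes $a(\z_p[G],\triv)$ free abelian on indecomposable trivial source lattices, so equality in $A(\z_p[G],\triv) = a(\z_p[G],\triv) \ot \q$ lifts to $U \cong V$. A second Krull--Schmidt cancellation in $M \oplus U \cong N \oplus U$ then yields $M \cong N$. The most delicate step I expect will be this final passage from a rational identity in the representation ring back to an actual isomorphism of lattices: it combines Artin's theorem (to kill the cyclic coefficients $a_H$) with Krull--Schmidt (to lift from $\q$-coefficients back to integer coefficients), and requires a careful identification of the various integral and local representation rings, as well as verifying that the ambient ``non-trivial-source part'' of $M$ and $N$ really cancels cleanly before Theorem \ref{intro1} is invoked.
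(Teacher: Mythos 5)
Your overall strategy mirrors the paper's: Yakovlev's theorem pins down the non-trivial-source part, and regulator constants plus extension of scalars should then pin down the trivial source part via non-degeneracy of the permutation pairing. The outer shell (produce $U,V$ with $M\oplus U\cong N\oplus V$, show $[U]-[V]$ has trivial extension of scalars and trivial regulator constants, conclude via Krull--Schmidt) is sound. However, there is a genuine gap at the pivotal step.

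You assert that, ``by the hypothesis $A(\z_p[G],\perm)=A(\zp[G],\triv)$, the classes $[U],[V]$ lie in $A(\z_p[G],\perm)$.'' This does not follow. The lattices $U,V$ are trivial source over $\z_p$, so $[U],[V]\in A(\z_p[G],\triv)$, but in general $A(\zp[G],\triv)\subsetneq A(\z_p[G],\triv)$: an indecomposable trivial source $\z_p[G]$-lattice need not have rational character and hence need not be defined over $\zp$. For instance, for $G=C_p\rtimes C_{p-1}$ with $p\ge 5$, the inflations of faithful characters of $C_{p-1}$ are indecomposable trivial source $\z_p[G]$-lattices lying outside $A(\zp[G])$ entirely (cf.\ Example \ref{vertex failure}). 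So the hypothesis, which only identifies the \emph{permutation} ring with the $\zp$\emph{-trivial-source} ring, does not directly apply to $[U]$ or $[V]$.

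The repair is close to what you have but requires a step you skipped. Work with the difference $[U]-[V]$ rather than with $[U],[V]$ individually. You already establish $U\ot\q_p\cong V\ot\q_p$, so $[U]-[V]$ has zero (hence rational) character; by \cite[Prop.\ 5.7]{ReinerSurvey} any element of $A(\z_p[G])$ with rational character lies in $A(\zp[G])$, so $[U]-[V]\in A(\z_p[G],\triv)\cap A(\zp[G])$. One must then prove that this intersection equals $A(\zp[G],\triv)$; this is not formal and is handled in the paper by noting that if $[W]-[V]$ is a difference of trivial source $\z_p$-lattices with rational character and $T$ is a permutation module with $V\oplus V'\cong T$, then $[W]-[V]=[W\oplus V']-[T]$ with $W\oplus V'$ having rational character, so that $[W\oplus V']$ descends to $\zp$. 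Only after this does the hypothesis $A(\zp[G],\triv)=A(\z_p[G],\perm)$ put $[U]-[V]$ into the permutation ring, where your Artin-independence argument and the non-degeneracy of the permutation pairing then apply. (Note also that $C_\theta(U)$ and $C_\theta(V)$ need not individually be defined since $U,V$ need not be rationally self-dual; one should work throughout with the rationally self-dual element $[U]-[V]=[N]-[M]$.)

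One further minor point: the paper phrases the argument in terms of $t=[M]-[M_\nt]-[M']$ and a further correction $t-M''$ to force rational character, rather than your $U,V$ bookkeeping, but this is a cosmetic difference; the mathematical content is the same once the gap above is closed.
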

This is stated precisely as Theorem \ref{main}. It is relatively straightforward to obtain extensions of this result to arbitrary lattices (cf.\ Remark \ref{arbitlat}). If the conditions of the theorem hold for all $p$-hypo-elementary subgroups of $G$, then they hold for $G$. We also provide some explicit criteria for the condition $A(\z_p[G],\perm)=A(\z_p[G],\triv)$ to be satisfied. Groups that satisfy the conditions include dihedral groups, abelian groups with cyclic Sylow $p$-subgroups and groups of order coprime to $(p-1)$.

 For some groups, such as dihedral groups $D_{2p}$ for primes $p\le 67$, the isomorphism class of a $\z[G]$-lattice $M$ is determined by its localisations at the primes dividing $|G|$. As a result, Theorem \ref{intromain} may be applied at each prime to give data which determines the isomorphism class of $M$ as a $\z[G]$-lattice (cf.\ Remark \ref{67}).
 
 It is possible to define regulator constants $C_\theta(M)$ of a $\z[G]$-lattice $M$. Then $C_\theta(M)$ is the product of the $p$-part of $C_\theta(M\ot \z_p)$ for all $p$ dividing $|G|$ (see Remark \ref{productp}). As a result, confining ourselves to $\z_p[G]$-lattices over $\z[G]$-lattices is innocuous. 
 
As part of the author's PhD thesis, we shall describe some applications of Theorem \ref{intromain} within number theory. For example, in the case of unit groups of number fields, when $p$ divides $|G|$ at most once, it is possible to reinterpret the three invariants of Theorem \ref{intromain} in terms of classical invariants of number fields  \cite[Ch.\ 3]{thesis}.

{\bf Outline:} In Section \ref{prelim}, we set out notation and recall necessary background results on Brauer relations and regulator constants. In Section \ref{paireg}, we outline precise questions on pairings arising from regulator constants. We also show that these reduce to considering $p$-hypo-elementary groups and that whenever the permutation pairing is non-degenerate, then permutation modules are determined by regulator constants and extension of scalars. In Section \ref{regconstinv}, we prove Theorem \ref{intro1}, and in Section \ref{nonvanartin}, we prove Theorem \ref{intro2}. In Section \ref{arblat}, we apply Theorem \ref{intro1} to prove Theorem \ref{intromain} on determining lattices up to isomorphism. There we also provide criteria for groups to satisfy the conditions of Theorem \ref{intromain}. Finally, in Section \ref{examples}, we provide examples and non-examples illustrating our results. 

After reading Sections \ref{prelim} and \ref{paireg}, the following Sections \ref{regconstinv} and \ref{nonvanartin} may be read completely separately from each other, as may Section \ref{arblat}, which only requires the statement of Theorem \ref{intro1}. 

\textbf{Funding:} The author was supported by a PhD studentship from the Engineering and Physical Sciences Research Council.

\textbf{Acknowledgements:} I'd like to thank Vladimir Dokchitser, David Loeffler and Chris Wuthrich for many helpful discussions and suggestions. I am also very grateful to be have had access to Magma functions written by Tim Dokchitser for calculating regulator constants. I am highly indebted to Henri Johnston for pointing out an earlier error and an anonymous referee for their very thorough reading and guidance as to how best to state the results. Finally, special thanks to Matthew Spencer and Alex Bartel without whom this paper would not have been possible.
\section{Preliminaries}\label{prelim}
\subsection{Notation}\label{trisou}
	Throughout, $G$ shall denote a finite group, $p$ a prime and $\R$ any ring, but we will be most concerned with $\R=\F_p,\zp,\z_p,\q$ or $\q_p$.
\begin{notat}\label{notat}
	We fix the following notation:
	\begin{itemize}
		\item Let $\one_G$ or $\one$ denote the trivial $\R[G]$-module. Where the choice of ring requires emphasis we write $\one_{\R,G}$.
		\item Given a subgroup $H\le G$ and an $\R[G]$-module $M$, we shall denote the restriction of $M$ to $H$ by $M\down ^G_H$ or $M\down_H$. Similarly, given an $\R[H]$-module $N$, we write $N\up^G_H$ or $N\up^G$ for the induction of $N$ to $G$.
		\item We say that an $\R[G]$-module is an \emph{$\R[G]$-lattice} if it is free of finite rank as an $\R$-module. Let $a(\R[G])$ denote the \emph{representation ring} of $G$. As an abelian group, $a(\R[G])$ consists of formal $\z$-linear combinations of isomorphism classes of $\R[G]$-lattices, subject to relations of the form $[M]+[N]=[M\op N]$. Here we use $[M]$, or just $M$, to denote the element of $a(\R[G])$ corresponding to an $\R[G]$-lattice $M$. The ring structure on $a(\R[G])$ is given by setting $[M]\cdot [N]=[M\ot_\R N]$. Induction defines a group homomorphism $\ind\colon a(\R[H])\to a(\R[G])$, whilst restriction defines a ring homomorphism $\res \colon a(\R[G])\to a(\R[H])$.
		\item Let $A(\R[G])$ denote $a(\R[G])\ot \q$. All our main results do not require the integral structure. As a result we frequently deal only with $A(\R[G])$ even though some intermediate results also hold integrally.
		\item Recall that a permutation module is a finite direct sum of modules of the form $\one \up^G_H$ as $H$ runs over subgroups of $G$. We denote the subgroup of $a(\R[G])$ spanned by such modules by $a(\R[G],\perm)$. The equality $\one\up^G_H\ot_\R \one \up^G_K = \one_H \up^G\down_K\up^G$ and Mackey's formula show that $a(\R[G],\perm)\subseteq a(\R[G])$ is a subring, which we call the \emph{permutation ring}. Both $\res,\ind$ restrict to maps of permutation rings, the former due to Mackey's formula. We set $A(\R[G],\perm)=a(\R[G],\perm)\ot\q$.
		\item Let $A(\R[G],\cyc)$ be the $\q$-subalgebra spanned by $\one \up^G_H$ as $H$ runs only over cyclic subgroups.
		\item Given a quotient $q\colon G\to G/N$ and an $\R[G/N]$-module $M$, we denote by $\inf^G_{G/N}(M)$ the inflation of $M$ to $G$. This defines a ring homomorphism $\inf \colon a(\R[G/N])\to a(\R[G])$, which restricts to a map of permutation rings since, for $H\le G/N$, $\inf^G_{G/N}(\one_H\up^{G/N})= \one \up^G_{q^{-1}(H)}$.
		\item In the same notation, given a $G$-module $M$, we define its deflation to $G/N$, $\defl^G_{G/N}M$, to be the fixed submodule $M^N$ with $G/N$-action. Restricting to permutation modules, $(\one \up^G_H)^N\cong \one \up^G_{NH} $, so that $\defl(\one\up^G_H)\cong \one \up^{G/N}_{q(H)}$. The composite $\defl \circ \inf$ is the identity on all of $a(\R[G/N])$.
				\item By $H\le_G G$, we denote a conjugacy class of subgroups of $G$ with representative $H$. When used in indices, the symbol $\le_G $ denotes indexing over conjugacy classes of subgroups. Thus, $\sum_{H\le_G G} 1$ is equal to the number of conjugacy classes of subgroups.
	\end{itemize}
\end{notat}
\begin{remark}\label{infrepring}
	Recall that a module is called \emph{indecomposable} if it can not be written as a direct sum of proper submodules. When $\R=\z_p$ or $\q$, every $\R[G]$-lattice admits a unique decomposition into direct sums of indecomposables \cite[Thm.\ 5.2]{ReinerSurvey}, so that $a(\R[G])$ is free as a $\z$-module with a basis given by isomorphism classes of indecomposable modules, but it need not have finite rank if $\R=\z_p$ and $p$ divides $|G|$ (see \cite[Sec.\ 33]{CR1}). Unique decomposition also ensures that for any two $\z_p[G]$-lattices $M,M'$, $[M]=[M']$ $\iff$ $M\cong M'$.
	
	If $\R =\z_{(p)}$, then extension of scalars defines an inclusion $a(\zp[G])\hookrightarrow a(\z_p[G])$ \cite[Thm.\ 5.6 {\it iii)}]{ReinerSurvey} so we may again detect a lattice's isomorphism class from its class in the representation ring. However, $\zp[G]$-lattices need not admit unique decomposition into indecomposables \cite{Beneish} and in general there is no obvious basis of $a(\zp[G])$. For simplicity, we shall often write $a(\zp[G])\subseteq a(\z_p[G])$.
\end{remark}

\subsection{Brauer relations}\label{brarel}

\begin{mydef}
A $G$-set is a set with a left action of $G$. We define the \emph{Burnside ring} $b(G)$ of $G$ to be the free abelian group on isomorphism classes of finite $G$-sets, quotiented by relations of the form $[X\coprod Y]-[X]-[Y]$ where $X,Y$ are any $G$-sets, and $[X], [Y]$ are the corresponding elements of the free group. The ring structure is then given by setting $[X]\cdot [Y]=[X\ti Y]$.

By decomposing $G$-sets into their orbits, we find that $b(G)$ is a free $\z$-module on isomorphism classes of transitive $G$-sets. Every transitive $G$-set is of the form $G/H$ for some subgroup $H\le G$, where $H$ is unique up to conjugacy. We denote the element of $b(G)$ corresponding to $G/H$ by $[H]$. Thus, $b(G)$ is free as a $\z$-module on the set of symbols $[H]$ for $H\le _G G$.

We write $B(G)$ for $b(G)\ot \q$.
\end{mydef}
\begin{constr}
	For any ring $\R$, a $G$-set $X$ canonically defines a permutation module and we obtain a surjective map
	\[ b_\R \colon b(G) \to a(\R[G],\perm) , \] 
	which sends $[H]$ to $\one_{\R,H}\up^G$.
\end{constr}
\begin{mydef}\label{brauerrel}
	A \emph{Brauer relation}, of a group $G$ over a ring $\R$, is an element of $\ker b_\R\subseteq b(G)$. We shall refer to the ideal $\ker b_\R$ as the space of Brauer relations over $\R$ and shall denote it by $\br_\R(G)$.
	
	When $\R=\q$ or $\F_p$, we call a Brauer relation over $\R$ a relation in characteristic zero or characteristic $p$, respectively, and denote $\br_\R(G)$ by $\br_0(G),\br_p(G)$ respectively. In the literature it is common to refer to a characteristic zero relation as simply a Brauer relation, and we shall often do the same.
\end{mydef}
\begin{example}\label{S3}
	If $G=S_3$, a characteristic zero Brauer relation is given by
	\[ \theta \colon [1]+2[G]-[C_3]-2[C_2].\]
	We shall see that in fact, $\br_0(S_3)=\theta\cdot \z$.
\end{example}
All characteristic $p$ relations are relations in characteristic zero also:
\begin{lemma}\label{BRp=Brzpapp} As subspaces of $b(G)$, $\br_p(G)=\br_{\zp}(G)=\br_{\z_p}(G)\subseteq \br_0(G)$.
\end{lemma}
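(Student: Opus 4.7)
The plan is to establish the chain $\br_p(G) \subseteq \br_{\z_p}(G) = \br_{\zp}(G) \subseteq \br_0(G)$ together with the reverse inclusion $\br_{\z_p}(G) \subseteq \br_p(G)$. For $\theta \in b(G)$, decompose $\theta = \theta^+ - \theta^-$ into non-negative parts; then $\theta \in \br_\R(G)$ is equivalent to an isomorphism $b_\R(\theta^+) \cong b_\R(\theta^-)$ of permutation $\R[G]$-modules. This uses Krull--Schmidt, which holds for $\R = \F_p, \z_p, \q$ directly, and for $\R = \zp$ after embedding into $a(\z_p[G])$ via Remark~\ref{infrepring}. Since induction commutes with scalar extension, $b_\R(\theta^\pm) \ot_\R \R' \cong b_{\R'}(\theta^\pm)$ along any ring map $\R \to \R'$.

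First I would prove $\br_{\z_p}(G) \subseteq \br_0(G)$. Extend an isomorphism $b_{\z_p}(\theta^+) \cong b_{\z_p}(\theta^-)$ to one over $\q_p[G]$. In characteristic zero semisimple modules are determined by their characters, and the character of any permutation module $\q[G/H]$ or $\q_p[G/H]$ at $g \in G$ is the fixed-point count $|(G/H)^{\la g\ra}|$, an integer independent of the coefficient field. Hence the characters of $b_\q(\theta^\pm)$ coincide, giving $b_\q(\theta^+) \cong b_\q(\theta^-)$ and $\theta \in \br_0(G)$. The equality $\br_{\z_p}(G) = \br_{\zp}(G)$ then follows at once from the injection $a(\zp[G]) \hookrightarrow a(\z_p[G])$ of Remark~\ref{infrepring}: the kernel of $b_{\zp}$ agrees with the kernel of its composition with this injection, which is $b_{\z_p}$.

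The heart of the lemma is the identification $\br_p(G) = \br_{\z_p}(G)$. The inclusion $\br_{\z_p}(G) \subseteq \br_p(G)$ comes from reducing an isomorphism modulo $p$. For the reverse, I would invoke the classical result on $p$-permutation (trivial-source) modules: reduction modulo $p$ induces a bijection between isomorphism classes of indecomposable $p$-permutation $\z_p[G]$-lattices and those of indecomposable $p$-permutation $\F_p[G]$-modules, with permutation modules being $p$-permutation in particular. Combined with Krull--Schmidt on both sides, an isomorphism $b_{\F_p}(\theta^+) \cong b_{\F_p}(\theta^-)$ may be reassembled termwise on indecomposable summands into an isomorphism $b_{\z_p}(\theta^+) \cong b_{\z_p}(\theta^-)$, giving $\theta \in \br_{\z_p}(G)$.

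The main obstacle is this final step, which relies on the non-trivial lifting property of $p$-permutation modules; without it, isomorphism after reduction modulo $p$ would only control composition factors, not isomorphism classes of lattices. The remaining inclusions are formal consequences of Krull--Schmidt and the invariance of permutation characters under scalar extension.
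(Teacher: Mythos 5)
Your proof is correct and rests on the same key technical input as the paper's: the bijection between indecomposable $p$-permutation $\z_p[G]$-lattices and indecomposable $p$-permutation $\F_p[G]$-modules under reduction mod $p$, which the paper invokes as the statement (\cite[3.11.4 \emph{i)}]{Benson95}) that $a(\z_p[G],\perm)\to a(\F_p[G],\perm)$ is an isomorphism, together with the injection $a(\zp[G])\hookrightarrow a(\z_p[G])$. The only minor deviation is that for $\br_{\z_p}(G)\subseteq\br_0(G)$ you take a detour through $\q_p$ and permutation characters, whereas the paper more directly observes that $b_\q$ factors through $b_{\zp}$ since $\q$ is the fraction field of $\zp$.
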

\begin{proof}
	Via the factorisation $\zp \to \z_p \to \F_p$, the map $b_{\F_p}$ factors as
	\[ b(G)\to a(\zp[G],\perm)\overset{M\mapsto M \ot \z_p}{\to} a(\z_p[G],\perm)\overset{M\mapsto M \ot \F_p}{\to}a(\F_p[G],\perm).\]
	The middle map is an isomorphism by \cite[Thm 5.6 \/iii)]{ReinerSurvey}, as is the last map by \cite[3.11.4 \/i)]{Benson95},
	so the kernels of $b_{\F_p},b_{\zp}$ and $b_{\z_p}$ agree. As $b_\q$ factors through $b_{\zp}$, there is an inclusion $\br_{\zp}(G)\subseteq \br_\q(G)$.
\end{proof}
\begin{notat} Let $G$ be a finite group and $H\le G$ a subgroup.
	\begin{itemize}
		\item Given an $H$-set $X$, we let $X\up^G_H$ denote the induced $G$-set $(G\ti X)/H$; here the $H$-equivalence is by acting on $G$ on the right and $X$ on the left, whilst $G$ acts on the resulting set via its left action on $G$. For transitive $G$-sets $(G/K)$ we have $(H/K)\up^G=(G/K)$ and we shall regularly abuse notation by writing $[K]\up^G$ simply as $[K]$, where now $K$ is thought of as a subgroup of $G$.\label{Gsetinduction}
		\item If $Y$ is a $G$-set, we let $Y\down^G_H$ denote its restriction to $H$. For a subgroup $K$ of $G$, making good use of the above abuse of notation, Mackey's formula for $G$-sets states that
\begin{equation}[K]\down^G_H=\hspace{-3pt} \sum_{g \in K\backslash G /H}\hspace{-5pt}[K^g \cap H].\label{mackeyforgsets}\end{equation}
\end{itemize}
{\noindent If now $N\unlhd G$ is a normal subgroup with quotient $q\colon G \to G/N$, then}
\begin{itemize}
		\item given a $G/N$-set $X$, we denote by $\inf ^G_{G/N} X$ the inflated set $X$, on which elements of $G$ act via their image in the quotient. For $H\le G/N$, $\inf^G_{G/N}([H])= [q^{-1}(H)]$,
		\item given a $G$-set $Y$, let $\defl^G_{G/N} Y$ denote its deflation, i.e.\ the set $Y^N$ with its action of $G/N$. For a transitive $G$-set $G/H$, the fixed points under $N$ is isomorphic to $G/NH$, which as a $G/N$-set is $(G/N)/q(H)$. In other words, $\defl^G_{G/N}([H])=[q(H)]$, and thus $\defl \circ \inf$ is the identity map.
	\end{itemize}
All of these operations induce group homomorphisms on Burnside rings, but only restriction and inflation will in general be ring homomorphisms. Each of $\ind,\res, \inf,\defl$ commute with $b_\R$. As a result, each restricts to morphisms of $\br_\R(-)$.
\end{notat}
\subsection{Relations in characteristic zero} \label{relin0}
Finding an explicit basis, for an arbitrary finite group $G$, of the $\br_0(G)$ is a hard problem, which was recently completed by Bartel-Dokchitser \cite{BDbrelclassification1,BDbrelclassification2}. On the other hand, in this section we recall that, a basis of the space $\br_0(G)\ot \q$ is provided by Artin's induction theorem.
\begin{notat}
	Let $\cyc(G):=\{H \le_G G \mid H \text{--cyclic}  \}$ denote a set of representatives of each conjugacy class of cyclic subgroups.
\end{notat}
\begin{thm}[Artin's induction theorem {\cite[Thm. 2.1.3]{SnaithExplicitBrauer}}]\label{ait}
	For any finite group $G$ and $\q[G]$-module $M$,  there exists a unique $\alpha_H \in \q$ for each cyclic $H\le_GG$ such that
	\[ M = \hspace{-3pt}\sum_{H\in \cyc(G)}\hspace{-5pt} \alpha_H\one_H \up^G\in A(\q[G]) . \]
\end{thm}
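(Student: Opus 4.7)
The plan is to show that $\{\one_H \up^G : H \in \cyc(G)\}$ is a $\q$-basis of $A(\q[G])$; existence and uniqueness of the $\alpha_H$ then follow immediately. The argument splits naturally into a dimension count and a linear independence argument, with spanning obtained for free by comparing dimensions.

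First I would observe that $A(\q[G]) = a(\q[G])\ot \q$ is a finite-dimensional $\q$-vector space with canonical basis given by the isomorphism classes of simple $\q[G]$-modules (using that $\q[G]$ is semisimple, so $a(\q[G])$ is a free $\z$-module of finite rank on the simples). By a classical theorem of Berman--Witt, the number of inequivalent simple $\q[G]$-modules equals the number of conjugacy classes of cyclic subgroups, i.e.\ $|\cyc(G)|$. Thus both sides of the proposed expansion live in a space of dimension $|\cyc(G)|$, and it suffices to prove that the $|\cyc(G)|$ elements $\one_H\up^G$ are linearly independent.

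For linear independence, I would use the character map $A(\q[G])\hookrightarrow \{f\colon G \to \q \text{ class function}\}$, which is injective because characters determine $\q[G]$-modules up to isomorphism. Recall the induced-character formula
\[ \chi_{\one_H \up^G}(g) = \tfrac{1}{|H|}\,\bigl|\{x \in G \, : \, x^{-1}gx \in H\}\bigr|. \]
Enumerate $\cyc(G) = \{H_1,\dots,H_r\}$ so that $|H_1| \le \cdots \le |H_r|$, and choose a generator $g_i$ of each $H_i$. If $|H_i| > |H_j|$, then no $G$-conjugate of $g_i$ can lie in the strictly smaller cyclic group $H_j$, so $\chi_{\one_{H_j}\up^G}(g_i)=0$; on the diagonal, $x=e$ contributes, giving $\chi_{\one_{H_i}\up^G}(g_i)>0$. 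Hence the matrix $\bigl(\chi_{\one_{H_j}\up^G}(g_i)\bigr)_{i,j}$ is triangular with nonzero diagonal, so its columns --- hence the characters themselves, hence the $\one_{H_j}\up^G$ --- are linearly independent.

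The main obstacle is the dimension count via Berman--Witt, which is genuinely non-elementary and lies outside the machinery developed in the excerpt; one may alternatively avoid it by establishing spanning directly through Möbius inversion on the poset of cyclic subgroups to cook up linear combinations of induced characters supported on single $\q$-conjugacy classes, but this is combinatorially heavier. The triangularity step, on the other hand, is an immediate consequence of the induced-character formula and requires no further input.
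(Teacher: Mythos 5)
The paper does not prove this theorem; it cites it directly to Snaith's book, so there is no in-house proof to compare against. That said, your proposal is a legitimate and fairly standard proof of Artin's induction theorem, taking the route of showing $\{\one_H\up^G : H\in\cyc(G)\}$ is a basis of $A(\q[G])$ via a dimension count (Berman--Witt) plus linear independence (triangularity of induced characters). This is more conceptual than the proof in Snaith, which works directly through Brauer's explicit formula (quoted later in the paper as Lemma~\ref{expart}) and verifies the identity by M\"{o}bius inversion; your route is shorter modulo the black-box Berman--Witt input.

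One step is underjustified and, as written, does not quite establish the claimed triangularity. You only argue that $\chi_{\one_{H_j}\up^G}(g_i)=0$ when $|H_i|>|H_j|$, but with the non-decreasing ordering $|H_1|\le\cdots\le|H_r|$ this leaves the entries with $|H_i|=|H_j|$, $i\ne j$ uncontrolled, so the matrix is a priori only block upper triangular. The gap closes easily: if $|H_i|=|H_j|$ and $x^{-1}g_ix\in H_j$, then $x^{-1}g_ix$ has order $|H_j|$, hence generates $H_j$, hence $x^{-1}H_ix=H_j$; but the $H_k$ are representatives of distinct conjugacy classes, so this forces $i=j$. With this added, the matrix is genuinely upper triangular (indeed, within each equal-order block it is diagonal) with strictly positive diagonal, and linear independence follows. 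You should also note that the character map $A(\q[G])\to\{\text{class functions}\}$ is injective only after tensoring with $\q$, which is exactly the setting here, so that step is fine.
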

\begin{mydef}
	We say that an element $\theta \in B(G)$ is supported at some set $S$ of conjugacy classes of subgroups of $G$ if the only $[H]$ with non-zero coefficients lie in $S$.
\end{mydef}
\begin{corol}\label{dimbrel0}
	For any finite group $G$,
	\begin{enumerate}[i)]
		\item the rank of $\br_0(G)$ is equal to the number of conjugacy classes of non-cyclic subgroups of $G$,
		\item there are no non-zero characteristic zero Brauer relations supported only at cyclic subgroups.
	\end{enumerate}
\end{corol}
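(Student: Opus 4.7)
The plan is to extract both statements directly from Artin's induction theorem by tensoring the defining sequence
\[ 0 \to \br_0(G) \to b(G) \xrightarrow{b_\q} a(\q[G],\perm) \to 0 \]
with $\q$. After tensoring, $B(G)$ has $\q$-basis $\{[H] : H \le_G G\}$ indexed by all conjugacy classes of subgroups, and the map becomes a surjection $B(G) \twoheadrightarrow A(\q[G],\perm)$ sending $[H] \mapsto \one_H\up^G$ with kernel $\BR_0(G)$.

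The key observation is that Theorem \ref{ait} gives two pieces of information simultaneously: the existence part says the elements $\{\one_H\up^G : H \in \cyc(G)\}$ span $A(\q[G],\perm)$ (since they span even the larger space $A(\q[G])$ and the permutation ring is generated by $\one_K\up^G$ for all $K$), and the uniqueness part says these cyclic inductions are $\q$-linearly independent. Thus $\{\one_H\up^G : H \in \cyc(G)\}$ is a $\q$-basis of $A(\q[G],\perm)$, so $\dim_\q A(\q[G],\perm) = |\cyc(G)|$.

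For \emph{i)}, rank-nullity then gives
\[ \dim_\q \BR_0(G) = \dim_\q B(G) - \dim_\q A(\q[G],\perm) = \#\{H \le_G G\} - \#\cyc(G), \]
which is precisely the number of conjugacy classes of non-cyclic subgroups. Since $\br_0(G)$ sits inside the finitely generated free abelian group $b(G)$ it is itself free, and its $\z$-rank equals its $\q$-dimension after tensoring; this yields the stated rank.

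For \emph{ii)}, suppose $\theta = \sum_{H \in \cyc(G)} \alpha_H [H]$ lies in $\br_0(G)$. Its image in $A(\q[G],\perm)$ is $\sum_{H \in \cyc(G)} \alpha_H \one_H\up^G = 0$, and linear independence of the cyclic inductions forces every $\alpha_H$ to vanish, so $\theta = 0$. There is no real obstacle here; the entire content of the corollary is repackaging Artin's theorem as a dimension statement, once one recognises that its uniqueness clause is exactly the linear independence needed to apply rank-nullity.
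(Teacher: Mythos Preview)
Your proof is correct and is exactly the argument the paper has in mind: the paper's own proof is the single word ``Immediate,'' meaning immediate from Artin's induction theorem, and you have simply spelled out that immediacy via rank--nullity and the linear independence of the cyclic inductions.
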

\begin{proof}
	Immediate.
\end{proof}
 Note that a group $G$ is cyclic if and only if it has no non-trivial Brauer relations.
\begin{mydef}
	For any ring $\R$ and finite group $G$, let $b_{\R,\q}$ denote the base change of $b_\R$,
	\[  b_{\R,\q} \colon B(G) \to A(\R[G],\perm). \] 
	We shall also call an element of the kernel of $b_{\R,\q}$ a Brauer relation over $\R$ and refer to the kernel $\BR_{\R}(G):=\br_\R(G)\ot\q$ as the space of Brauer relations over $\R$. Where there is ambiguity, we shall refer to elements of the kernel of $b_{\R}$ as \emph{integral Brauer relations} and of $b_{\R,\q}$ as \emph{rational Brauer relations}.
\end{mydef}
Induction theorems of the form of Theorem \ref{ait} always give rise to a corresponding family of (possibly rational) Brauer relations.
\begin{mydef}\label{artinrelation}
	For any group $G$, let 
	\[\one_G=\sum_{H \in \cyc(G)}\hspace{-5pt}\alpha_H \one _H\up^G,\]
	where the $\alpha_H\in \q$ are given uniquely by Artin's induction theorem. Then,
	\[\theta_G=[G]-\hspace{-5pt}\sum_{H \in \cyc(G)}\hspace{-5pt}\alpha_H [H] \in B(G) \]
	is a rational Brauer relation of $G$. We call $\theta_G$ the \emph{Artin relation} of $G$. Note that if $G$ is cyclic, then $\theta_G=0\in B(G)$, otherwise $\theta_G$ is non-zero and has $[G]$-coefficient $1$. The uniqueness statement of Artin's induction theorem shows that $\theta_G$ is, up to scaling,  the unique element of $\BR_0(G)$ supported only at $G$ and cyclic subgroups.
\end{mydef} 
 The following example will be returned to in Section \ref{proofofndegcyc}.
\begin{example}\label{artinrelcyclic} Let $G$ be of the form $C_{p^r}\rti C_n$, with $p\nmid n$, and denote by $S\le C_n$ the kernel of the action $C_n \to \Aut(C_{p^r})$. Writing $s$ for $|S|$, we claim that
\[\theta_G = \frac{s}{n} \cdot[S]- [C_n]-\frac{s}{n}\cdot [C_{p^r}\ti S] + [C_{p^r}\rti C_n] ,\]
which can be checked by direct calculation. If the action of $C_n$ is not faithful, then $S$ is a non-trivial subgroup of $G$ and quotienting by $S$ results in a group of the same form but with faithful action. The Artin relation of $G$ is then the inflation of the Artin relation of $G/S$ (using that the preimage of a cyclic subgroup of $G/S$ is a cyclic subgroup of $G$).
\end{example}
	Following Notation \ref{Gsetinduction}, when it is contextually clear we are referring to $G$-relations, for a subgroup $H\le G$, we shall denote the $G$ relation $\theta_H \up^G$ simply by $\theta_H$. Artin relations are well behaved under restriction:
 \begin{lemma}\label{mackeyforartrel}
	Let $G$ be a finite group and $H,K$ subgroups. Then,
	\begin{enumerate}[i)]
		\item the restriction of the Artin relation of $G$ to $H$ is the Artin relation of $H$, i.e.\
		\[\theta_G\down_H= \theta_H,\]
		\item more generally
		\begin{equation}
		\theta_H\up^G_H\down_K=\sum_{g\in H\backslash G/K}\hspace{-5pt}\theta_{H^g\cap K}\up^K. \label{mackeyforartreleq}
		\end{equation}  
	\end{enumerate}  
\end{lemma}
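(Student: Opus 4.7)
The plan is to prove (i) using the uniqueness characterization of the Artin relation built into Definition \ref{artinrelation}, namely that $\theta_G$ is the unique element of $\BR_0(G)$ supported on $\{[G]\}\cup \cyc(G)$ with $[G]$-coefficient $1$ (and $\theta_G=0$ when $G$ is cyclic). Part (ii) is then obtained by matching up the two sides term by term and applying Corollary \ref{dimbrel0} (ii) to kill the difference.

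For (i), I would expand $\theta_G\down_H$ linearly using Mackey's formula \eqref{mackeyforgsets}. The leading term gives $[G]\down_H=[H]$, while each $[L]\down_H$ with $L\in \cyc(G)$ decomposes as $\sum_{g\in L\ba G/H}[L^g\cap H]$; every $L^g\cap H$ is cyclic because it is a subgroup of $L^g$. Thus $\theta_G\down_H$ is supported on $\{[H]\}\cup \cyc(H)$, and since $\res$ commutes with $b_\q$ it lies in $\BR_0(H)$. If $H$ is non-cyclic, then no $L^g\cap H$ can equal $H$, so the $[H]$-coefficient is exactly $1$ and uniqueness forces $\theta_G\down_H=\theta_H$. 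If $H$ is cyclic, Corollary \ref{dimbrel0} (ii) forces the whole restriction to vanish, matching $\theta_H=0$.

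For (ii), write $\theta_H=[H]-c_H$ with $c_H:=\sum_{L\in\cyc(H)}\alpha_L[L]\in B(H)$, and analogously $\theta_{H^g\cap K}=[H^g\cap K]-c_{H^g\cap K}$. By Mackey, $[H]\up^G\down_K=\sum_{g\in H\ba G/K}[H^g\cap K]\up^K$, which already matches the leading terms on the right-hand side of \eqref{mackeyforartreleq}. The identity therefore reduces to showing
\[
 c_H\up^G\down_K \;=\; \sum_{g\in H\ba G/K} c_{H^g\cap K}\up^K \quad\text{in } B(K).
\]
Both sides are supported on cyclic subgroups of $K$, since subgroups of cyclic groups are cyclic. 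Moreover, both sides have the same image under $b_{\q,K}$: on the left, $b_\q(c_H)=\one_H$ in $A(\q[H],\perm)$ by definition of $\theta_H$, so inducing and restricting yields $\one_H\up^G\down_K$; on the right, Mackey for modules gives $\one_H\up^G\down_K\cong\bigoplus_g\one_{H^g\cap K}\up^K$, and each $\one_{H^g\cap K}$ equals $b_\q(c_{H^g\cap K})$. Hence their difference lies in $\BR_0(K)$ and is supported on cyclic subgroups, so vanishes by Corollary \ref{dimbrel0} (ii).

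The real content is just Corollary \ref{dimbrel0} (ii) — that there are no non-zero rational Brauer relations supported on cyclic subgroups — which forces the cyclic ``tails'' of the two decompositions to coincide once the leading terms match. The main obstacle is purely bookkeeping: keeping track of the double cosets and conjugates correctly when iterating Mackey's formula; nothing deeper is required. (Alternatively, (i) can be recovered from (ii) by taking $K=H$ and $H=G$, since then $G\ba G/H$ is a single double coset and $G^g\cap H=H$.)
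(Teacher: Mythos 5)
Your proof is correct and follows essentially the same route as the paper's: apply Mackey's formula to match the terms at non-cyclic subgroups, then invoke Corollary \ref{dimbrel0}~\emph{ii)} (no non-zero rational relations supported on cyclic subgroups) to force the two sides to agree. The paper's version is slightly more compact — it treats only \emph{ii)} directly and obtains \emph{i)} as the special case $K=H$, $H=G$, rather than giving \emph{i)} its own argument via the uniqueness characterization of $\theta_H$ — but the key lemma and mechanism are identical.
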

\begin{proof}
	We prove {\it ii)}. Mackey's formula \eqref{mackeyforgsets} states that
	\[  [H]\down_K=\sum_{g \in H \backslash G/K}\hspace{-5pt} [H^g\cap K].  \]
	Also by Mackey, for any cyclic group $L\le H$, $[L]\up^G_H\down_K$ is supported at cyclic subgroups. But then $\theta\up^G_H\down_K$ and $\sum_{g\in H\backslash G/K}\theta_{H^g\cap K}\up^K$ are two relations whose coefficients agree at all non-cyclic subgroups and since there are no relations supported at cyclic subgroups (Corollary \ref{dimbrel0} {\it ii)}), they must therefore be equal.
\end{proof} 
\begin{lemma}\label{artinbasisofbrauerrel}
	A basis of the space of rational Brauer relations $\BR_0(G)_\q$ is given by the set $\{\theta_H\}$ of Artin relations for non-cyclic $H\le_G G$.
\end{lemma}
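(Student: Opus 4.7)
The plan is to produce the basis by combining the dimension count of Corollary~\ref{dimbrel0}~\emph{i)} with a triangularity argument at the non-cyclic conjugacy classes.

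First I note that for each non-cyclic $H\le_G G$, the element $\theta_H\up^G$ lies in $\BR_0(G)_\q$: by definition $\theta_H \in \BR_0(H)_\q$, and induction $b(H)\to b(G)$ intertwines with $b_\q$ (as recorded at the end of the notation on induction/restriction/inflation/deflation of Burnside rings), so it carries rational Brauer relations to rational Brauer relations. Next I examine the support of $\theta_H\up^G$. By Definition~\ref{artinrelation} the relation $\theta_H$ is supported in $B(H)$ only on $[H]$ (with coefficient $1$) and on classes $[C]$ of cyclic subgroups $C\le H$. Since $[H]\up^G=[H]$ (as conjugacy class in $G$) and every cyclic subgroup of $H$ remains cyclic as a subgroup of $G$, the induced relation $\theta_H\up^G$ is supported on $[H]$ and on conjugacy classes of cyclic subgroups of $G$, with $[H]$-coefficient equal to $1$.

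From this the linear independence is immediate. Given any finite collection $H_1,\dots,H_k$ of non-cyclic subgroups taken from distinct $G$-conjugacy classes and a relation $\sum_i \alpha_i\,\theta_{H_i}\up^G=0$, projecting onto the $[H_j]$-coordinate (which is disjoint from the cyclic support of all the other $\theta_{H_i}\up^G$ and has coefficient $1$ in $\theta_{H_j}\up^G$) yields $\alpha_j=0$ for each $j$. Hence the set $\{\theta_H\up^G\colon H\le_G G,\ H\text{ non-cyclic}\}$ is linearly independent in $\BR_0(G)_\q$.

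Finally, Corollary~\ref{dimbrel0}~\emph{i)} gives $\dim_\q \BR_0(G)_\q$ equal to the number of conjugacy classes of non-cyclic subgroups of $G$, which matches the cardinality of the proposed spanning set. A linearly independent set of the right size in a finite-dimensional vector space is a basis, completing the proof. There is no serious obstacle here; the only point that requires any care is the observation that induction from $H$ to $G$ does not introduce any new non-cyclic conjugacy classes into the support of $\theta_H$, which is clear once one recalls that $\theta_H$ is supported on $[H]$ and cyclic classes of $H$.
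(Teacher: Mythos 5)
Your proof is correct and follows essentially the same route as the paper's: both arguments rest on the observation that $\theta_H\up^G$ has $[H]$-coefficient $1$ and is otherwise supported only on cyclic conjugacy classes, giving linear independence by the triangularity at non-cyclic classes, and both then invoke the dimension count from Corollary~\ref{dimbrel0}~\emph{i)}. Your version merely spells out the intermediate step (that induction does not move the support off $[H]$ and cyclic classes) that the paper leaves implicit.
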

\begin{proof}
The $\theta_H$ are linearly independent as each is zero on non-cyclic subgroups other than $[H]$ and must span by Corollary \ref{dimbrel0} {\it i)}.
\end{proof}
\subsection{Relations in characteristic $p$}\label{relinp}

\begin{mydef}\label{phypo}
	Let $p$ be prime. A finite group $G$ is called \emph{$p$-hypo-elementary}, or simply $p$-hypo, if $G$ has a normal Sylow $p$-subgroup $P$ and $G/P$ is cyclic, i.e.\ if $G$ can be written in the form $P\rti C_n$ for $P$ a $p$-group and $(p,n)=1$.
\end{mydef}
\begin{notat}
	We denote a set $\{H\le_G G \mid  H \text{ $p$-hypo}\}$ of representatives of the conjugacy classes of $p$-hypo-elementary subgroups by $\hyp_p(G)$. Similarly, let $\nchyp_p(G):=\{H\le_G G \mid  H \text{ is $p$-hypo and non-cyclic}  \}$.
\end{notat}
Recall that characteristic $p$ relations are coincide with $\zp$ and $\z_p$-relations (Lemma \ref{BRp=Brzpapp}). Analogously to Corollary \ref{dimbrel0} we have
\begin{thm}\label{dimbrelp}
	For any finite group $G$,
	\begin{enumerate}[i)]
		\item a basis of $A(\z_p[G],\perm)$ is given by $\{\one \up^G_H\}_{H \in \hyp_p(G)}$,
		\item the rank of $\br_p(G)$ is equal to the number of conjugacy classes of non-$p$-hypo-elementary subgroups of $G$,
		\item there are no non-zero characteristic $p$ Brauer relations supported only at $p$-hypo-elementary subgroups.
	\end{enumerate}
\end{thm}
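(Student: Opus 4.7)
The strategy is to parallel the proof of Corollary \ref{dimbrel0}, replacing Artin's induction theorem with its modular analogue, Conlon's induction theorem. Conlon's theorem asserts that for every $\F_p[G]$-module $M$ there exist unique rationals $\alpha_H \in \q$, indexed by $H \in \hyp_p(G)$, such that $[M] = \sum_H \alpha_H \one_H\up^G$ in $A(\F_p[G])$. This is exactly the ingredient that allowed us to read off both the rank of $\br_0(G)$ and the impossibility of cyclic-supported relations in the characteristic zero case.

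First I would reduce to the case $\R = \F_p$. The chain of isomorphisms $a(\zp[G],\perm) \overset{\sim}{\to} a(\z_p[G],\perm) \overset{\sim}{\to} a(\F_p[G],\perm)$ already appearing in the proof of Lemma \ref{BRp=Brzpapp} is compatible with the maps $b_\R$, so after tensoring with $\q$ each of the three statements for $\z_p$ is equivalent to its counterpart for $\F_p$.

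Part (i) then follows from Conlon's theorem. Spanning of $A(\F_p[G],\perm)$ by $\{\one\up^G_H\}_{H \in \hyp_p(G)}$ is immediate, since each generator $\one\up^G_K$ already lies in the $\q$-span inside the larger ring $A(\F_p[G])$, and thus so does any permutation module. Linear independence follows from the uniqueness clause in Conlon's theorem: a non-trivial relation $\sum_{H \in \hyp_p(G)} n_H \one\up^G_H = 0$ would give two distinct expressions for $0$ as such a combination.

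Parts (ii) and (iii) are then formal. For (iii): a characteristic $p$ Brauer relation $\theta$ supported only on $p$-hypo-elementary subgroups maps under $b_{\F_p,\q}$ to a vanishing $\q$-combination of $\{\one\up^G_H\}_{H \in \hyp_p(G)}$, and by the linear independence in (i) every coefficient of $\theta$ must vanish, forcing $\theta=0$. For (ii): applying rank–nullity to the short exact sequence $0 \to \BR_p(G) \to B(G) \overset{b_{\F_p,\q}}{\longrightarrow} A(\F_p[G],\perm) \to 0$ and combining with (i) yields that $\dim \BR_p(G)$ equals the total number of conjugacy classes of subgroups minus $|\hyp_p(G)|$, i.e.\ the number of non-$p$-hypo-elementary conjugacy classes. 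The only non-formal input is Conlon's theorem itself; the obstacle is simply invoking it correctly, after which everything else follows the template of Corollary \ref{dimbrel0}.
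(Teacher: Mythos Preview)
Your overall architecture matches the paper's: part (i) is the only non-formal statement, and (ii), (iii) follow from it exactly as you describe. The paper likewise says ``Given (i), both (ii) and (iii) are automatic'' and defers (i) to a later result (Theorem~\ref{basisphypo}).

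However, the version of Conlon's theorem you invoke is false. It is not true that \emph{every} $\F_p[G]$-module lies in the $\q$-span of $\{\one_H\up^G\}_{H\in\hyp_p(G)}$ inside $A(\F_p[G])$: already for $G=C_p$ there are $p$ indecomposable $\F_p[C_p]$-modules but only two $p$-hypo-elementary subgroups, so the Jordan block of length~$2$ cannot be such a combination. The correct statement of Conlon's induction theorem only concerns trivial source (equivalently $p$-permutation) modules. This does not damage your spanning argument, since the permutation modules you care about are trivial source; but it does undermine your appeal to a ``uniqueness clause''.

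More seriously, even for the trivial source ring, Conlon's theorem in its standard form does \emph{not} come packaged with the uniqueness statement you need. What Conlon's theorem provides is that the species $t_{(P,g)}$ jointly separate trivial source modules. From this one still has to \emph{deduce} that the permutation modules $\one_H\up^G$ for $H\in\hyp_p(G)$ are linearly independent, and this is precisely what the paper does in Theorem~\ref{basisphypo}: by Lemma~\ref{fixedpts}, $t_{(P,g)}(\one_K\up^G)=\#(G/K)^{\langle P,g\rangle}$, which vanishes unless some conjugate of $K$ contains $\langle P,g\rangle$ and is nonzero when $K=\langle P,g\rangle$; ordering by size gives a triangular system. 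The upper bound on $\dim A(\z_p[G],\perm)$ comes from observing that these fixed-point counts depend only on the conjugacy class of the $p$-hypo-elementary group $\langle P,g\rangle$, not on the pair $(P,g)$ itself. So the ``uniqueness'' you cite as an input is in fact the output of the paper's argument, and your proof of (i) is circular as written.
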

\begin{proof}
	The first statement is a consequence of Conlon's induction theorem as we show later in Theorem \ref{basisphypo}. Given {\it i)}, both {\it ii)} and {\it iii)} are automatic.
\end{proof}
The first part also holds with $\z_p$ replaced by $\F_p$ or $\zp$.

Note that there are no non-zero characteristic $p$ Brauer relations for $p$-hypo-elementary groups, and this is only true of such groups. As before, the theorem gives rise to privileged relations in characteristic $p$:

\begin{mydef}For any group $G$ and prime $p$, write $\one_{\z_p, G}=\sum_{H \in \hyp_p(G)}\alpha_H \one_{\z_p,H}\up^G_H$ with $\alpha_H\in \q$ uniquely by Theorem \ref{dimbrelp}. Since $\BR_{\z_p}(G)=\BR_p(G)$ (Lemma \ref{BRp=Brzpapp}), 
	\[\theta_{\text{Con,$G$}} = [G]-\hspace{-5pt}\sum_{H \in \hyp_p(G)} \hspace{-5pt}\alpha_H[H] \]
	is a rational Brauer relation in characteristic $p$, which we refer to as the \emph{Conlon relation} of $G$. Note, $\theta_{\text{Con,$G$}}$ is identically zero if and only if $G$ is $p$-hypo-elementary. The Conlon relation is the unique $p$-relation supported only at $G$ and $p$-hypo-elementary subgroups. However, the Conlon relation need not be unique amongst characteristic zero relations supported at these groups.
\end{mydef}
As before, when it is clear that we are referring to $G$-relations, for a subgroup $H\le G$, we denote $\theta_{\text{Con,$H$}}\up^G$ simply by $\theta_{\text{Con,$H$}}$. All characteristic $p$ relations are rational linear combinations of Conlon relations:
\begin{lemma}\label{quotbasis}
	Let $G$ be a finite group and $p$ a prime. Then,
	\begin{enumerate}[i)]
		\item a basis of $\BR_p(G)$ is formed by the set $\{ \theta_{\text{Con,H}}\}$ as $H$ runs over conjugacy classes of non-$p$-hypo-elementary groups,
		\item this can be extended to a basis of $\BR_0(G)$ by adding the Artin relations $\theta_H$ as $H$ runs over conjugacy classes of non-cyclic $p$-hypo-elementary groups.
	\end{enumerate}
\end{lemma}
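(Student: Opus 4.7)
The plan is to deduce both parts from the dimension formulas of Theorem \ref{dimbrelp} and Corollary \ref{dimbrel0}, combined with a triangularity argument on the coefficient matrix of the proposed bases, exploiting the support properties of Conlon and Artin relations.

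For part (i), first I would note that the number of Conlon relations $\theta_{\text{Con,$H$}}$, one for each non-$p$-hypo-elementary conjugacy class $H\le_G G$, matches $\dim_\q \BR_p(G)$ by Theorem \ref{dimbrelp}(ii). Each $\theta_{\text{Con,$H$}}$ is by construction a characteristic $p$ relation of $H$, and remains one after inducing to $G$ since $\ind$ commutes with $b_\R$. The key support observation is that $\theta_{\text{Con,$H$}}$ is supported at $[H]$ with coefficient $1$, together with $[K]$'s for $p$-hypo-elementary subgroups $K\le H$; since any such $K$ is also $p$-hypo-elementary as a subgroup of $G$, the coefficient at a non-$p$-hypo-elementary class $[L]$ of $\theta_{\text{Con,$H$}}\up^G$ must vanish unless $L\sim_G H$. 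In any ordering of the non-$p$-hypo-elementary conjugacy classes, the resulting coefficient matrix is then the identity, so these relations are linearly independent; by the dimension count, they form a basis.

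For part (ii), I would observe that $\dim_\q \BR_0(G)$ equals the number of non-cyclic conjugacy classes by Corollary \ref{dimbrel0}(i), which decomposes as the non-$p$-hypo-elementary classes plus the non-cyclic $p$-hypo-elementary ones. Each Artin relation $\theta_H$ with $H$ non-cyclic $p$-hypo-elementary lies in $\BR_0(G)$, has $[H]$-coefficient $1$, and is supported only at $[H]$ and at cyclic subgroups. Indexing rows (our relations) and columns (non-cyclic conjugacy classes) with the non-$p$-hypo-elementary classes first and the non-cyclic $p$-hypo-elementary ones second, the coefficient matrix acquires the block form
\[ \begin{pmatrix} I & * \\ 0 & I \end{pmatrix}, \]
where the bottom-left block vanishes because $\theta_H\up^G$ (for $H$ non-cyclic $p$-hypo-elementary) is supported at cyclic classes and at $[H]$, none of which is non-$p$-hypo-elementary, and the two diagonal blocks are identities by the support analyses above. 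Unipotence gives linear independence, and the dimension count upgrades this to a basis.

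I expect no real obstacle: once the support of $\theta_{\text{Con,$H$}}\up^G$ and $\theta_H\up^G$ is read off from their definitions, the proof reduces to combinatorial bookkeeping. The only subtlety worth double-checking is that a $p$-hypo-elementary subgroup of $H$ is automatically $p$-hypo-elementary as a subgroup of $G$, which is why the off-diagonal entries one might fear actually vanish.
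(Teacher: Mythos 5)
Your proof is correct and follows essentially the same approach as the paper: the paper's argument (via Lemma \ref{artinbasisofbrauerrel}) also establishes linear independence by reading off the support of the Conlon and Artin relations at, respectively, non-$p$-hypo-elementary and non-cyclic $p$-hypo-elementary classes, and then concludes via the dimension counts in Corollary \ref{dimbrel0} and Theorem \ref{dimbrelp}. Your block upper-triangular formulation is simply a slightly more explicit way of packaging the same triangularity observation.
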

\begin{proof}
	The proof of {\it i)} is as in Lemma \ref{artinbasisofbrauerrel}. For {\it ii)}, in addition use Corollary \ref{dimbrel0}, Theorem \ref{dimbrelp}.
\end{proof}
\begin{example}\label{tsD2p} Let $G=D_{2p}=C_p\rti C_2$ be the dihedral group of order $2p$ for $p$ an odd prime. If $\ell$ is any prime, then
	\begin{equation} \{H\le_GG \ |\ H\text{ is $\ell$-hypo-elementary}\}=\begin{cases}\{1\},C_2,C_p & \ell \neq p\\ \{1\},C_2,C_p, D_{2p} & \ell = p
	\end{cases},\label{phypolist}  \end{equation}
	and so $\dim_\q A(\z_{\ell}[G],\perm)$ is $3$ unless $\ell=p$ when it is $4$. A basis $S$ of $A(\z_p[G],\perm)$ is formed by
	\[ S=\begin{cases}\one_{\{1\}}\up^G,\one_{C_2}\up^G,\one_{C_p}\up^G & \ell \neq p\\ \one_{\{1\}}\up^G,\one_{C_2}\up^G,\one_{C_p}\up^G, \one_G & \ell = p
	\end{cases}.\]	
\label{S3p} Since $G$ has up to conjugacy four subgroups, of which three are cyclic, $\rk \BR_0(G)=1$. Let $\theta \in \br_0(G)$ be the relation
	\[\theta \colon 2[G]-[C_p]-2[C_2]+[1].\]
	Then $\theta=2\theta_G$. As $\theta$ is indivisible as an element of $b(G)$, we find $\br_0(G)=\theta \cdot\z$.
	
Since $\rk \br_\ell(G)$ is the number of conjugacy classes of non-$p$-hypo-elementary subgroups, by \eqref{phypolist}, $\rk\br_\ell(G)$ is also one unless $\ell= p$ when it is zero. Given that $\br_\ell(G)\subseteq \br_0(G)$ (Lemma \ref{BRp=Brzpapp}), we find
	\[ \br_\ell(G)=\begin{cases} \theta \cdot \z & \text{if } \ell \neq p\\
	0 & \text{if } \ell=p\end{cases}.\]
	The Conlon relation $\theta_{\text{Con}, G}$ is equal to the Artin relation unless $\ell=p$ when it is zero.
\end{example}

\subsection{Regulator constants}\label{regcon}
In this section, we recall how to associate to a characteristic zero Brauer relation, a function on (nice) $\R[G]$-lattices called its regulator constant.

We follow the construction given in \cite{DDRegConstParity} for an arbitrary PID $\mathcal{R}$ of characteristic zero. We are only ever concerned with $\mathcal{R}=\z,\zp$ or $\z_p$. Let $\mathcal{K}$ denote the field of fractions of $\mathcal{R}$. 
\begin{mydef}\label{selfdual}
An $\R[G]$-lattice $M$ is called \emph{rationally self-dual} if $M\ot \mathcal{K}$ is self-dual, i.e.\ $M\ot \mathcal{K}$ is isomorphic to its linear dual $\Hom_\mathcal{K}(M\ot \mathcal{K}, \mathcal{K})$ as $\mathcal{K}[G]$-modules. This is equivalent to the existence of a non-degenerate $G$-invariant inner product on $M\ot \mathcal{K}$. If an inner product on $M\ot \mathcal{K}$ exists, there is a restricted $G$-invariant inner product on $M$.

Rational self-duality is preserved under induction, restriction, inflation and deflation, as well as taking tensor products of two rationally self-dual lattices. We say that an element of $A(\R[G])$ is \emph{rationally self-dual} if it can be written as a linear combination of self-dual lattices. We denote the subring of self-dual lattices by $a(\R[G],\sd)$ and define $A(\R[G],\sd)$ accordingly.
\end{mydef}
A rationally self-dual lattice $M$ need not be linearly self-dual, i.e.\ a rationally self-dual $M$ need not be isomorphic to $\Hom(M,\R)$. If $\R=\z$ or $\zp$, then, as all $\q[G]$-modules are self-dual, all $\R[G]$-lattices are rationally self-dual.
\begin{mydef}\label{regdef}
Let $G$ be a finite group and $\theta= \sum_{i} [H_i]-\sum_{j} [H'_j]\in \br_0(G)$ be an integral characteristic zero Brauer relation of $G$. Given a rationally self-dual $\R[G]$-lattice $M$, fix a choice of non-degenerate $G$-invariant inner product $\la \ \, , \ \ra$ on $M$. The \emph{regulator constant} of $\theta$ evaluated at $M$ is then
\[ C_{\theta}(M)=\frac{\prod_i\det\left( \frac{1}{|H_i|}\la \ \, , \ \ra |_{M^{H_i}} \right)}{\prod_j\det\left( \frac{1}{|H'_j|}\la \ \, , \ \ra |_{M^{H'_j}}\right)} \in \mathcal{K}\tii/(\mathcal{R}\tii)^2. \]
This is independent of the choice of $\la \ \, , \ \ra$ as an element of $\mathcal{K}\tii/(\mathcal{R}\tii)^2$ (see \cite[Thm.\ 2.17]{DDRegConstParity}). For $M$ a $\z[G]$ or $\zp[G]$-lattice, we may take the pairing to be positive definite and so for all characteristic zero Brauer relations $\theta$ and modules $M$ we have $C_\theta(M)>0$.\label{no-1}
\end{mydef}
\begin{remark}\label{lookattsmodulessuggestion}
When evaluating regulator constants at the trivial module, the formula simplifies. For example, if $\theta=\sum_i [H_i]-\sum_j [H'_j]$, then 
\begin{equation}C_\theta(\one _G)= \frac{\prod_i \frac{1}{|H_i|}}{\prod_j \frac{1}{|H'_j|}}=\frac{\prod_j|H'_j|}{\prod_i{|H_i|}}.  \label{oneform}
\end{equation}
This formula can be extended to permutation modules due to the formalism of regulator constants provided by the next lemma. That regulator constants of permutation modules can be made explicit in this way is crucial in the proof of Theorem \ref{tspairingnondeg}.
\end{remark}
\pagebreak[3]
\begin{lemma}[{\rm \cite{DDRegConstParity}}]\label{regcontoolkit} Let $G$ be any finite group and $H$ a subgroup and let $\mathcal{R},\mathcal{K}$ be as above. Assume throughout that all modules are rationally self-dual. Then,
\begin{enumerate}[i)] \label{regconstmult}\label{inductregconst}\label{inductmod}
	\item \label{multinmod} if $M,N$ are two $\R[G]$-lattices, then for any Brauer relation $\theta$ of $G$, $C_\theta(M\oplus N)=C_\theta(M)C_\theta(N)$,
	\item if $\theta,\theta'$ are two Brauer relations for $G$ and $M$ any $\mathcal{R}[G]$-lattice, then \linebreak $C_{(\theta+\theta')}(M)=C_{\theta}(M)C_{\theta'}(M)$, 
	\item if $M$ is a $\mathcal{R}[G]$-lattice, then $C_{\theta\up^G_H}(M)=C_{\theta}(M\down^G_H)$, 
	\item if $M$ is a $\mathcal{R}[H]$-lattice, then  $C_{\theta}(M\up^G_H)=C_{\theta\down_H^G}(M)$,
	\item if $H$ is normal, then given a relation of $G/H$ and a $\R[G]$-lattice $M$, \linebreak$C_{\inf ^G_{G/H}\theta}(M)=C_{\theta}(\defl^G_{G/H}M)$,
	\item for any inclusion $\mathcal{R} \hookrightarrow \mathcal{T}$, with $\mathcal{T}$ a PID, any relation $\theta$, and $\mathcal{R}[G]$-lattice $M$, we have \linebreak $C_\theta(M)=C_\theta(M\ot \mathcal{T})\in \mathcal{L}\tii/(\mathcal{T}\tii)^2$ where $\mathcal{L}$ denotes the field of fractions of $\mathcal{T}$. \label{regextscalars}
\end{enumerate}
\end{lemma}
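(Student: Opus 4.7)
The plan is to verify each of the six parts by unpacking Definition \ref{regdef} and performing direct computations with $G$-invariant inner products on the relevant fixed subspaces. Throughout, I would fix arbitrary such inner products, appealing to \cite[Thm.\ 2.17]{DDRegConstParity} for independence of the final answer modulo $(\mathcal{R}\tii)^2$.

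Parts {\it i)} and {\it ii)} are essentially formal. For {\it i)}, the orthogonal direct sum of $G$-invariant inner products on $M,N$ is $G$-invariant on $M\op N$, and on $(M\op N)^H = M^H\op N^H$ the resulting Gram matrix is block-diagonal, so its determinant factors into the two determinants on $M^H$ and $N^H$. For {\it ii)}, additivity in the relation is immediate from the product-of-ratios shape of $C_\theta(M)$.

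Parts {\it iii)}, {\it v)} and {\it vi)} are close to tautological once the fixed subspaces are identified. For {\it iii)}, the subgroups appearing in $\theta\up^G_H$ are those already appearing in $\theta$, viewed inside $G$; the $\mathcal{R}$-module $M^{H_i}$ and the restricted inner product are insensitive to whether $M$ is regarded as a $G$- or $H$-module, so the two formulas coincide term-by-term. For {\it v)}, every $K\le G$ arising in $\inf\theta$ contains $H$, whence $M^K=(\defl M)^{K/H}$, and the only discrepancy is a prefactor of $1/|K|$ in place of $1/|K/H|$. The total spurious contribution of $|H|$, summed over the constituents of $\inf\theta$ with signs, carries exponent $\sum_i a_i \dim(\defl M)^{H_i}$, which vanishes because $\theta=\sum_i a_i[H_i]$ is a Brauer relation of $G/H$ and hence annihilates the character of any $G/H$-module. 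Part {\it vi)} is immediate because fixed points commute with extension of scalars for a free module over a PID, and the determinant of the Gram matrix is the same element, which maps into $\mathcal{L}\tii/(\mathcal{T}\tii)^2$ under the natural quotient.

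The hardest part will be {\it iv)}, which requires genuine Mackey bookkeeping. I would choose coset representatives $g_1,\ldots,g_n$ for $G/H$, fix an $H$-invariant inner product on $M$, and extend it to $M\up^G_H=\bigoplus_i g_i\ot M$ by declaring distinct $H$-cosets orthogonal and using the original form on each summand; $G$-invariance then follows from a short check. For $K\le G$, the action of $K$ permutes the summands with stabiliser $K\cap gHg^{-1}$ at $gH$, so by Mackey $(M\up^G_H)^K$ decomposes orthogonally over double cosets $K\bac G/H$, with the orbit indexed by $g$ spanned by trace vectors $\sum_{k\in K/(K\cap gHg^{-1})} k(g\ot v)$ as $v$ ranges over $M^{K^g\cap H}$ (after identifying $g^{-1}(K\cap gHg^{-1})g=K^g\cap H$). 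A direct norm computation shows that within the orbit these trace vectors have Gram matrix equal to $[K:K\cap gHg^{-1}]$ times the Gram matrix on $M^{K^g\cap H}$, and combining this factor with the prefactor $1/|K|$ collapses to $1/|K^g\cap H|$, exactly matching the prefactor on the restriction side via the Mackey decomposition \eqref{mackeyforgsets} of $\theta\down^G_H$. Multiplying over the constituents of $\theta$ then yields $C_\theta(M\up^G_H)=C_{\theta\down_H^G}(M)$.
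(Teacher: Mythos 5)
The paper does not prove Lemma \ref{regcontoolkit}; it is stated as a citation to \cite{DDRegConstParity} and used as a black box. There is therefore no ``paper's own proof'' to compare against, so I will evaluate your proposal on its own terms.

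Your argument is correct and is, as far as I can tell, a faithful reconstruction of the elementary Gram-matrix and Mackey bookkeeping that the Dokchitser--Dokchitser source itself uses. Parts {\it i)} and {\it ii)} follow directly from the block-diagonal/multiplicative shape of the formula, as you say. For {\it iii)} the key observation that $M^{L}$ and its restricted pairing are insensitive to whether $M$ is viewed as a $G$- or $H$-module is exactly right, with the minor remark that one should fix a $G$-invariant form and note that it is automatically $H$-invariant. Part {\it vi)} is flat base change for the kernel defining $M^K$, which is what you invoke. For {\it v)}, the accounting of the stray $|H|$-factor is the only nontrivial step; your appeal to the fact that $\sum_i a_i\dim(\defl M\ot\mathcal{K})^{H_i}=\langle \defl M\ot\mathcal{K},\sum_i a_i\one_{H_i}\!\up\rangle=0$ because $\theta$ is a Brauer relation is the correct mechanism, though the phrase ``annihilates the character of any $G/H$-module'' should be tightened to this inner-product computation. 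Part {\it iv)} is the one place where the details genuinely matter, and your computation is right: for the double coset of $g$ the $K$-fixed subspace is spanned by the trace vectors you describe, their Gram matrix is $[K:K\cap gHg^{-1}]$ times the Gram matrix of $M^{K^g\cap H}$, and $\tfrac{[K:K\cap gHg^{-1}]}{|K|}=\tfrac{1}{|K\cap gHg^{-1}|}=\tfrac{1}{|K^g\cap H|}$ matches the prefactor produced by Mackey's formula \eqref{mackeyforgsets} on the restriction side. So the proposal is a complete and correct proof, filling in a lemma the present paper only cites.
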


\begin{notat}
	By definition, regulator constants of rationally self-dual $\z_p[G]$-modules take values in $\q_p^\ti/(\z_p^\ti)^2$. Let $v_p \colon \q_p^\ti \to \z$ denote the usual $p$-adic valuation. This descends to a function $v_p\colon \q_p^\ti/(\z_p^\ti)^2\to \z$. For any prime $\ell \neq p$, we also have a ``valuation at $\ell$'' function $\q_p^\ti/(\z_p^\ti)^2\to \z/2\z$,  which we also denote by $v_\ell$.
\end{notat}
\begin{remark}\label{productp}
 Since the regulator constant of a $\z[G]$-lattice is always a positive rational number (see Definition \ref{no-1}), Lemma \ref{regcontoolkit} {\it vi)} shows that the regulator constant $C_\theta(M)$ of a $\z[G]$-lattice $M$ is a function of the values $v_p(C_\theta(M\ot \z_p))$ as $p$ runs over all primes.
\end{remark} 
The following observation will be crucial:
\begin{lemma}[{\rm \cite[Lem.\ 3.6]{BartelDih}}]\label{prelnopterms}
If $G$ is a finite group and $\theta$ a relation in characteristic $\ell$, then for any prime $p$ (possibly equal to $\ell$) and $M$ any $\z[G]$ or rationally self-dual $\z_{p}[G]$-lattice we have
\[ v_\ell(C_\theta(M))=0.\]
\end{lemma}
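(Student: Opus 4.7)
The plan is to reduce the statement in three stages: first from $\z[G]$-lattices to $\z_\ell[G]$-lattices, then to the rational class in $A(\q_\ell[G])$, and finally via Conlon's induction theorem to modules induced from $\ell$-hypo-elementary subgroups, where the restricted relation vanishes. If $M$ is a $\z[G]$-lattice, Lemma \ref{regcontoolkit} {\it vi)} identifies $v_\ell(C_\theta(M))$ with $v_\ell(C_\theta(M\ot\z_\ell))$, so assume throughout that $M$ is a $\z_\ell[G]$-lattice. Writing $\theta=\sum_i[H_i]-\sum_j[H'_j]$, Lemma \ref{BRp=Brzpapp} yields a $\z_\ell[G]$-module isomorphism $\bigoplus_i\z_\ell[G/H_i]\cong\bigoplus_j\z_\ell[G/H'_j]$; applying $\operatorname{Ext}^k_{\z_\ell[G]}(-,Y)$ to this isomorphism for any $\z_\ell[G]$-module $Y$ (equivalently $H^k(H,Y)$ via Shapiro's lemma) produces $\z_\ell$-module isomorphisms $\bigoplus_i H^k(H_i,Y)\cong\bigoplus_j H^k(H'_j,Y)$ for every $k\geq 0$, functorial in $Y$.

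The heart of the argument is to show that $v_\ell(C_\theta(M))$ depends only on the class of $M\ot\q_\ell$ in $A(\q_\ell[G])$. For two $\z_\ell[G]$-lattices $N\subseteq M$ of the same rational type, the change-of-lattice formula for bilinear forms gives
\[v_\ell(C_\theta(N))-v_\ell(C_\theta(M)) = 2\sum_i v_\ell[M^{H_i}:N^{H_i}] - 2\sum_j v_\ell[M^{H'_j}:N^{H'_j}].\]
With $A=M/N$ finite and using the long exact sequence of $0\to N\to M\to A\to 0$, one has $v_\ell[M^H:N^H] = v_\ell|A^H| - v_\ell|\ker(H^1(H,N)\to H^1(H,M))|$; the isomorphisms above applied with $Y\in\{N,M,A\}$, combined with their functoriality in $Y$, match the corresponding terms on the $H_i$ and $H'_j$ sides, so the right-hand side of the display vanishes.

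Granting this invariance, I apply Conlon's induction theorem to write $[M\ot\q_\ell] = \sum_H c_H[V_H\up^G]$ in $A(\q_\ell[G])$, with $H$ ranging over $\ell$-hypo-elementary subgroups of $G$ and $V_H$ rationally self-dual $\q_\ell[H]$-modules (self-duality being enforced by symmetrising). Fixing any rationally self-dual $\z_\ell[H]$-lattice $L_H$ inside each $V_H$ and combining rational invariance with multiplicativity (Lemma \ref{regcontoolkit} {\it i), ii)}), we obtain $v_\ell(C_\theta(M)) = \sum_H c_H v_\ell(C_\theta(L_H\up^G)) = \sum_H c_H v_\ell(C_{\theta\down_H}(L_H))$ by Lemma \ref{regcontoolkit} {\it iv)}. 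Since every subgroup of an $\ell$-hypo-elementary group is again $\ell$-hypo-elementary, Theorem \ref{dimbrelp} {\it ii)} yields $\br_\ell(H)=0$, so $\theta\down_H=0$ and each summand vanishes.

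The hardest step will be the rational-invariance claim in the middle paragraph; it requires careful cohomological bookkeeping in the long exact sequence and makes essential use of the characteristic $\ell$ hypothesis via the matching of Ext-group orders on the two sides.
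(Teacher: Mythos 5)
The paper does not reprove this lemma---it is cited from Bartel's paper---so there is no in-paper argument to compare against, and I evaluate your proposal on its own terms. The overall strategy is sound and, as far as I can tell, the cohomological core is correct, but two points deserve attention.

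The rational-invariance step is the right idea and it does work. From $0\to N\to M\to A\to 0$ with $A=M/N$ finite, the long exact sequence in $H^*(H,-)$ gives $v_\ell[M^{H}:N^{H}]=v_\ell|A^{H}|-v_\ell\bigl|\ker\bigl(H^{1}(H,N)\to H^{1}(H,M)\bigr)\bigr|$; the characteristic-$\ell$ hypothesis produces a genuine $\z_\ell[G]$-isomorphism $\bigoplus_i\z_\ell[G/H_i]\cong\bigoplus_j\z_\ell[G/H'_j]$, and applying $\operatorname{Ext}^k_{\z_\ell[G]}(-,Y)$ gives the functorial isomorphisms you quote, which make both terms cancel across the relation. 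This is precisely the place where ``characteristic $\ell$'' (and not merely ``characteristic zero'') is used, and you are right to emphasise it. The change-of-lattice computation $\det(\langle\,,\rangle|_{N^H})=[M^H:N^H]^2\det(\langle\,,\rangle|_{M^H})$ is also correct, and the reduction of two lattices of the same rational type to a sublattice inclusion is standard.

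Two issues. First, ``Conlon's induction theorem'' over $\q_\ell$ is not the right tool to name here: the induction theorem available over a characteristic-zero field is Artin's (Theorem \ref{ait}), which writes $[M\ot\q_\ell]$ as a $\q$-linear combination of modules induced from \emph{cyclic} subgroups. Since cyclic groups are $\ell$-hypo-elementary---indeed they have no Brauer relations whatsoever by Corollary \ref{dimbrel0}---$\theta\downarrow_H=0$ immediately, and Conlon (which in this paper is a statement about $A(\z_p[G],\triv)$ and species, Theorem \ref{conlon}) is not needed. You should also note explicitly that the ``symmetrising'' step is harmless because the direct sum of a module with its dual is self-dual and has the same class in $A(\q_\ell[G])$ by self-duality of $M\ot\q_\ell$. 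Second, and more substantially, your argument reduces at the outset to $\z_\ell[G]$-lattices, but the lemma also asserts the vanishing of $v_\ell(C_\theta(M))\in\z/2\z$ when $M$ is a rationally self-dual $\z_p[G]$-lattice with $p\neq\ell$, where $v_\ell$ is the $\z/2\z$-valued function defined just before Remark \ref{productp}. Your proof never touches this case. Depending on how that function is construed it may be easy, but it is part of the statement and a complete proof must address it.
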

\begin{remark}\label{lphypo}
	If $G$ is a finite group and $p$ is a prime not dividing the order of $G$, then the $p$-hypo-elementary subgroups of $G$ are the cyclic subgroups and so all characteristic zero relations are characteristic $p$ relations (Lemma \ref{quotbasis}). Thus Lemma \ref{prelnopterms} shows that the only prime powers appearing in regulator constants divide the order of the group. If $G$ itself is $\ell$-hypo-elementary, then, for $p\neq \ell$, all its $p$-hypo-elementary subgroups are cyclic and so all its characteristic $0$ relations are relations in characteristic $p$ and its regulator constants are always $\ell$\th powers.
\end{remark}
\section{Pairings from regulator constants}\label{paireg}

In this section, we remark that the construction of regulator constants canonically defines a pairing between Brauer relations and rationally self-dual $\z_p[G]$-lattices. This pairing has obvious Brauer relations and lattices which must lie in the kernel, but it is unclear what the kernels should be in general. Sections \ref{regconstinv} and \ref{nonvanartin} can be seen as partial results in this direction. Finally, we show that non-degeneracy of such pairings leads to methods to determine the isomorphism classes of permutation modules.
\subsection{The regulator constant pairing}\label{regpai}
\begin{constr}\label{regpairingconstr}
Let $G$ be any finite group and $p$ a prime. The map
	\begin{align*}
v_p(C_{(-)}(-)) \colon \br_0(G) \ti a(\z_p[G],\sd)&\longrightarrow \z\\
(\theta, M) \qquad \qquad& \longmapsto v_p(C_\theta(M)),
\end{align*}
is bi-additive (Lemma \ref{regcontoolkit} {\it i),ii)}). %
Extending $\q$-linearly we get a pairing,
\[  v_p(C_{(-)}(-)) \colon \BR_0(G) \ti A(\z_p[G],\sd)\longrightarrow \q ,\]
 which we also denote by $v_p(C_{(-)}(-))$ and which we call the \emph{regulator constant pairing}. By Lemma \ref{prelnopterms}, this factors as
\[ v_p(C_{(-)}(-)) \colon \BR_0(G)/\BR_p(G) \ti A(\z_p[G],\sd)  \longrightarrow \q. \]
\end{constr}
In Section \ref{D2pfullex}, we calculate the full regulator constant pairing for dihedral groups $D_{2p}$ with $p$ odd, one of the few families of groups where a classification of all indecomposable lattices exists.
\begin{remark}\label{indfromcycsgps}The pairing $v_p(C_{(-)}(-))$ is far from non-degenerate; $A(\z_p[G],\sd)$ is often infinite dimensional whilst $\BR_0(G)$ is always finite dimensional. Explicit elements of the right kernel are given by taking a lattice $M\up^G_H$ induced from a cyclic subgroup $H$. This pairs to zero with all relations since
	\[ v_p(C_\theta(M \up^G_H))=v_p(C_{\theta\down _H^G}(M))=0, \]
	where first equality is Lemma \ref{regconstmult} {\it iv)} and the second is because cyclic groups have no non-zero Brauer relations (Corollary \ref{dimbrel0}). The behaviour of the left kernel is less clear:
\end{remark}
\begin{question}\label{regpairingquest}
	Are there groups for which the left kernel of $v_p(C_{(-)}(-))\colon \BR_0(G)/\BR_p(G) \ti A(\z_p[G],\sd) \to \q$ is non-trivial?
\end{question}

It is also interesting to replace $\z_p$ with other rings. For $\zp$ and so also for $\z_p$, if $G$ has cyclic Sylow $p$ subgroups, then the left kernel is trivial (see Theorem \ref{tspairingnondeg}). Outside of this case, things are less clear and there are some very small groups (e.g.\ $C_3\ti C_3 \ti S_3$ when $p=3$) for which we have been unable to determine the left kernel. There are groups with Brauer relations which pair trivially with all summands of permutation modules but are not relations in characteristic $p$ (namely for $C_3\ti C_3 \ti S_3$, see Section \ref{counterexamples}). On the other hand, the kernel is never all of $\BR_0(G)$ (Theorem \ref{nonvanishing}).
\begin{lemma}\label{sillylemma}
	Let $\theta$ be a relation of a finite group $G$. For any prime $p$ the following are equivalent,
	\begin{enumerate}[i)]
		\item $v_p(C_\theta(-))\colon A(\z_p[G],\sd) \to \q$ vanishes identically,
		\item $v_p(C_{\theta\down _H}(-))\colon A(\z_p[H],\sd)\to \q$ vanishes identically for all conjugacy classes of $p$-hypo-elementary subgroups $H\le G$.
	\end{enumerate} 
\end{lemma}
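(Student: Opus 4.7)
The plan is to deduce both implications from the reciprocity identities for regulator constants (Lemma \ref{regcontoolkit} \emph{iv)}) combined, for the harder direction, with Conlon's induction theorem (the same theorem underlying Theorem \ref{dimbrelp}), which expresses any class in $A(\z_p[G])$ as a $\q$-linear combination of modules induced from $p$-hypo-elementary subgroups.

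The direction \emph{i)} $\Rightarrow$ \emph{ii)} is immediate. Fix $H\in\hyp_p(G)$ and a rationally self-dual $\z_p[H]$-lattice $N$. Since induction preserves rational self-duality (Definition \ref{selfdual}), $N\up^G_H\in A(\z_p[G],\sd)$, and Lemma \ref{regcontoolkit} \emph{iv)} gives
\[ v_p(C_{\theta\down_H}(N)) \;=\; v_p(C_\theta(N\up^G_H)) \;=\; 0 \]
by the hypothesis \emph{i)}. Extending by $\q$-linearity settles this direction.

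For \emph{ii)} $\Rightarrow$ \emph{i)}, let $M\in A(\z_p[G],\sd)$. Conlon's induction theorem supplies a decomposition $M=\sum_i \alpha_i\,\ind^G_{H_i}(N_i)$ in $A(\z_p[G])$ with $\alpha_i\in\q$, $H_i\in\hyp_p(G)$, and $N_i$ a (not necessarily self-dual) $\z_p[H_i]$-lattice. Writing $(-)^\vee$ for $\z_p$-linear duality and using that induction commutes with dualisation in the representation ring (since induced and coinduced modules agree for finite-index subgroups), the self-duality $M=M^\vee$ allows us to symmetrise as
\[ M \;=\; \tfrac{1}{2}(M+M^\vee) \;=\; \tfrac{1}{2}\sum_i \alpha_i\, \ind^G_{H_i}\!\bigl(N_i \oplus N_i^\vee\bigr), \]
where each $N_i\oplus N_i^\vee$ is rationally self-dual. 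Applying Lemma \ref{regcontoolkit} \emph{iv)} term by term yields
\[ v_p(C_\theta(M)) \;=\; \tfrac{1}{2}\sum_i \alpha_i\, v_p\bigl(C_{\theta\down_{H_i}}(N_i\oplus N_i^\vee)\bigr) \;=\; 0 \]
by hypothesis \emph{ii)}.

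The only mild subtlety is the symmetrisation step: a raw Conlon decomposition need not produce self-dual pieces, but averaging with the dual replaces each piece with a direct sum that lies in $A(\z_p[H_i],\sd)$, making the individual summands amenable to the reciprocity identity. All other steps are purely formal consequences of the regulator constant toolkit, so the proof is essentially a short manipulation once Conlon's theorem is in hand.
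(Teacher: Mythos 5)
Your forward direction is correct and matches the paper's. The reverse direction, however, has a genuine gap in the symmetrisation step: you write ``the self-duality $M = M^\vee$'', but $M$ is only \emph{rationally} self-dual, i.e.\ $M \otimes \q_p \cong (M\otimes\q_p)^\vee$. This does not imply $M \cong M^\vee$ as $\z_p[G]$-lattices, and the paper explicitly flags this distinction in Definition \ref{selfdual} (``a rationally self-dual $M$ need not be isomorphic to $\Hom(M,\R)$''). Consequently $[M] = [M^\vee]$ is not a valid identity in $A(\z_p[G])$, and $\tfrac{1}{2}(M + M^\vee)$ is not $M$; the final equality $v_p(C_\theta(M)) = \tfrac{1}{2}\sum_i \alpha_i\,v_p(C_{\theta\down_{H_i}}(N_i \oplus N_i^\vee))$ therefore does not follow.

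The concern motivating your symmetrisation (``a raw Conlon decomposition need not produce self-dual pieces'') disappears if you derive the Conlon decomposition in the canonical way. Writing $\one_G = \sum_{H\in\hyp_p(G)} \alpha_H\, \one_H\up^G$ and tensoring with $M$, the projection formula gives
\[
M = M\ot\one_G = \sum_{H\in\hyp_p(G)} \alpha_H\, (M\ot \one_H\up^G) = \sum_{H\in\hyp_p(G)} \alpha_H\, M\down_H\up^G_H,
\]
and each $M\down_H$ is automatically rationally self-dual because restriction preserves rational self-duality (Definition \ref{selfdual}). Applying Lemma \ref{regcontoolkit} \emph{iv)} termwise then finishes the argument with no symmetrisation needed. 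This is precisely what the paper does, so the fix is not to invoke a general Conlon decomposition of $M$ into arbitrary induced pieces, but the specific one coming from the projection formula.
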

\begin{proof}
		For the forward direction, use that $v_p(C_{\theta\down_H}(M))=v_p(C_\theta(M\up^G))=0$ (Lemma \ref{regcontoolkit} {\it iv)}). For the reverse, write $\one= \sum_{H\in \hyp_p(G)}\alpha_H \one_H \up^G$ as in the Conlon relation. Then for any rationally self-dual $\z_p[G]$-lattice 
		\[M =M\ot \one = \sum_{H\in \hyp_p(G)} \alpha_H \cdot( M\otimes \one \up^G_{H})=\sum_{H\in \hyp_p(G)} \alpha_H\cdot M\down_H\up^G_{H},\] where the $M\down_H$ are rationally self-dual. Then, 
		\[v_p(C_{\theta}(M))=\sum_{H\in \hyp_p(G)} \alpha_H v_p(C_{\theta}(M\down_H\up^G_{H}))=\sum_{H\in \hyp_p(G)} \alpha_H v_p(C_{\theta\down_{H}}(M\down_H))=0\].	
\end{proof}
\begin{lemma}\label{reductiontophypo}
	For any finite group $G$, the following are equivalent:
	\begin{enumerate}[i)]
		\item the left kernel of $v_p(C_{(-)}(-)) \colon \BR_0(G)/\BR_p(G) \ti A(\z_p[G],\sd) \to \q$ is trivial,
		\item the left kernel of $v_p(C_{(-)}(-)) \colon \BR_0(G/N)/\BR_p(G/N) \ti A(\z_p[G/N],\sd) \to \q$ is trivial for all normal subgroups $N$.
	\end{enumerate}
\noindent Moreover, both are implied by
\begin{enumerate}
	\item[iii)] the left kernels of $v_p(C_{(-)}(-))\colon  \BR_0(H) \ti A(\z_p[H],\sd)\to \q$	are trivial for all isomorphism classes of $p$-hypo-elementary subgroups $H\le G$.
\end{enumerate}
\end{lemma}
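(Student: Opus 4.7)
I plan to prove i) $\Leftrightarrow$ ii) and iii) $\Rightarrow$ i) separately. For i) $\Leftrightarrow$ ii), the direction ii) $\Rightarrow$ i) is immediate on taking $N = 1$. For the converse, I would use that inflation intertwines the regulator pairing well: Lemma \ref{regcontoolkit} v) gives $C_{\inf \theta}(M) = C_\theta(\defl M)$ for any rationally self-dual $\z_p[G]$-lattice $M$, and deflation preserves rational self-duality. Hence, if $\theta \in \BR_0(G/N)$ lies in the left kernel of the $G/N$-pairing, then $\inf \theta$ lies in the left kernel of the $G$-pairing. Applying i) gives $\inf \theta \in \BR_p(G)$; since deflation sends $\BR_p(G)$ to $\BR_p(G/N)$ and $\defl \circ \inf = \id$, this yields $\theta \in \BR_p(G/N)$.

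For iii) $\Rightarrow$ i), I would start from Lemma \ref{sillylemma}: $v_p(C_\theta(-))$ vanishes on $A(\z_p[G],\sd)$ if and only if $v_p(C_{\theta\down_H}(-))$ vanishes on $A(\z_p[H],\sd)$ for every $p$-hypo-elementary $H \le G$. A relation $\theta \in \BR_0(G)$ in the left kernel therefore restricts to an element of the left kernel of the $H$-pairing for each $p$-hypo $H$. Since $p$-hypo groups admit no non-trivial characteristic $p$ relations, $\BR_p(H) = 0$, so iii) forces $\theta\down_H = 0$ in $\BR_0(H)$ for every such $H$.

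It remains to deduce $\theta \in \BR_p(G)$ from these vanishing restrictions. Using the basis from Lemma \ref{quotbasis}, I would write $\theta = \sum_{H} a_H \theta_H\up^G + \phi$ with $H$ running over non-cyclic $p$-hypo conjugacy classes and $\phi \in \BR_p(G)$, aiming to show every $a_H$ is zero. Supposing otherwise, pick $H$ of maximum order with $a_H \ne 0$ and restrict to $H$. The term $\phi\down_H$ is a characteristic $p$ relation of the $p$-hypo group $H$, hence zero; the other terms expand via Mackey for Artin relations (Lemma \ref{mackeyforartrel}) as $\theta_{H'}\up^G\down_H = \sum_g \theta_{H'^g \cap H}\up^H$. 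Tracking the $[H]$-coefficient in $B(H)$, the relation $\theta_L\up^H$ contributes $1$ exactly when $L = H$, i.e.\ when $H \le H'^g$; maximality of $|H|$ then forces $H'^g = H$ and $g \in N_G(H)$, giving a coefficient $a_H \cdot [N_G(H):H] \ne 0$, contradicting $\theta\down_H = 0$.

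The principal technical obstacle is this final coefficient-tracking step: one must check that the Artin basis expansion interacts cleanly with Mackey restriction so as to isolate the leading $[H]$-coefficient. The rest is routine bookkeeping with inflation, deflation, and the explicit bases of $\BR_0$ and $\BR_p$ from Section \ref{brarel}.
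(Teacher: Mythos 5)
Your proposal is correct and follows essentially the same route as the paper: i) $\Leftrightarrow$ ii) via inflation/deflation and Lemma \ref{regcontoolkit}\,{\it v)}, and iii) $\Rightarrow$ i) via Lemma \ref{sillylemma} together with the basis description of $\BR_0(G)$ and $\BR_p(G)$ from Lemma \ref{quotbasis}. The only difference is that the paper asserts in one line (citing Lemma \ref{quotbasis}\,{\it i)}) that $\theta \notin \BR_p(G)$ forces some restriction $\theta\down_H \neq 0$ to a $p$-hypo-elementary $H$, whereas you spell out this step with the maximality/coefficient-tracking argument via Mackey for Artin relations — a correct filling-in of that detail rather than a genuinely different approach.
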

\begin{proof}
	To see {\it i)} $\implies$ {\it ii)}, suppose that the left kernel of the pairing for $G$ is trivial and let $\theta$ be a relation for $G/N$ which is not a relation in characteristic $p$. Since $\text{defl}\circ \text{inf}=\id$ and both take characteristic $p$ relations to characteristic $p$ relations, $\inf \theta$ must also not be a $p$-relation. So, by assumption, there exists an $M$ for which $0 \neq v_p(C_{\inf \theta}(M))$. By Lemma \ref{regcontoolkit}\,{\it v)}, $v_p(C_{\inf \theta}(M))=v_p(C_\theta(\defl M))\neq 0$ and $v_p(C_{\theta}(-))$ doesn't vanish identically. The reverse direction is automatic.
	
	Now assume {\it iii)}. By Lemma \ref{sillylemma}, $v_p(C_\theta(-))$ vanishes if and only if $v_p(C_{\theta\down_H}(-))$ vanishes for all $H$. But if $\theta$ is not a relation in characteristic $p$, there exists a $p$-hypo-elementary subgroup $H$ for which $\theta\down_H \neq 0$ (use Lemma \ref{quotbasis} {\it i)}) and so $ v_p(C_{\theta\down_H}(-))$ doesn't vanish.
\end{proof}
\subsection{The permutation pairing}\label{perpai}
In this section, we study the restriction of the regulator constant pairing to permutation modules. Here we have a chance to be much more explicit as Theorem \ref{dimbrelp} describes a basis of $A(\z_p[G],\perm)$, and regulator constants of permutation modules are easy to calculate.
 \begin{notat}
 	Let $P(G)$ denote the free $\q$-vector space on the set of conjugacy classes of non-cyclic $p$-hypo-elementary subgroups.
 \end{notat}
\begin{remark}\label{kernels}
	As in Remark \ref{indfromcycsgps}, if we restrict to $A(\z_p[G],\perm)$, then $v_p(C_{(-)}(-))$ factors as
	\[  v_p(C_{(-)}(-)) \colon \BR_0(G)/\BR_p(G) \ti A(\z_p[G],\perm)/A(\z_p[G],\cyc) \to \q. \]
	Lemma \ref{quotbasis}\,{\it ii)} demonstrates that $P(G)$ is canonically isomorphic to $\BR_0(G)/\BR_p(G)$ (via \linebreak[4] $H \mapsto \theta_H$). On the other hand, Theorem \ref{ait} shows that $P(G)$ is canonically identified with \linebreak[4] $A(\z_p[G],\perm)/A(\z_p[G],\cyc)$ by sending $H$ to $\one \up^G_H$.
	
	It is not true that the spaces can be identified before factoring $v_p$. Indeed, $\BR_0(G)$ is of dimension equal to the number of non-cyclic subgroups, whereas $A(\z_p[G],\perm)$ is of dimension equal to the number of non-$p$-hypo-elementary subgroups.
\end{remark}
\begin{constr}\label{permpairingconstr}
Via these canonical identifications, we may consider the restricted pairing of Remark \ref{kernels} as a pairing
\[ \la \ \, , \ \ra_\perm \colon P(G)\ti P(G) \longrightarrow \q , \]
sending $(H,K)$ to $v_p(C_{\theta_H}(\one\up^G_K))$. We call $\la\ \, , \ \ra_\perm$ the \emph{permutation pairing}.
\end{constr}
\begin{lemma}\label{sym} For any finite group $G$ and prime $p$, $\la \ \, , \ \ra_\perm \colon P(G)\ti P(G) \longrightarrow \q$ is symmetric.
\end{lemma}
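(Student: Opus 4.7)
The plan is to unwind both sides of the pairing into a sum over double cosets indexed by intersections of conjugates, and then show that inversion $g \mapsto g^{-1}$ matches these sums term by term.

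First I would use the identifications of Remark \ref{kernels} to write
\[ \langle H,K\rangle_\perm \;=\; v_p\bigl(C_{\theta_H\up^G}(\one\up^G_K)\bigr), \]
and then apply Lemma \ref{regcontoolkit}\,{\it iv)} to pull the induction across, obtaining
\[ \langle H,K\rangle_\perm \;=\; v_p\bigl(C_{\theta_H\up^G\down_K}(\one_K)\bigr). \]
The Mackey formula for Artin relations (Lemma \ref{mackeyforartrel}\,{\it ii)}) rewrites this restriction as
\[ \theta_H\up^G\down_K \;=\; \sum_{g\in H\backslash G/K} \theta_{H^g\cap K}\up^K, \]
so multiplicativity of $C_{(-)}(-)$ in the relation (Lemma \ref{regcontoolkit}\,{\it ii)}) together with Lemma \ref{regcontoolkit}\,{\it iv)} yields
\[ \langle H,K\rangle_\perm \;=\; \sum_{g\in H\backslash G/K} v_p\bigl(C_{\theta_{H^g\cap K}}(\one_{H^g\cap K})\bigr). \]
The analogous computation gives $\langle K,H\rangle_\perm = \sum_{h\in K\backslash G/H} v_p(C_{\theta_{K^h\cap H}}(\one_{K^h\cap H}))$.

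The remaining step — which is the only substantive point — is to match these two sums via the bijection $g\leftrightarrow g^{-1}$ between $H\backslash G/K$ and $K\backslash G/H$. A direct computation gives $g^{-1}(H^g\cap K)g = H\cap K^{g^{-1}}$, so $H^g\cap K$ and $K^{g^{-1}}\cap H$ are $G$-conjugate (in particular isomorphic) subgroups. Since the value $C_{\theta_L}(\one_L)$ depends only on the isomorphism class of $L$ (the Artin relation $\theta_L$ is intrinsic to $L$, and the trivial module restricts to the trivial module under any isomorphism), the two sums agree term by term, proving symmetry.

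I don't expect any real obstacle here: everything reduces to the Mackey formula for Artin relations (already available as Lemma \ref{mackeyforartrel}) and the elementary observation that the double-coset bijection $g\mapsto g^{-1}$ conjugates the relevant intersections to one another.
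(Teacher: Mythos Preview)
Your proof is correct. The paper's argument is very close but handles the two sides asymmetrically so as to avoid the final bijection step: for $\langle H,K\rangle_\perm$ it proceeds exactly as you do, moving the induction on the relation side and using Lemma~\ref{mackeyforartrel}\,{\it ii)} to get a product over $g\in H\backslash G/K$ of $C_{\theta_{H^g\cap K}}(\one_{H^g\cap K})$; but for $\langle K,H\rangle_\perm$ it instead moves the induction on the \emph{module} side, writing $C_{\theta_K\up^G}(\one_H\up^G)=C_{\theta_K}(\one_H\up^G\down_K)$ via Lemma~\ref{regcontoolkit}\,{\it iii)}, applying ordinary Mackey to $\one_H\up^G\down_K$, and then Lemma~\ref{regcontoolkit}\,{\it iv)} together with $\theta_K\down_{H^g\cap K}=\theta_{H^g\cap K}$ (Lemma~\ref{mackeyforartrel}\,{\it i)}). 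This lands on the \emph{same} product over $H\backslash G/K$, so no matching of double cosets is needed. Your route is perfectly fine; the paper's just sidesteps the bookkeeping of the bijection $g\leftrightarrow g^{-1}$ by exploiting the dual roles of Lemma~\ref{regcontoolkit}\,{\it iii)} and {\it iv)}. (One small remark: with the convention $H^g=g^{-1}Hg$, the conjugation taking $H^g\cap K$ to $K^{g^{-1}}\cap H$ is by $g$, not $g^{-1}$; the conclusion that the two intersections are conjugate is of course unaffected.)
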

\begin{proof}\vspace{-\topsep}\vspace{3pt}
For any two subgroups $H$ and $K$ of $G$, Lemmas \ref{mackeyforartrel}, \ref{regconstmult} show that
	\begin{align*}
	C_{\theta_H\up^G}(\one_K\up^G)&=C_{\theta_H\up^G\down_K}(\one_K)\\
	&= \prod_{g \in H \ba G /K}\hspace{-5pt}C_{\theta_{H^g\cap K}}(\one_{H^g\cap K}).
	\end{align*}
	Whilst,
	\begin{align*}
	C_{\theta_K\up^G}(\one_H\up^G)&= C_{\theta_K}(\one_H\up^G\down_K)\\
	&= C_{\theta_K}\left(\sum_{g \in H \ba G/K}\hspace{-5pt}\one_{H^g \cap K }\up^K\right)\\
	&=\prod_{g \in H \ba G/K}\hspace{-5pt}\left( C_{\theta_{H^g\cap K}}(\one_{H^g\cap K})\right).
	\end{align*}
\end{proof}
\begin{remark}\label{permpairformula}
	Along the same lines, Lemma \ref{regcontoolkit} and \eqref{mackeyforartreleq} show that, for $H,K\le G$,
	\begin{align}
\la H,K\ra_\perm &\hspace{-2pt}:= v_p(C_{\theta_{H}\up^G}(\one _{K}\up^G))\nonumber\\
&= v_p(C_{\theta_{H}\up^G\down_{K}}(\one _{K}))\nonumber \\
&=\sum_{g \in H\bac G/K}\hspace{-5pt} C_{\theta_{H^g} \down_{H^g\cap K}\up^{K}}(\one _{ K})\nonumber\\
&=\sum_{g \in H\bac G/K}\hspace{-5pt} C_{\theta_{H^g\cap K}\up^{K}}(\one _{K})\nonumber\\
&=\sum_{g \in H\bac G/K}\hspace{-5pt} v_p(C_{\theta_{H^g\cap K}}(\one _{H^g\cap K})). \label{doublecosetformula}
\end{align}
Combining this with \eqref{oneform} of p\pageref{oneform} gives a formula for the permutation pairing.
\end{remark}

It is tempting to ask if permutation pairing is non-degenerate for all groups $G$. This proves too naive, in Section \ref{counterexamples}, we exhibit a family of groups for which the permutation pairing is degenerate (e.g.\ $C_3\ti C_3\ti S_3$ when $p=3$). Analogously to Lemmas \ref{sillylemma}, \ref{reductiontophypo} we have:
\begin{lemma}
	Let $\theta$ be a relation of a finite group $G$. For any prime $p$ the following are equivalent,
\begin{enumerate}[i)]
	\item $v_p(C_\theta(-))\colon A(\z_p[G],\perm) \to \q$ vanishes identically,
	\item $v_p(C_{\theta\down _H}(-))\colon A(\z_p[H],\perm)\to \q$ vanishes identically for all conjugacy classes of $p$-hypo-elementary subgroups $H\le G$.
\end{enumerate}
\end{lemma}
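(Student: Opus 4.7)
The plan is to mimic the proof of Lemma \ref{sillylemma} almost verbatim, the only thing to check being that the various operations (restriction, induction, tensoring) preserve the property of being a permutation module, so that we stay inside $A(\z_p[G],\perm)$ throughout. Both implications will rest on Lemma \ref{regcontoolkit} \emph{iv)} (behaviour of regulator constants under induction) and the Conlon decomposition $\one_G=\sum_{H\in \hyp_p(G)}\alpha_H\,\one_H\up^G$ used before.

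For the forward direction \emph{i)} $\Rightarrow$ \emph{ii)}, fix a conjugacy class of $p$-hypo-elementary subgroup $H\le G$ and let $N$ be any permutation $\z_p[H]$-module. Then $N\up^G$ is a permutation $\z_p[G]$-module, hence lies in $A(\z_p[G],\perm)$. By Lemma \ref{regcontoolkit} \emph{iv)},
\[
v_p\bigl(C_{\theta\down_H}(N)\bigr)=v_p\bigl(C_\theta(N\up^G)\bigr)=0
\]
by hypothesis \emph{i)}, so $v_p(C_{\theta\down_H}(-))$ vanishes on the generators and hence on all of $A(\z_p[H],\perm)$.

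For the reverse direction \emph{ii)} $\Rightarrow$ \emph{i)}, let $M$ be a permutation $\z_p[G]$-module. Tensoring the Conlon decomposition with $M$ and using the projection formula $M\otimes (\one_H\up^G)\cong (M\down_H)\up^G$, we obtain the identity
\[
M=\sum_{H\in \hyp_p(G)}\alpha_H\, M\down_H\up^G\qquad \text{in }A(\z_p[G]).
\]
Crucially, for each $H$ the module $M\down_H$ is again a permutation $\z_p[H]$-module, so it lies in $A(\z_p[H],\perm)$. Combining Lemma \ref{regcontoolkit} \emph{ii), iv)} with assumption \emph{ii)} then gives
\[
v_p(C_\theta(M))=\sum_{H\in \hyp_p(G)}\alpha_H\, v_p\bigl(C_\theta(M\down_H\up^G)\bigr)=\sum_{H\in \hyp_p(G)}\alpha_H\, v_p\bigl(C_{\theta\down_H}(M\down_H)\bigr)=0,
\]
as required. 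Since permutation modules span $A(\z_p[G],\perm)$, the vanishing is identical on the whole space. There is no real obstacle here; the only point worth checking carefully is that the $M\otimes(\one_H\up^G)=(M\down_H)\up^G$ identity together with restriction keeps us inside the permutation subring, which is exactly the closure property already recorded in Notation \ref{notat}.
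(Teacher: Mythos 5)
Your proof is correct and is exactly the argument the paper has in mind: the paper simply asserts that ``the proofs are identical to before,'' referring to the proof of Lemma \ref{sillylemma}, and you have reproduced that proof with $\sd$ replaced by $\perm$ while explicitly confirming (correctly) that induction, restriction, and tensoring with $\one_H\up^G$ all preserve the permutation subring.
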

\begin{lemma}\label{reductiontophypoperm}\label{permredtophypo}
	For any finite group $G$, the following are equivalent:
	\begin{enumerate}[i)]
		\item the permutation pairing is non-degenerate,
		\item the permutation pairing of $G/N$ is non-degenerate for all $N \unlhd G$.
\item[] \hspace{-0.6cm} Moreover, both are implied by
		\item[iii)] the permutation pairing of $H$ is non-degenerate for all $p$-hypo-elementary subgroups $H$.
	\end{enumerate}
\end{lemma}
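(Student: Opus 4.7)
The plan is to mimic the proof of Lemma \ref{reductiontophypo}, adapted to permutation modules, using the formalism of Lemma \ref{regcontoolkit} throughout. Since the permutation pairing is symmetric by Lemma \ref{sym}, non-degeneracy is equivalent to the triviality of its left kernel; under the identification $P(G) \cong \BR_0(G)/\BR_p(G)$ of Remark \ref{kernels}, this means that every $\theta \in \BR_0(G)$ satisfying $v_p(C_\theta(M)) = 0$ for all permutation $\z_p[G]$-lattices $M$ must lie in $\BR_p(G)$. The implication (ii) $\Rightarrow$ (i) is then immediate by setting $N = \{1\}$.

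For (i) $\Rightarrow$ (ii), I would start with $\theta \in \BR_0(G/N)$ in the left kernel of the pairing for $G/N$ and show $\theta \in \BR_p(G/N)$. For any permutation $\z_p[G]$-module $M$, Lemma \ref{regcontoolkit} v) gives $v_p(C_{\inf\theta}(M)) = v_p(C_\theta(\defl M))$; since $\defl M$ is again a permutation module (deflation commutes with $b_\R$), this vanishes by hypothesis. Hence $\inf\theta$ lies in the left kernel for $G$ and, by (i), belongs to $\BR_p(G)$. Applying $\defl$, which preserves characteristic $p$ relations as it too commutes with $b_\R$, and using $\defl \circ \inf = \id$, one recovers $\theta \in \BR_p(G/N)$.

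For (iii) $\Rightarrow$ (i), suppose $\theta \in \BR_0(G)$ is in the left kernel for $G$. For any $H \le G$ and permutation $\z_p[H]$-module $N$, the induced module $N\up^G$ is a permutation $\z_p[G]$-module, so Lemma \ref{regcontoolkit} iv) yields $v_p(C_{\theta\down_H}(N)) = v_p(C_\theta(N\up^G)) = 0$. Thus $\theta\down_H$ is in the left kernel for $H$, and applying (iii) when $H$ is $p$-hypo-elementary gives $\theta\down_H \in \BR_p(H)$. To deduce $\theta \in \BR_p(G)$ from this, I would use that $\BR_p(G)$ is an ideal of $B(G)$, together with the Conlon relation $[G] - \sum_{H\in\hyp_p(G)}\alpha_H[H] \in \BR_p(G)$. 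Multiplying by $\theta$ gives
\[\theta \equiv \sum_{H\in \hyp_p(G)}\alpha_H\, \theta\cdot [H] \pmod{\BR_p(G)}.\]
Frobenius reciprocity in the Burnside ring rewrites $\theta\cdot[H] = (\theta\down_H)\up^G$, and since induction preserves characteristic $p$ relations (by Mackey), each term on the right lies in $\BR_p(G)$, forcing $\theta \in \BR_p(G)$.

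The main obstacle is essentially this last Burnside-ring manipulation; everything else is bookkeeping about which of $\ind,\res,\inf,\defl$ preserve permutation modules and $\BR_p(G)$, all recorded by commutativity with $b_\R$ in Notation \ref{notat}.
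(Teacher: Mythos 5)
Your proof is correct, and it follows the same overall strategy the paper relies on for Lemma~\ref{reductiontophypo} (which the paper invokes for Lemma~\ref{reductiontophypoperm} with the remark ``the proofs are identical to before''): inflation/deflation with $\defl\circ\inf = \id$ for (i)~$\Leftrightarrow$~(ii), and Conlon's induction theorem for (iii)~$\Rightarrow$~(i). The one place you genuinely diverge is in how Conlon is deployed for (iii)~$\Rightarrow$~(i). The paper argues on the module side: the analogue of Lemma~\ref{sillylemma} uses $\one = \sum_{H\in\hyp_p(G)}\alpha_H\one_H\up^G$ to decompose any $M$ and reduce the vanishing of $v_p(C_\theta(-))$ to its restrictions; it then asserts that a non-$p$-relation $\theta$ has nonzero restriction to some $p$-hypo-elementary $H$, citing Lemma~\ref{quotbasis}~\textit{i)}. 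You instead argue on the Burnside-ring side: the Conlon relation $[G]-\sum\alpha_H[H]\in\BR_p(G)$, the fact that $\BR_p(G)$ is an ideal, and the projection formula $\theta\cdot[H]=(\theta\down_H)\up^G$ give directly that $\theta\down_H\in\BR_p(H)$ for all $p$-hypo-elementary $H$ forces $\theta\in\BR_p(G)$. The two are formally dual, and your version has the small advantage of making the step the paper only gestures at (``$\theta$ not a $p$-relation $\Rightarrow$ some $\theta\down_H\ne 0$'') fully explicit as its contrapositive. One minor slip: you attribute ``induction preserves characteristic $p$ relations'' to Mackey, but it is simply the fact that $\ind$ commutes with $b_\R$ (recorded at the end of Notation in Section~\ref{brarel}); Mackey is what one uses for \emph{restriction}, not induction.
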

The proofs are identical to before. As a result, we see that infinitely many groups exist where the permutation pairing is degenerate, for example, when $p=3$, all groups with a $C_3\ti C_3\ti S_3$ quotient.
We prove two main results on the permutation pairing. Theorem \ref{tspairingnondeg} states that the permutation pairing is non-degenerate for all groups with cyclic Sylow $p$-subgroups. Whilst, for arbitrary $G$, Theorem \ref{nonvanishing} states that the permutation pairing is not the zero pairing (unless $P(G)=0$). 

This leaves many open questions. For example:
\begin{question}\label{classificationquest}
	Can one describe the groups for which the permutation pairing is degenerate?
\end{question}
It would also be interesting to know of the existence of a group with degenerate permutation pairing but for which the regulator constant pairing has trivial left kernel. There are also many hard problems which arise from considering these pairings integrally.
\subsection{Regulator constants as invariants of permutation modules}\label{reginv}
The non-degeneracy of the permutation pairing is a measure of the strength of regulator constants as invariants of permutation modules. In this section, we show that the isomorphism class of an arbitrary $\z_p[G]$-permutation module is determined by the isomorphism class of its extension of scalars to $\q_p$ and regulator constants if and only if the permutation pairing is non-degenerate. 
\begin{constr}
Let $P''(G)$ denote the free $\q$-vector space on conjugacy classes of cyclic subgroups. Artin's induction theorem (Thm.\ \ref{ait}) states that there is a canonical isomorphism $P''(G) \overset{\sim}{\to} A(\q[G])$ sending $H \to \one_{\q,H} \up^G$. In the same way, $A(\z_p[G],\cyc)$ is also canonically identified with $P''(G)$. Define a pairing
	\begin{align*} \la \ \, , \ \ra_\char \colon P''(G)\ti P''(G)&\longrightarrow \q \\
	( H,K) & \longmapsto \la \one_{\z_p,H}\up^G\ot \q_p , \one_{\z_p,K} \up ^G \ot \q_p \ra _\char , \end{align*}
	where the final inner product is the usual pairing given by character theory. Then, $\la \ \, , \ \ra_\char$ is symmetric and is non-degenerate by Artin's induction theorem.
\end{constr}
\begin{constr}	Let $P'(G)$ denote the free $\q$-vector space on conjugacy classes of $p$-hypo-elementary subgroups of $G$. We define the pairing
	\begin{align*} \la \ \, , \ \ra_* \colon P'(G)\ti P'(G) &\longrightarrow \q \\
	(H,K) &\longmapsto \begin{cases}
\la \one_{\z_p,H}\up^G\ot \q_p , \one_{\z_p,K} \up ^G \ot \q_p \ra _\char &   \text{if $H$ is cyclic}\\
	v_p(C_{\theta_H}(\one_{\z_p,K}\up ^G)) & \text{if $H$ is non-cyclic}
	\end{cases} . \end{align*}
 This extends both $\la \ \, , \ \ra_\perm$ and $\la \ \, , \ \ra_\char$.
\end{constr}
\begin{remark}\label{extension}
	The pairing $\la \ \, , \ \ra_*$ is chosen so that, via the identification $P'(G) \cong A(\z_p[G],\perm)$, in the second variable, the construction extends to a pairing $P'(G)\ti A(\z_p[G])\to \q$ on the full representation ring (cf.\ Remark \ref{regvsspecies}).
\end{remark}
\begin{lemma}\label{tsdet}For any finite group $G$, the following are equivalent,
	\begin{enumerate}[i)]
		\item the permutation pairing of $G$ is non-degenerate,
		\item the pairing $\la \ \, , \ \ra_*$ is non-degenerate,
		\item the isomorphism class of an arbitrary permutation module over $\z_p$ is determined by
		\begin{enumerate}[a)]
			\item the isomorphism class of $M\ot \q_p$, and
			\item the valuations of the regulator constants $v_p(C_{\theta_H}(M))$ as $H$ runs over elements of $\nchyp_p(H)$.
		\end{enumerate}
	\end{enumerate}
\end{lemma}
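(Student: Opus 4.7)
The plan is to establish the equivalences by recognising that (i) and (ii) are related by a block-triangular decomposition of the matrix of $\la \ \, , \ \ra_*$, and that (iii) is a direct restatement of non-degeneracy of $\la \ \, , \ \ra_*$ under the canonical identification $P'(G) \cong A(\z_p[G],\perm)$ from Theorem \ref{dimbrelp} i).

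\textbf{(i) $\Leftrightarrow$ (ii).} Decompose $P'(G) = P''(G) \op P(G)$ along cyclic vs non-cyclic $p$-hypo-elementary subgroups. With respect to this decomposition, I claim the matrix of $\la \ \, , \ \ra_*$ is block upper-triangular: the $(P''(G),P''(G))$-block is the character pairing on $P''(G)$ (non-degenerate by Artin's induction theorem); the $(P(G),P(G))$-block is $\la \ \, , \ \ra_\perm$ by definition; and the $(P(G),P''(G))$-block vanishes. The vanishing is the key computation: for non-cyclic $p$-hypo-elementary $H$ and cyclic $K$, combining Lemma \ref{regcontoolkit} iv) with the Mackey-type expansion \eqref{doublecosetformula} of Remark \ref{permpairformula} yields
\[ v_p(C_{\theta_H}(\one_K\up^G)) = \sum_{g\in H\backslash G/K} v_p(C_{\theta_{H^g\cap K}}(\one_{H^g\cap K})), \]
and each $H^g\cap K$ is cyclic, so Corollary \ref{dimbrel0} ii) forces $\theta_{H^g\cap K}=0$, making every summand zero. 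Non-degeneracy of a block upper-triangular square matrix is equivalent to non-degeneracy of each diagonal block, and since the character-pairing block is always non-degenerate, $\la \ \, , \ \ra_*$ is non-degenerate iff $\la \ \, , \ \ra_\perm$ is.

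\textbf{(ii) $\Leftrightarrow$ (iii).} Theorem \ref{dimbrelp} i) furnishes the basis $\{\one\up^G_H\}_{H\in \hyp_p(G)}$, identifying $A(\z_p[G],\perm)$ canonically with $P'(G)$. Since $a(\z_p[G])$ is free on isomorphism classes of indecomposables (Remark \ref{infrepring}), two $\z_p[G]$-permutation modules $M,N$ are isomorphic iff $[M]=[N]$ in $A(\z_p[G],\perm)\cong P'(G)$. The data (a)+(b) of (iii) is precisely the vector $\bigl(\la H, [M]\ra_*\bigr)_{H\in \hyp_p(G)}$: the isomorphism class of $M\ot \q_p$ is determined by $\la \one_H\up^G\ot \q_p, M\ot \q_p\ra_\char$ for cyclic $H$ (Artin's induction theorem), and for non-cyclic $p$-hypo-elementary $H$ the pairing is $v_p(C_{\theta_H}(M))$ by definition. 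Thus (iii) is equivalent to injectivity on $P'(G)$ of $[M]\mapsto \bigl(\la H, [M]\ra_*\bigr)_H$, i.e.\ to non-degeneracy of $\la \ \, , \ \ra_*$ in the second variable; since this is a pairing of two copies of a finite-dimensional space, this is equivalent to non-degeneracy in the first variable.

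The only conceptually delicate step is the vanishing of the lower-left block: $\la \ \, , \ \ra_*$ is not symmetric (the upper-right block is generically nonzero), so the block-triangular argument depends crucially on the asymmetric definition of $\la \ \, , \ \ra_*$ placing the unavoidable character-vs-regulator interaction in the upper-right block rather than the lower-left.
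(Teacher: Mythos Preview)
Your proof is correct and takes essentially the same approach as the paper: block upper-triangularity for (i) $\Leftrightarrow$ (ii), with the character block always non-degenerate, and unpacking the identification $P'(G)\cong A(\z_p[G],\perm)$ for (ii) $\Leftrightarrow$ (iii). The paper is terser---it cites Remark \ref{indfromcycsgps} directly for the vanishing of the lower-left block (any Brauer relation restricted to a cyclic group is zero) rather than going through the double-coset formula, and dismisses (ii) $\Leftrightarrow$ (iii) as ``automatic''---but the content is the same.
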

\begin{proof}\vspace{-\topsep}
For equivalence of {\it i)} and {\it ii)}, note that, for any cyclic subgroup $K$, $v_p(C_{\theta_H}(\one \up^G_K))=0$ (Remark \ref{indfromcycsgps}). Thus, if we order the canonical basis of $P'(G)$ so that the cyclic subgroups come before the non-cyclic $p$-hypo-elementary subgroups, then the matrix representing $\la \ \, , \ \ra_*$ is block upper triangular, with diagonal blocks given by the matrices representing $\la \ \, , \ \ra_\char$ and the permutation pairing respectively. The former is always invertible so $\la \ \, , \ \ra_*$ is non-degenerate if and only if the permutation pairing is.

The equivalence of {\it ii)} and {\it iii)} is automatic.
\end{proof}
\begin{example}
	Let $G=D_{2p}$. Up to conjugacy, the $p$-hypo-elementary subgroups of $G$ are $S=\{\{1\}, C_2,C_p,D_{2p}\}$. Applying \eqref{oneform} of p\pageref{oneform} to $\theta_G$ as calculated in Example \ref{artinrelcyclic}, we find $v_p(C_{\theta_G}(\one_G))=-1/2$. Thus, the matrix representing $\la \ \, , \ \ra_*$ with respect to the basis of $P'(G)$ given by $S$ is:
	\begin{figure}[H]
	\begin{tikzpicture}[
	every left delimiter/.style={xshift=.5em},
	every right delimiter/.style={xshift=-.5em},
	]
	
	\matrix [ matrix of math nodes,left delimiter={( },right delimiter={ )},row sep=0.1cm,column sep=0.1cm] (U) { 
		2p &p &2& 1\\
		p & (p+1)/2&1&1\\
		2 & 1&2&1\\
		0 &0 &0 &-1/2\\
	};

	\draw (1,1.3) -- (1,-1.2);
	\draw (-2,-0.6) -- (2,-0.6);
	
	\node at (-1.7,1.7) {$\{1\}$};
	\node at (-0.5,1.7) {$C_2$};
	\node at (0.7,1.7) {$C_p$};
	\node at (1.55,1.7) {$D_{2p}$};
	\node[left=10pt  of U-1-1]  {$\{1\}$}; 
	\node[left=14pt  of U-2-1]  {$C_2$}; 
	\node[left=15pt  of U-3-1]  {$C_p$};
	\node[left=14pt  of U-4-1]  {$D_{2p}$};
	\node at (2.4,-1) {.};
	\end{tikzpicture}
\end{figure}\noindent
	In Section \ref{D2pfullex}, we extend this to allow arbitrary $\z_p[D_{2p}]$-lattices.
\end{example}

\section{Non-degeneracy of the permutation pairing for groups with cyclic Sylow $p$-subgroups}\label{regconstinv}
 In this section we prove:
\begin{thm}\label{tspairingnondeg}
	Let $G$ be a finite group and $p$ a prime such that $G$ has cyclic Sylow $p$-subgroups. Then, the permutation pairing
	\[ v_p(C_{(-)}(-)) \colon \BR_0(G)/\BR_p(G) \ti A(\z_p[G],\perm)/A(\z_p[G],\cyc) \to \q   \]
	is non-degenerate.
\end{thm}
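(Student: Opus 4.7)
The plan is to follow the roadmap sketched in the introduction: reduce to the $p$-hypo-elementary case, make the matrix of the pairing fully explicit there, and then reduce invertibility to a combinatorial lemma about GCD-type matrices.

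\textbf{Step 1 (reduction to $p$-hypo-elementary).} By Lemma \ref{permredtophypo}\,\textit{iii)}, non-degeneracy of the permutation pairing for $G$ follows from non-degeneracy for each $p$-hypo-elementary subgroup $H\le G$. Since cyclic Sylow $p$-subgroups are inherited by subgroups, every such $H$ is of the form $C_{p^k}\rti C_n$ with $\gcd(p,n)=1$. Hence we may assume $G$ itself has this shape.

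\textbf{Step 2 (explicit formula for the matrix).} For any subgroup $G'=C_{p^i}\rti C_m\le G$, Example \ref{artinrelcyclic} gives an explicit four-term expression for the Artin relation $\theta_{G'}$ in terms of $C_m$, $C_{p^i}$, and the kernel $S'$ of $C_m\to\Aut(C_{p^i})$. Combining this with the evaluation formula \eqref{oneform} yields a closed-form rational expression for $v_p(C_{\theta_{G'}}(\one_{G'}))$ in terms of $i$, $m$, and $|S'|$. The entries of the permutation pairing then follow from the double coset expansion
\[ \la H,K\ra_\perm = \sum_{g\in H\bac G/K} v_p\bigl(C_{\theta_{H^g\cap K}}(\one_{H^g\cap K})\bigr), \]
where, by Remark \ref{indfromcycsgps}, only those cosets for which $H^g\cap K$ is non-cyclic contribute.

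\textbf{Step 3 (combinatorics of invertibility).} Since $\gcd(p,n)=1$, Schur--Zassenhaus implies that every subgroup of $G$ is of the form $C_{p^j}\rti M$ with $0\le j\le k$ and $M$ cyclic of order dividing $n$; non-cyclicity is equivalent to $M$ acting non-trivially on $C_{p^j}$. Indexing the natural basis of $P(G)$ by such pairs $(j,M)$ and ordering them lexicographically (first by $j$, then by the divisor $|M|$ of $n$), the pairing matrix acquires a block structure. The diagonal blocks, after substituting the evaluations from Step 2, become matrices whose entries depend only on greatest common divisors and divisibility relations among the orders of the various $M$'s. Invertibility of each such block is then a purely combinatorial statement, in the spirit of Smith's classical $\gcd$-determinant identity, which is the content of Lemma \ref{gcdmatinvertible}.

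\textbf{Main obstacle.} The bulk of the work lies in Step 2: systematically cataloguing the conjugacy classes of subgroups of $G=C_{p^k}\rti C_n$, computing the double coset representatives, and identifying which intersections $H^g\cap K$ remain non-cyclic so as to write down the matrix in closed form. After the matrix has been explicated, Step 3 reduces to a self-contained combinatorial invertibility problem, and this GCD-type determinant computation is the remaining technical hurdle.
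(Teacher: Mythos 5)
Your proposal follows the paper's proof essentially step for step: reduce to the $p$-hypo-elementary case via Lemma \ref{permredtophypo}, expand the Artin relations via the double coset formula, substitute the closed-form value of $v_p(C_{\theta_{G'}}(\one_{G'}))$ from Example \ref{artinrelcyclic}/\eqref{oneform}, and then reduce to a GCD-matrix determinant, which is precisely Lemma \ref{gcdmatinvertible}. The paper also carries out the double coset computation you flag as the ``main obstacle'' (that is Lemma \ref{doublecosets}), arriving at the explicit entry
\[ \la H',K'\ra_\perm = \frac{|C_n|\,\min\{e,f\}}{|H||K|}\bigl(|H\cap K\cap S|-|H\cap K|\bigr), \]
which after a scaling of rows and columns becomes $\min\{e,f\}\bigl(\gcd(|H|,|K|,|S|)-\gcd(|H|,|K|)\bigr)$.

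One imprecision in your Step 3 is worth flagging: with the lexicographic indexing by $(j,M)$ that you propose, the pairing matrix is \emph{not} block triangular, so ``invertibility of each such block'' does not on its own finish the argument. What actually happens (and what the paper uses) is that after the scaling, the matrix factors as a Kronecker product $-Q(r)\otimes M(n,s)$, where $Q(r)$ is the $r\times r$ matrix with $(i,j)$-entry $\min\{i,j\}$ and $M(n,s)$ is the GCD-difference matrix. In this presentation the $(j_1,j_2)$-block is $\min\{j_1,j_2\}\cdot M(n,s)$, which is nonzero off the diagonal. The non-degeneracy then comes from the determinant formula for tensor products, $\det(A\otimes B)=\det(A)^{\dim B}\det(B)^{\dim A}$, together with the observation that $Q(r)$ is trivially of full rank (it is the Gram-type matrix with an obvious LU-decomposition) and $M(n,s)$ has full rank by Lemma \ref{gcdmatinvertible}. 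So in addition to the GCD lemma you also need to record the tensor structure and the trivial invertibility of the ``min'' factor; once that is done, your roadmap matches the paper's proof.
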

As a result, for such groups $G$, the regulator constant pairing has trivial left kernel and we find:
\begin{corol}\label{cyctsdet}
	Let $G$ be a finite group and $p$ a prime for which the Sylow $p$-subgroups of $G$ are cyclic. Then, the isomorphism class of an arbitrary $\z_p[G]$-permutation module $M$ is determined by,
	\begin{enumerate}[i)]
		\item the isomorphism class of $M\ot \q_p$,
		\item the valuations of the regulator constants $v_p(C_{\theta_H}(M))$ as $H$ runs over elements of $\nchyp_p(H)$.
	\end{enumerate}
\end{corol}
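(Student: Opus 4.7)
The plan is to deduce this corollary directly from Theorem \ref{tspairingnondeg} via the formal equivalence already established in Lemma \ref{tsdet}. That lemma shows that non-degeneracy of the permutation pairing on $P(G)$ is equivalent to non-degeneracy of the enlarged pairing $\la \ , \ \ra_*$ on $P'(G)$, which in turn is equivalent to the statement that any $\z_p[G]$-permutation module is determined up to isomorphism by $M\ot \q_p$ and the regulator constant valuations $v_p(C_{\theta_H}(M))$ as $H$ ranges over the non-cyclic $p$-hypo-elementary subgroups. Theorem \ref{tspairingnondeg} supplies exactly the non-degeneracy hypothesis under the cyclic Sylow assumption, so the conclusion is immediate.

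Unpacking this slightly, the matrix of $\la \ , \ \ra_*$ with respect to the canonical basis of $P'(G)$, ordered with cyclic classes first, is block upper triangular. The upper-left block is the character pairing $\la \ , \ \ra_\char$, non-degenerate by Artin's induction theorem, and the lower-right block is the permutation pairing, non-degenerate by Theorem \ref{tspairingnondeg}. Hence $\la \ , \ \ra_*$ is non-degenerate. This means that the image of a permutation module $M$ in $P'(G)\cong A(\z_p[G],\perm)$ is uniquely recovered from the pairings $\la H , M\ra$ as $H$ runs over conjugacy classes of $p$-hypo-elementary subgroups. Pairing against cyclic $H$ recovers character-theoretic information depending only on $M\ot \q_p$, while pairing against non-cyclic $p$-hypo-elementary $H$ recovers $v_p(C_{\theta_H}(M))$. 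Krull--Schmidt for $\z_p[G]$-lattices (Remark \ref{infrepring}) then lets us pass from the class of $M$ in $A(\z_p[G],\perm)$ back to its isomorphism class.

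The main obstacle is Theorem \ref{tspairingnondeg} itself, which is the substantive content of this section and requires a serious combinatorial argument after reduction to $p$-hypo-elementary groups; granting it, the corollary is essentially a bookkeeping consequence of Lemma \ref{tsdet}, requiring no further input.
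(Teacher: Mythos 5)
Your proposal matches the paper's argument exactly: the corollary is deduced from Theorem \ref{tspairingnondeg} via the equivalence in Lemma \ref{tsdet}, and your "unpacking" simply reproduces the block-triangular argument already contained in the proof of that lemma. No discrepancies.
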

\begin{proof}
This follows by Lemma \ref{tsdet}.
\end{proof}
The proof of Theorem \ref{tspairingnondeg} reduces to the case of $G$ $p$-hypo-elementary. Since all $p$-hypo-elementary groups with cyclic Sylow $p$-subgroups are of the form $C_{p^k}\rti C_n$ with $(n,p)=1$ we can then perform an explicit calculation.
\subsection{GCD matrices}
We first state and prove a purely combinatorial statement. Since this may be of limited independent interest this subsection is self contained.

\begin{notat}\label{matrixnotat}
	For a natural number $n$ and divisor $s$ of $n$, denote by
	\begin{itemize}
		\item $D'(n)$ the set of divisors of $n$ (ordered increasingly),
		\item $D(n,s)\subset D'(n)$ the set of divisors of $n$ not dividing $s$,
		\item $N(n)$ the symmetric matrix with rows and columns indexed by elements of $D'(n)$ and \linebreak[4] $(d_1,d_2)$\th entry given by $\gcd(d_1,d_2)$,
		\item $M(n,s)$ the symmetric matrix with rows and columns indexed by elements of $D(n,s)$ and $(d_1,d_2)$\th entry given by $(\gcd(d_1,d_2)-\gcd(d_1,d_2,s))$.
	\end{itemize}
\end{notat}
\begin{example}\label{12,2}
	If $n=12$ and $s=2$ then $D(12,2)=\{ 3,4,6,12  \}$ and 
	\[M(12,2) = \begin{blockarray}{ccccc}
	&3&4& 6&12\\
	\begin{block}{c(cccc@{\hspace*{5pt}})}
	3 & \hspace{2pt}2 &0&2 &2\hspace{2pt}\\		
	4&\hspace{2pt}0& 2&0&2\hspace{2pt}\\
	6 &\hspace{2pt}2&0 & 4& 4\hspace{2pt}\\
	12&\hspace{2pt}2 & 2 &4 &10\hspace{2pt}\\
	\end{block}
	\end{blockarray}\ ,\]
	which has full rank.
\end{example}
\begin{remark}
	Matrices of the form $N(n)$ are called GCD matrices and are always invertible (not necessarily integrally, see Lemma \ref{gcdmatrixeasy}). Although matrices defined in a similar way to $M(n,s)$ have been studied (see \cite{GCD1,GCD2}),  we have been unable to find results in the literature that directly cover matrices of the form $M(n,s)$. For this reason, we have included a full calculation of their determinants and thus invertibility. First we recall the proof of the determinant formula for $N(n)$.
\end{remark}
\begin{lemma}\label{gcdmatrixeasy}
	For any natural number $n$, the matrix $N(n)$ has determinant $\prod_{d \in D'(n)} \varphi(d)$, where $\varphi$ denotes Euler's totient function, and thus is always of full rank.
\end{lemma}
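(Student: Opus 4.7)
The plan is to exploit the classical identity
\[ m = \sum_{e \mid m} \varphi(e), \]
applied with $m = \gcd(d_1,d_2)$. Since a positive integer $e$ divides $\gcd(d_1,d_2)$ if and only if it divides both $d_1$ and $d_2$, this identity rewrites the $(d_1,d_2)$-entry of $N(n)$ as
\[ \gcd(d_1,d_2) \;=\; \sum_{\substack{e \in D'(n)\\ e \mid d_1,\; e \mid d_2}} \varphi(e). \]
This is precisely the formula for the entries of a product $A \Phi A^{T}$, where $\Phi$ is the diagonal matrix with entries $\varphi(e)$ for $e \in D'(n)$, and $A$ is the square matrix indexed by $D'(n)\times D'(n)$ whose $(d,e)$-entry is $1$ if $e \mid d$ and $0$ otherwise.

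Next I would choose an ordering of $D'(n)$ that refines the divisibility partial order (for instance, any linear extension; the increasing order already works, since $e \mid d$ forces $e \le d$). With respect to such an ordering, $A$ is lower-triangular with $1$'s on the diagonal, so $\det A = 1$. Taking determinants in $N(n) = A \Phi A^{T}$ then yields
\[ \det N(n) \;=\; \det(A)\cdot \det(\Phi) \cdot \det(A^{T}) \;=\; \prod_{d \in D'(n)} \varphi(d), \]
which is a positive integer, establishing in particular that $N(n)$ has full rank.

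There is no real obstacle here: the entire argument rests on the single observation that $\gcd(d_1,d_2)$ admits the convolution expression above, which is just Möbius inversion applied to the identity $\sum_{e \mid m}\varphi(e) = m$. The subtler companion statement about $M(n,s)$ in Example \ref{12,2} does not reduce to this directly, since subtracting the $\gcd(d_1,d_2,s)$ term corresponds to removing the divisors of $s$ from the sum, which breaks the clean $A \Phi A^{T}$ factorisation; that case is the one that will require genuine combinatorial work.
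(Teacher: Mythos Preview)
Your argument is correct and is in fact the classical Smith determinant proof: the factorisation $N(n)=A\Phi A^{T}$ via $\gcd(d_1,d_2)=\sum_{e\mid d_1,\,e\mid d_2}\varphi(e)$ gives the result in one stroke.

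The paper proceeds differently. It first treats prime powers $n=p^e$ by a direct cofactor expansion along the last column, giving $\det N(p^e)=\varphi(p^e)\det N(p^{e-1})$. Then for coprime $r,t$ it observes that, after reordering via the bijection $D'(rt)\leftrightarrow D'(r)\times D'(t)$, one has $N(rt)=N(r)\otimes N(t)$, and applies the tensor determinant formula $\det(A\otimes B)=\det(A)^{\dim B}\det(B)^{\dim A}$. Your route is shorter and more conceptual for this lemma in isolation; the paper's route, however, is chosen with the next lemma in mind, where the matrices $M(n,s)$ are analysed by peeling off one prime of $s$ at a time and recognising tensor-product blocks such as $N(p^e)\otimes M(n,s)$ inside $M(p^r n,p^e s)$. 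So the tensor decomposition is not incidental machinery but the template for the harder inductive argument, whereas your $A\Phi A^{T}$ factorisation, as you rightly note, does not survive the subtraction of $\gcd(d_1,d_2,s)$.
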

\begin{proof}
	For $n=p^e$ a prime power,
	\[N(p^e) = \left(\begin{array}{c c c c}
	1 & 1 & ...&1 \\
	1 & p & ... & p \\
	\vdots & \vdots & & \vdots\\
	1 & p & ... & p^e
	\end{array}\right).\]
 If $e>1$, expanding the final column shows that $\det(N(p^e))=(p^e-p^{e-1})\det(N(p^{e-1}))=\varphi(p^e)\det(N(p^{e-1})),$ as any $(e-1)\ti(e-1)$ minor containing the first $(e-1)$ terms of the last two rows is not of
 full rank. Inductively this shows the determinant formula for prime powers.
	
	Now let $s=rt$ with $(r,t)=1$. Then, using the bijection $D'(rt)\leftrightarrow D'(r)\ti D'(t)$, after simultaneous permutation of rows and columns (which preserves the determinant) $N(s)$ is of the form:
		\begin{figure}[H]
		\begin{tikzpicture}[
		every left delimiter/.style={xshift=.5em},
		every right delimiter/.style={xshift=-.5em},
		]
		\node at (-6.2,0) {$N(s)=$};
		\matrix [ matrix of math nodes,left delimiter={( },right delimiter={ )},row sep=0.1cm,column sep=0.1cm] (U) { 
 \gcd(u_1,u_1)N(r) &\gcd(u_1,u_2)N(r)& ... &\gcd(u_1,u_k)N(r)\\
\gcd(u_2,u_1)N(r)& \gcd(u_2,u_2)N(r)&...&\gcd(u_2,u_k)N(r)\\
\vdots&\vdots & \ddots& \vdots \vspace{1cm}\\
\gcd(u_k,u_1)N(r) & \gcd(u_k,u_2)N(r) &... &\gcd(u_k,u_k)N(r)\\
		};

				\node at (6,0) {$=N(r)\ot N(t)$,};

		\draw (U-1-1.south west) -- (U-1-4.south east);
		\draw (U-2-1.south west) -- (U-2-4.south east);
		\draw (U-4-1.north west) -- (U-4-4.north east);
		
		\draw (U-1-1.north east) -- (U-4-1.south east);
		\draw (U-1-2.north east) -- (U-4-2.south east);
		\draw (U-1-4.north west) -- (U-4-4.south west);
		
		\node[above=0pt  of U-1-1]  {$u_1$}; 	
		\node[above=0pt  of U-1-2]  {$u_2$}; 
		\node[above=12pt  of U-1-3]  {$\dots$}; 
		\node[above=0pt  of U-1-4]  {$u_k$}; 
		
		\node[left=10pt  of U-1-1]  {$u_1$}; 
		\node[left=10pt  of U-2-1]  {$u_2$}; 
		\node[left=47pt  of U-3-1]  {$\vdots$};
		\node[left=10pt  of U-4-1]  {$u_k$};
		
		\end{tikzpicture}
	\end{figure}\noindent
	writing $u_i$ for the elements of $D'(t)$. If $A,B$ are matrices of dimension $m,n$ respectively, then their tensor product satisfies the familiar formula
	\[\det(A\ot B)=\det(A)^n\det(B)^m.\]
	Applying this inductively, using the bijection $D'(rt)\leftrightarrow D'(r)\ti D'(t)$,	
	\begin{align*}
	\det(N(r)\ot N(t))&=\left(\prod_{d\in D'(r)}\hspace{-3pt}\varphi(d)\right)^{|D'(t)|}\cdot\left(\prod_{l\in D'(t)}\hspace{-3pt}\varphi(l)\right)^{|D'(r)|}\\
	&= \prod_{d \in D'(r)} \left( \varphi(d)^{|D'(t)|}\prod_{l\in D'(t)}\hspace{-3pt} \varphi (l)\right)\\
	&=\prod_{d \in D'(r)}\prod_{l \in D'(t)}\hspace{-3pt}\varphi(d)\varphi(l)\\
	&=\prod_{w \in D'(rt)}\hspace{-5pt}\varphi(w),
	\end{align*}
	as required.
\end{proof}

\begin{lemma}
	\label{gcdmatinvertible}
	The matrix $M(n,s)$ has full rank for all natural numbers $n$ and divisors $s$ of $n$. Moreover, $\det(M(n,s))=\prod_{d \in D(n,s)} \varphi(d)$, where $\varphi$ is the Euler totient function.
\end{lemma}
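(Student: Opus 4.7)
The plan is to realise $M(n,s)$ as a product $A D A^{T}$ with $A$ lower triangular and $D$ diagonal, using the classical identity $n=\sum_{d\mid n}\varphi(d)$; the determinant formula then drops out immediately.

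First, I would rewrite the entries using Euler's totient identity. For any positive integers $a,b$, one has
\[\gcd(a,b)\;=\;\sum_{e\mid\gcd(a,b)}\varphi(e)\;=\;\sum_{\substack{e\mid a\\ e\mid b}}\varphi(e),\]
so that
\[(M(n,s))_{d_1,d_2}\;=\;\gcd(d_1,d_2)-\gcd(d_1,d_2,s)\;=\;\sum_{\substack{e\mid d_1,\, e\mid d_2\\ e\,\nmid\, s}}\varphi(e).\]
The key observation is that the index $e$ in this sum already lies in $D(n,s)$: since $e\mid d_1\mid n$ we have $e\mid n$, and by hypothesis $e\nmid s$.

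Next, let $A$ be the square $0/1$ matrix with rows and columns indexed by $D(n,s)$ defined by $A_{d,e}=1$ if $e\mid d$ and $A_{d,e}=0$ otherwise, and let $D=\mathrm{diag}(\varphi(e))_{e\in D(n,s)}$. The computation above yields
\[(A\,D\,A^{T})_{d_1,d_2}=\sum_{e\in D(n,s)}A_{d_1,e}\,\varphi(e)\,A_{d_2,e}=\sum_{\substack{e\in D(n,s)\\ e\mid d_1,\,e\mid d_2}}\varphi(e)=(M(n,s))_{d_1,d_2}.\]
Hence $M(n,s)=A\,D\,A^{T}$.

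Finally, I would order the divisors in $D(n,s)$ by increasing magnitude (any total order refining divisibility works). If $e>d$ then $e\nmid d$, so $A_{d,e}=0$; and $A_{d,d}=1$. Thus $A$ is lower triangular with unit diagonal and $\det(A)=1$. Therefore
\[\det(M(n,s))=\det(A)\det(D)\det(A^{T})=\prod_{d\in D(n,s)}\varphi(d),\]
which is strictly positive, so $M(n,s)$ has full rank. There is essentially no obstacle here beyond correctly identifying that the range of summation for $e$ coincides with $D(n,s)$; once this is seen, the Möbius-style factorisation produces both the determinant formula and invertibility simultaneously, mirroring the classical argument for GCD matrices recalled in Lemma \ref{gcdmatrixeasy}.
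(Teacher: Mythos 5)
Your proof is correct and genuinely different from the paper's. The paper handles $M(n,s)$ by first proving the case $s=1$ via reduction to the ordinary GCD matrix $N(n)$, and then inducting on the number of prime factors of $s$: it partitions $D(p^r n, p^e s)$ into two blocks, performs explicit row reduction to kill the off-diagonal block, and repeatedly applies the tensor-product determinant formula $\det(A\otimes B)=\det(A)^n\det(B)^m$. Your argument, by contrast, is a direct Smith-style factorisation $M(n,s)=A\,D\,A^{T}$ obtained by noticing that $\gcd(d_1,d_2)-\gcd(d_1,d_2,s)=\sum_{e\mid d_1,\ e\mid d_2,\ e\nmid s}\varphi(e)$, and that the index $e$ of this sum necessarily ranges over $D(n,s)$ itself (since $e\mid d_1\mid n$). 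All the steps check out: $A$ is lower unitriangular in the size order, so $\det A=1$ and the determinant formula and full rank both follow at once. This buys a one-shot, induction-free proof that is shorter than both of the paper's Lemmas \ref{gcdmatrixeasy} and \ref{gcdmatinvertible} combined; the price is only that one must spot the key observation that the summation index set for the totient identity automatically coincides with $D(n,s)$, which is precisely what makes the factorisation close up cleanly. It is essentially the classical Smith determinant argument for GCD matrices, transported to the restricted index set $D(n,s)$, whereas the paper never uses this factorisation even in the unrestricted case.
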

\begin{proof}
	We first prove the case when $s=1$. Consider the matrix $N(n)$ (whose determinant equals $\prod_{d\in D'(n)} \varphi(d)$ by Lemma \ref{gcdmatrixeasy}). Within $N(n)$, the first row and column are constantly $1$, and if we subtract the first column from all subsequent columns we get
			\begin{figure}[H]
		\begin{tikzpicture}[
		every left delimiter/.style={xshift=.5em},
		every right delimiter/.style={xshift=-.5em},
		]
		\node at (-3.1,0) {$\det(N(n))= \det$};
		\matrix [ matrix of math nodes,left delimiter={( },right delimiter={ )},row sep=0.1cm,column sep=0.1cm] (U) { 
	1 &0 &...& 0\\
1& & & \\
\vdots & & M(n,1)& \\
1& &&\\
		};

		\node at (3,0) {$= \det(M(n,1))$.};

		\draw (U-1-1.south west) -- (U-1-4.south east);
		\draw (U-1-1.north east) -- (U-4-1.south east);

		\end{tikzpicture}
	\end{figure}\noindent 
	As $\varphi(1)=1$, this verifies the determinant formula in the case of $s=1$.
	
	We proceed by induction on the number of prime divisors of $s$. Assume that $M(n,s)$ has determinant 
	\[\det(M(n,s))=\prod_{d \in D(n,s)} \varphi(d),\]
	and consider $M(p^rn,p^es)$ with $p\nmid n,s$.
	
	Let $d$ be a divisor of $p^rn$, so $d$ is of the form $p^kd'$ with $p\nmid d'$ and $0\le k \le r$. Then, $d \in D(p^rn,p^es)\iff$ either $k \le e$ and $d'\in D(n,s)$, or $k>e$ and $d' \in D'(n)$. In other words, $D(p^rn,p^es)$ can be partitioned as
	\[D(p^rn,p^es) = \left( \bigcup_{i=0}^e p^i D(n,s)\right) \cup \left( \bigcup_{i={e+1}}^r p^i D'(n)\right).\]
	Call $D_1= \bigcup_{i=0}^e p^i D(n,s)$ and $D_2= \bigcup_{i=p^{e+1}}^r p^i D'(n)$. Simultaneously reorder the rows and columns of $M(p^rn,p^es)$ so that they respect this decomposition. Define $A,B,C$ by
			\begin{figure}[H]
		\begin{tikzpicture}[
		every left delimiter/.style={xshift=.75em},
		every right delimiter/.style={xshift=-.5em},
		]
		\node at (-3,0) {$M(p^rn,p^es)=$};
		\matrix [ matrix of math nodes,left delimiter={( },right delimiter={ )},row sep=0.1cm,column sep=0.1cm] (U) { 
	A &C \\
	C^T \hspace{-3pt} & B\\
		};

		\node at (1,-0.5) {.};
		
		\draw (U-1-1.south west) -- (U-1-2.south east);
		\draw (U-1-2.north west) -- (U-2-2.south west);

		\node[above=0pt  of U-1-1]  {$\hspace{2pt} D_1$}; 	
		\node[above=0pt  of U-1-2]  {$D_2$}; 
		
		\node[left=7pt  of U-1-1]  {$D_1$}; 
		\node[left=7pt  of U-2-1]  {$D_2$}; 
		
		\end{tikzpicture}
	\end{figure}\noindent
	For any two elements $p^{l_1}d_1,p^{l_2}d_2\in D_1$, the corresponding entry of $A$ is given by
	\[\gcd(p^{l_1}d_1,p^{l_2}d_2)-\gcd(p^{l_1}d_1,p^{l_2}d_2,p^es)=p^{\min\{l_1,l_2\}}(\gcd(d_1,d_2)-\gcd(d_1,d_2,s)).\]
	So $A$ is the tensor product
	\[A= \left(\begin{array}{c c c c}
	1 & 1 & ...& 1\\
	1 & p & ...&p\\
	\vdots & \vdots & & \vdots\\
	1 & p & ... & p^{e}\end{array}\right)\ot M(n,s)=  N(p^e)\otimes M(n,s),\]
	which has determinant
	\[ \det(A)=\det(N(p^e))^{|D(n,s)|}\cdot \det(M(n,s))^{e+1}.\]
	By induction and Lemma \ref{gcdmatrixeasy},
	\begin{align*}\det(A)&=\det(N(p^e))^{|D(n,s)|}\cdot \det(M(n,s))^{e+1}\\
	&=\left(\prod_{k=0}^e \varphi(p^k)^{|D(n,s)|}\right)\cdot\left(\prod_{d\in D(n,s)} \varphi(d)^{e+1}\right)\\
	&=\prod_{k=0}^e \left(\varphi(p^k)^{|D(n,s)|}\cdot\prod_{d \in D(n,s)}\varphi(d)\right)\\
	&=\prod_{k=0}^e\prod_{d\in D(n,s)} \varphi(p^k)\varphi(n)\\
	&=\prod_{d \in D_1} \varphi(d).
	\end{align*}
	
	We now row reduce to remove $C^T$. For $e<k \le r$, let $v_{p^kd}$ denote the row vector with entries indexed by $D(n,s)=D_1\cup D_2$ whose $t$\textsuperscript{th} entry is defined by
	\[ (v_{p^kd})_t=\gcd(p^ed,t)- \gcd(p^ed,t,p^es).  \]
	If $d \mid s$, then $\gcd(p^ed,t)=\gcd(p^ed,t,p^es)$ and $v_{p^kd}$ is identically zero. If $d \nmid s$, then $p^ed\in D_1$ and $v_{p^kd}$ is the $(p^ed)$\th row of the matrix $M(n,s)$. In either case, subtracting $v_{p^kd}$ from the $(p^kd)$\th row is an elementary row operation and preserves the rank and determinant.
	
	Call $M'(p^rn,p^es)$ the matrix resulting from performing this reduction for all elements of $D_2$. The entries of the $p^kd$\th row for $p^kd \in D_2$ now satisfy, for $p^{k'}d'\in D_1$,
	\begin{align*}M'(p^rn,p^es)_{p^kd,p^{k'}d'}&=\gcd(p^kd,p^{k'}d')-\gcd(p^kd,p^{k'}d',p^es)-\gcd(p^ed,p^{k'}d')+\gcd(p^ed,p^{k'}d',p^es)\\
	&=p^{k'}\gcd(d,d')-p^{k'}\gcd(d,d',s)-p^{k'}\gcd(d,d')+p^{k'}\gcd(d,d',s)\\
	&=0,\end{align*}
	and for $p^{k'}d'\in D_2$,
	\begin{align*}
	M'(p^rn,p^es)_{p^kd,p^{k'}d'}&=\gcd(p^kd,p^{k'}d')-\gcd(p^kd,p^{k'}d',p^es)-\gcd(p^ed,p^{k'}d')+\gcd(p^ed,p^{k'}d',p^es)\\
	&=p^{\min\{k,k'\}}\gcd(d,d')-p^e\gcd(d,d',e)-p^e\gcd(d,d')+p^e\gcd(d,d',s)\\
	&=(p^{\min\{k,k'\}}-p^e)\gcd(d,d').
	\end{align*}
	Therefore, the row reduction results in a matrix of the form
				\begin{figure}[H]
		\begin{tikzpicture}[
		every left delimiter/.style={xshift=.75em},
		every right delimiter/.style={xshift=-.75em},
		]
		\node at (-3,0) {$M'(p^rn,p^es)=$};
		\matrix [ matrix of math nodes,left delimiter={( },right delimiter={ )},row sep=0.1cm,column sep=0.1cm] (U) { 
			A &C \\
			0 \hspace{-3pt} & B'\\
		};

		\node at (1,-0.5) {.};
		
		\draw (U-1-1.south west) -- (U-1-2.south east);
		\draw (0,0.6) -- (0,-0.6);

		\node[above=0pt  of U-1-1]  {$\hspace{2pt} D_1$}; 	
		\node[above=0pt  of U-1-2]  {$D_2$}; 
		
		\node[left=4pt  of U-1-1]  {$D_1$}; 
		\node[left=7pt  of U-2-1]  {$D_2$}; 
		
		\end{tikzpicture}
	\end{figure}\noindent
	where
	\begin{align*}B'&=N(n)\otimes \left(\begin{array}{c c c c}
	p^{e+1}-p^e & p^{e+1} -p^e& ...& p^{e+1}-p^e\\
	p^{e+1}-p^e & p^{e+2} -p^e & ...&p^{e+2} -p^e\\
	\vdots & \vdots & & \vdots\\
	p^{e+1}-p^e  & p^{e+2}-p^e  & ... & p^{r}-p^e\end{array}\right)\\
	&=N(n)\ot M(p^{r},p^e)\\
	& =N(n) \ot p^{e}M(p^{r-e},1).\end{align*}
Since
	\[ \det(M(p^rn,p^es))=\det(A)\cdot \det(B'),\]
	to complete the proof we must show that $\det(B')=\prod_{d \in D_2} \varphi(d)$. Indeed,
		\begin{align*}
 \det(B')&=\det(N(n))^{r-e}\cdot \det(M(p^r,p^e))^{|D'(n)|}\\
	&=\prod_{k=e+1}^r \varphi(p^k)^{|D'(n)|}\det(N(n))\\
	&=\left(\prod_{k=e+1}^r \varphi(p^k)^{|D'(n)|}\right)\cdot \left(\prod_{d \in D'(n)} \varphi(d)\right)\\
	&=.\prod_{k=e+1}^r \prod_{d \in D'(n)}\varphi(p^kd)\\
	&=\prod_{d \in D_2} \varphi(d).
	\end{align*}
	So we find
	\[\det(M(p^kn,p^es))=\left(\prod_{d \in D_1} \varphi(d) \right)\left( \prod_{d \in D_2} \varphi(d) \right)=\prod_{d \in D} \varphi(d). \]
	This completes the proof of the determinant formula of $M(a,b)$ by induction on the number of prime factors of $b$.
\end{proof}

\subsection{Structure of $C_{p^k}\rtimes C_n$}\label{proofofndegcyc}

We now perform an explicit calculation for $p$-hypo-elementary groups before deducing Theorem \ref{tspairingnondeg}.

\begin{lemma}\label{doublecosets}
	Let $G$ be of the form $C_{p^r}\rti C_n$ with $p \nmid n$. Further, let $S$ denote the kernel of the action of $C_n$ on $C_{p^r}$. Then, for any two subgroups $H',K'\le G$ of the form $H'= C_{p^e}\rti H, K' = C_{p^f}\rti K$ with $H,K\le C_n\le G$, as elements of the Burnside ring $B(K')$,
	\[\coprod_{g \in H' \bac G /K' }\hspace{-5pt}[H'^{g} \cap K']=\frac{|C_n||H\cap K|}{|H||K|} [H'\cap K']+\frac{p^{r-\max\{e,f\}}|C_n||H\cap K \cap S| }{|H||K|}[H'\cap K' \cap (C_{p^r}\ti S)]  .\]
\end{lemma}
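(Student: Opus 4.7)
The plan is to apply Mackey's formula \eqref{mackeyforgsets} directly: the LHS is already $[H']\down_{K'}^G$, so the task is to enumerate a set of double coset representatives for $H'\backslash G/K'$ and identify the intersections $H'^g\cap K'$ as subgroups of $K'$, exploiting the semidirect product structure. I parametrize $g=xh$ with $x\in C_{p^r}$ and $h\in C_n$. Using the identity $(ah_1)\cdot xh\cdot(bk) = \bigl(a\cdot h_1(x)\cdot (h_1h)(b)\bigr)\cdot(h_1 h k)$ in $G$, the double coset $H' g K'$ projects onto $h\cdot HK$ in $C_n$, while the $C_{p^r}$-component can be shifted by $C_{p^e}\cdot(h_1h)(C_{p^f}) = C_{p^{\max\{e,f\}}}$ and twisted by the $H$-action on $x$.

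This suggests a dichotomy based on whether $x\in C_{p^{\max\{e,f\}}}=C_{p^e}C_{p^f}$. If $x\in C_{p^{\max\{e,f\}}}$, I absorb $x$ on the left via $C_{p^e}\subseteq H'$ and, after moving it past $h$, on the right via $C_{p^f}\subseteq K'$, so the double coset is represented by some $g=h\in C_n$. Such representatives are classified by $C_n/HK$, yielding $n|H\cap K|/(|H||K|)$ double cosets; since $h\in C_n$ normalizes both $H'$ and $K'$ (as $C_n$ is abelian and fixes the characteristic subgroup $C_{p^e}$), one has $H'^h\cap K' = H'\cap K'$, producing the first term.

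For $x\notin C_{p^{\max\{e,f\}}}$, a direct calculation in the semidirect product gives, for $a\in C_{p^e}$ and $h_0\in H$,
\[ g^{-1}(ah_0)g \;=\; h^{-1}(a)\cdot h^{-1}\bigl((h_0-1)x\bigr)\cdot h_0, \]
where $(h_0-1)x := h_0(x)\cdot x^{-1}\in C_{p^r}$. Membership in $K'$ forces $h_0\in H\cap K$ and $(h_0-1)x\in C_{p^{\max\{e,f\}}}$. The key observation is that $h_0\in C_n$ acts on $C_{p^r}$ by multiplication by some $\alpha\in(\z/p^r)^\times$ of order coprime to $p$; if $h_0\notin S$ then $\alpha\not\equiv 1\pmod p$, so $\alpha-1$ is a unit in $\z/p^r$ and $(h_0-1)$ is an automorphism of $C_{p^r}$, which would force $x\in C_{p^{\max\{e,f\}}}$, a contradiction. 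Hence only $h_0\in H\cap K\cap S$ contribute, and letting $b=h^{-1}(a)$ range over $C_{p^e}$ with the constraint $b\in C_{p^f}$ collapses the $C_{p^r}$-part to $C_{p^{\min\{e,f\}}}$; thus $H'^g\cap K' = C_{p^{\min\{e,f\}}}\cdot(H\cap K\cap S) = H'\cap K'\cap(C_{p^r}\times S)$ literally as a subgroup of $K'$, not merely up to conjugacy.

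The remaining step is to count the ``large $x$'' double cosets, which I would do by a cardinality argument: the total size of the LHS viewed as a $K'$-set equals $|G/H'|=p^{r-e}n/|H|$, and subtracting the first-term contribution $\tfrac{n|H\cap K|}{|H||K|}\cdot\tfrac{|K'|}{|H'\cap K'|}$ and dividing by the orbit size $|K'|/|H'\cap K'\cap(C_{p^r}\times S)|$ isolates the coefficient. The main obstacle is the bookkeeping: organising the double coset representatives cleanly and verifying that all ``large $x$'' intersections give literally the same subgroup, so that no Burnside-ring conjugacy equivalences need to be tracked, and then matching the resulting arithmetic to the stated coefficient.
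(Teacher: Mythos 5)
Your plan is sound and follows the same route as the paper: both arguments reduce to understanding, for $g=xh$ with $x\in C_{p^r}$ and $h\in C_n$, whether the $C_{p^r}$-component lies in $C_{p^{\max\{e,f\}}}$ (the paper does this by first treating $e=f=0$ and then passing to general $e,f$ via the bijection $H'\backslash G/K'\leftrightarrow H\backslash\bigl((C_{p^r}/C_{p^{\max\{e,f\}}})\rti C_n\bigr)/K$, while you handle general $e,f$ in one go; both are fine). Your identification of $H'^g\cap K'$ in each case is correct, and your key observation — that $h_0\notin S$ makes $h_0-1$ invertible on $C_{p^r}$, forcing $h_0\in H\cap K\cap S$ for a nontrivial intersection — is precisely the argument the paper uses.

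But do not expect the final cardinality check to ``match the stated coefficient''. Carrying through the subtraction you sketch gives
\[ \frac{|G/H'|-\tfrac{n|H\cap K|}{|H||K|}\cdot\tfrac{|K'|}{|H'\cap K'|}}{|K'|\,/\,|H'\cap K'\cap(C_{p^r}\times S)|}\;=\;\bigl(p^{\,r-\max\{e,f\}}-1\bigr)\cdot\frac{n\,|H\cap K\cap S|}{|H||K|}, \]
so the coefficient of the second term should carry a $-1$ that the displayed formula in Lemma \ref{doublecosets} omits. A quick sanity check: with $G=C_p$ and $H'=K'=\{1\}$ the left side is $p\cdot[\{1\}]$ but the formula as written yields $(p+1)\cdot[\{1\}]$. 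The paper's own intermediate sentence reports ``$(p^{\max\{e,f\}}-1)\frac{|C_n||H\cap K\cap S|}{|H||K|}$'' double cosets of the second type — itself a typo for $(p^{\,r-\max\{e,f\}}-1)$, since the reduced group is $C_{p^{r-\max\{e,f\}}}\rti C_n$ — so the $-1$ is present in the derivation and the displayed conclusion is a transcription slip. This is harmless for the application: in the proof of Theorem \ref{tspairingnondeg} only the first term survives, because $H'\cap K'\cap(C_{p^r}\times S)$ is cyclic and its Artin relation is zero. Your proof is fine as an argument; just report the corrected coefficient rather than force your arithmetic to agree with the misprint.
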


\begin{proof}
	First assume $e=f=0$. Elements of $H\ba G/K$ are in bijection with $H$-orbits of cosets $gK$. For such $G$, a set of coset representatives of $G/K$ is given by elements $\sigma\tau_i$, where $\sigma \in C_{p^r}$ and $\{ \tau_i\}$ are a set of coset representatives of $C_n/K$. The stabilizer of a right coset $gK$ under the action of $H$ is given by
	\[\Stab_H(gK)=H \cap gKg^{-1}.\]
	Using that $C_n$ is abelian, for $k\in K$,
	\begin{align*}
	(\sigma \tau_i)k(\sigma\tau_i)^{-1}&=\sigma \tau_i k \tau_i^{-1}\sigma^{-1}=\sigma k \sigma^{-1}\\
	&=\sigma k \sigma^{-1} k^{-1} k=\sigma\phi(k)(\sigma^{-1})k,
	\end{align*}
	where $\phi \colon C_n \to \Aut(C_{p^r})$ denotes the action of conjugation. Since the prime to $p$-part of $\Aut(C_{p^r})$ equals that of $\Aut(C_{p^e})$ for any non-trivial $C_{p^e} \le C_{p^r}$, $k$ acts trivially on $\sigma\neq e$ if and only if $k\in S$. Thus,
	\[\sigma\phi(k)(\sigma^{-1})k \in H \iff k \in H \text{ and } k \in  \begin{cases}K & \text{if } \sigma = e\\
	K \cap S& \text{if } \sigma \neq e\end{cases}.\]
	In particular,
	\[\Stab_H(gK) = \begin{cases}H\cap K & \text{if } g \in C_n\\
	H \cap K \cap S& \text{if } g \not \in C_n\end{cases}.\]
	By orbit--stabiliser theorem, there are $\frac{|C_n||H\cap K|}{|H||K|}$ double cosets $HgK$ of length $\frac{|H||K|}{|H\cap K|}$ and \linebreak $(p^r-1)\linebreak[2]\frac{|C_n||H\cap K \cap S|}{|H| |K|}$ double cosets of length $\frac{|H||K|}{|H\cap K\cap S|}$. Furthermore, as $H$ has a unique subgroup of each order
	\[H^g \cap K =\begin{cases} H\cap K &\text{if } g \in C_n\\
	H\cap K \cap S &\text{else}\end{cases},\]
	and applying Mackey's formula \eqref{mackeyforgsets} p\pageref{mackeyforgsets} gives the desired formula in this case.
		
	Now let $e,f \ge 0$. We first calculate the order of $H'\bac G/K'$. As all $p$-subgroups of $G$ are normal, there are canonical bijections
	\begin{equation*}(C_{p^e}\rti H)\backslash G /(C_{p^f}\rti K)\leftrightarrow H \backslash G/(C_{p^{\max\{e,f\}}}\rti K)\leftrightarrow
	H \backslash ((C_{p^r}/C_{p^{\max\{e,f\}}})\rti C_n)/K.\label{canbij}\end{equation*}
	So, from the first part, we find there are $\frac{|C_n||H\cap K|}{|H||K|}$ double cosets of length $\frac{|H||K|}{|H\cap K|}p^{\max\{e,f\}}$ and $(p^{\max\{e,f\}}-1)\frac{|C_n||H\cap K \cap S|}{|H| |K|}$ double cosets of length $\frac{|H||K|}{|H\cap K\cap S|}p^{\max\{e,f\}}$. Taking preimages,
	\[H'^{g} \cap K' =\begin{cases} H'\cap K' &\text{if } g \in C_n\\
	H'\cap K' \cap(C_{p^r}\ti S) &\text{else}\end{cases}.\]
	
	Therefore, indeed
	\[\coprod_{g \in H' \bac G /K' }\hspace{-5pt}[H'^{g} \cap K']=\frac{|C_n||H\cap K|}{|H||K|} [H'\cap K']+\frac{p^{r-\max\{e,f\}}|C_n||H\cap K \cap S| }{|H||K|}[H'\cap K' \cap (C_{p^r}\ti S)]  .\vspace{-15pt}\]
		
\end{proof}
\begin{notat}\label{centraliser}
	Given a group $G$ and subgroup $H\le G$, we denote by $N_G(H)$ the normaliser of $H$ in $G$ and by $Z_G(H)$ its centraliser. 
\end{notat}
\begin{proof}[Proof of Theorem \ref{tspairingnondeg}] Lemma \ref{permredtophypo} states that the permutation pairing for $G$ is non-degenerate if the pairing is non-degenerate for all $p$-hypo-elementary subgroups. So we shall assume that $G$ is $p$-hypo-elementary, i.e.\ $G\cong C_{p^r}\rti C_n$ with $p \nmid n$.
	
	 For notational convenience, we make a fixed choice of subgroup of $G$ isomorphic to $C_n$, which we also denote by $C_n$. Let $S$ denote the kernel of the map $C_n\to \Aut(C_{p^r})$ defining the semi-direct product. Note that $S$ is also the kernel of the map $C_n\to \Aut(C_{p^k})$ for all $1\le k \le r$. Up to conjugacy, any subgroup of $G$ is of the form $C_{p^k}\rti L$, with $L$ contained in the fixed choice of $C_n$. Moreover, such a subgroup is cyclic and normal in $G$ if and only if $L\le S$.

	Let $H',K'$ be non-cyclic subgroups of $G$. We may assume, by replacing $H',K'$ with conjugate subgroups if necessary, that $H'= C_{p^e}\rti H, K'= C_{p^f}\rti K$ with $H,K\le C_n$. We first calculate $\la H',K'\ra_\perm=v_p(C_{\theta_{H'}}(\one _{K'}\up^G))=v_p(C_{\theta_{H'}\down_{K'}}(\one_{K'}))$. The decomposition of $\theta_{H'}\down_{K'}$ matches that of its leading term (Lemma \ref{mackeyforartrel}), so applying Lemma \ref{doublecosets} we find
		\[ \theta_{H'} \up^G\down _{K'}= \left( \frac{|C_n||H\cap K|}{|H||K|}\right)  \cdot \theta_{H'\cap K'}\up^{K'}+\left( \frac{p^{r-\max\{e,f\}}|C_n||H\cap K \cap S| }{|H||K|}\right) \cdot \theta_{H'\cap K' \cap (C_{p^r}\ti S)}\up^{K'}.  \]
		But $H'\cap K' \cap (C_{p^r}\ti S)$ is cyclic (so that $\theta_{H'\cap K' \cap (C_{p^r}\ti S)}=0$) and we find
		\[ v_p(C_{\theta_{H'}}    (\one_{K'}\up^G) ) = \frac{|C_n||H\cap K|}{|H||K|}v_p(C_{\theta_{H'\cap K'}}(\one_{H'\cap K'})).\]
		
		Let $L'$ be an arbitrary non-cyclic subgroup of the form $C_{p^\ell}\rti L$ with $L \le C_n$. Directly applying \eqref{oneform} to the formula of Example \ref{artinrelcyclic}, or by looking ahead to Example \ref{phyporegconstcalc}, we find that
	\begin{align*}v_p(C_{\theta_{L'}}(\one _{L'}))&=-\ell(1- \frac{|Z_{L'}(C_{p^\ell})|}{|N_{L'}(C_{p^\ell})|})\\
	&=-\ell(1-\frac{|L\cap S|}{|L|}).
	\end{align*}
	Concluding our calculation of $\la H',K'\ra _{\perm}$, we find
		\begin{align} \la H',K'\ra_\perm
	&= \frac{|C_n||H\cap K|}{|H||K|}v_p(C_{\theta_{H'\cap K'}}(\one_{H'\cap K'})) \nonumber\\
	&= \frac{|C_n||H\cap K|}{|H||K|}\min\{e,f\}\left( \frac{|H\cap K \cap S|}{|H\cap K|}-1 \right) \nonumber\\
	&=\frac{|C_n|\min\{e,f\}}{|H||K|}(|H\cap K \cap S|-|H\cap K|). \label{permpairingformula}
	\end{align}

	Let $T$ be the matrix representing the pairing $\la \ \, , \ \ra_\perm$ with respect to the basis of $P(G)$ given by non-cyclic $p$-hypo-elementary subgroups ordered (totally in our case) by size. After a non-zero scaling of the rows and columns of $T$, we obtain a matrix $T'$ with $(H',K')$\th entry
	\[T'_{H',K'}= \min\{e,f\}(|H\cap K \cap S|-|H\cap K |). \]
	Note $T'$ remains symmetric and has the same rank as $T$. Since $C_n$ is cyclic, $|H\cap K \cap S|=\gcd(|H|,|K|,|S|)$ and $|H \cap K | =\gcd(|H|,|K|)$. Thus, $T'$ is the matrix with entries
	\begin{align} 
	T'_{H',K'}&=\min\{e,f\}\left(\gcd(|H|,|K|,|S|)-\gcd(|H|,|K|)\right). \label{sofat}
	\end{align}
	Let $M(m,l)$ be as in Notation \ref{matrixnotat}. If $Q(d)$ denotes the $d\ti d$ matrix with $Q_{i,j}=\min\{i,j\}$, then, by \eqref{sofat}, we may simultaneously permute the rows and columns of $T'$ to get
	\[T' \sim -Q(r) \ot M(n,s),\]
	where $|S|=s$. As $Q(r)$ is manifestly of full rank and Lemma \ref{gcdmatinvertible} states that $M(n,s)$ is also, so the same is true for $T$ and the permutation pairing for $G$ is non-degenerate.
\end{proof}
\begin{example}
	Let $G= C_7\rti C_{12}$. A set of representatives of the non-cyclic conjugacy classes of $G$ is given by
	\[ S:= \{ C_7\rti C_3, C_7\rti C_4, C_7 \rti C_6, C_7\rti C_{12}. \} \] 
	Applying \eqref{permpairingformula}, the matrix $T$ representing the permutation pairing with respect to the basis given by $S$ is given by 
	\[ \left(\begin{array}{c c c c}
-8/3 & 0 & -4/3&-2/3 \\
0 & -3/2 & 0& -1/2 \\
-4/3 & 0 &-4/3 & -2/3\\
-2/3 & -1/2 & -2/3 & -5/6
\end{array}\right).\]
In the notation of the proof of Theorem \ref{tspairingnondeg}, $n=12$ and $s=2$. After rescaling the rows and columns of $T$ as in the proof, we obtain the matrix $M(12,2)$ of Example \ref{12,2}.
\end{example}

\section{Non-vanishing of the Artin regulator constant}\label{nonvanartin}
 In this section, we prove:
\begin{thm}\label{nonvanishing}For any finite group $G$ and prime $p$, $v_p(C_{\theta_G}(\one _G))\neq 0$ if and only if $G$ contains a non-cyclic $p$-hypo-elementary subgroup. If $G$ does contain a non-cyclic $p$-hypo-elementary subgroup then $v_p(C_{\theta_G}(\one_G))\le -p/|G|$. Here, $\one_G$ denotes the trivial $\z_p[G]$-module.
\end{thm}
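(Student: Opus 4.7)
I would prove the biconditional and the quantitative bound separately.

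For the ``only if'' direction, suppose $G$ has no non-cyclic $p$-hypo-elementary subgroup. Lemma~\ref{quotbasis} provides a basis of $\BR_0(G)$ given by the Conlon relations $\theta_{\mathrm{Con},H}$ for non-$p$-hypo-elementary $H$, extended by the Artin relations $\theta_H$ for non-cyclic $p$-hypo-elementary $H\le_G G$. Under our hypothesis the second family is empty, so $\BR_0(G)=\BR_p(G)$; in particular $\theta_G\in\BR_p(G)$, and Lemma~\ref{prelnopterms} yields $v_p(C_{\theta_G}(\one_G))=0$.

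For the converse and the bound, my plan is to first reduce to the case that $G$ is itself $p$-hypo-elementary. The Conlon identity $\one_G=\sum_{K\in\hyp_p(G)}\beta_K\one_K\up^G$ in $A(\zp[G],\perm)$, combined with the multiplicativity and restriction formulas for regulator constants (Lemma~\ref{regcontoolkit}(i) and (iv)) together with $\theta_G\down_K=\theta_K$ (Lemma~\ref{mackeyforartrel}(i)), yields
\[
C_{\theta_G}(\one_G)=\prod_{K\in\nchyp_p(G)}C_{\theta_K}(\one_K)^{\beta_K},
\]
since cyclic $K$ contribute trivially ($\theta_K=0$). This reduces the problem to controlling $v_p(C_{\theta_K}(\one_K))$ for $K$ a non-cyclic $p$-hypo-elementary group. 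When such $K$ has cyclic Sylow $p$-subgroup of order $p^r$ with prime-to-$p$ complement $C_n$, Example~\ref{artinrelcyclic} supplies a closed form for $\theta_K$ which, on substitution into \eqref{oneform}, gives
\[
v_p(C_{\theta_K}(\one_K))=\frac{r(s-n)}{n}, \qquad |K|\cdot v_p(C_{\theta_K}(\one_K))=p^r r(s-n),
\]
where $s=|S|$ and $S=\ker(C_n\to\Aut(C_{p^r}))$; non-cyclicity forces $n>s$, so $v_p<0$ and $|K|v_p\le -p^r\le -p$. For $K$ with non-cyclic Sylow $p$-subgroup this explicit formula no longer applies, and I would instead restrict further to a \emph{minimal} non-cyclic $p$-hypo-elementary subgroup, which by the classification of minimal non-cyclic groups is either $C_p\times C_p$ or (for $p=2$) the quaternion group $Q_8$, and compute by hand.

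The principal obstacle I anticipate is extracting both non-vanishing and the sharp bound $-p/|G|$ from the Conlon product above: the coefficients $\beta_K$ are rational of possibly mixed signs, so a priori the contributions from different $K$ could cancel. One promising bypass is to use integrality of the Artin coefficients (so that $|G|\alpha_H\in\z$, hence $|G|\cdot v_p(C_{\theta_G}(\one_G))\in\z$) and to show directly that this integer is both nonzero and divisible by $p$ whenever some non-cyclic $p$-hypo-elementary subgroup is present, via an analysis of Artin coefficients on the poset of cyclic subgroups together with a case split on whether the Sylow $p$-subgroup of $G$ is itself cyclic. Sharpness of the bound is confirmed already by $G=D_{2p}$, where the formula above gives $|G|\cdot v_p(C_{\theta_G}(\one_G))=-p$.
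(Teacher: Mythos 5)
The forward direction is correct and matches the paper exactly (Remark~\ref{relp}): if every $p$-hypo-elementary subgroup is cyclic, then $\BR_0(G)=\BR_p(G)$ by Lemma~\ref{quotbasis}, so $\theta_G$ is a characteristic-$p$ relation and Lemma~\ref{prelnopterms} gives $v_p(C_{\theta_G}(\one_G))=0$.

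The converse and the quantitative bound have a genuine gap. Your Conlon product
\[
C_{\theta_G}(\one_G)=\prod_{H\in\hyp_p(G)}C_{\theta_H}(\one_H)^{\alpha_H}
\]
is a correct identity (Lemma~\ref{regcontoolkit}\,\textit{i),iv)} plus Lemma~\ref{mackeyforartrel}), and you correctly compute $v_p(C_{\theta_K}(\one_K))=r(s-n)/n<0$ for non-cyclic $p$-hypo-elementary $K=C_{p^r}\rti C_n$ with cyclic Sylow. But the Conlon coefficients $\alpha_H$ have mixed signs, so $v_p(C_{\theta_G}(\one_G))=\sum_H\alpha_H\,v_p(C_{\theta_H}(\one_H))$ could vanish by cancellation even when every factor is strictly negative. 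You acknowledge this, but your proposed bypass --- ``integrality of the Artin coefficients \ldots\ show directly that this integer is both nonzero and divisible by $p$ \ldots\ via an analysis of Artin coefficients on the poset of cyclic subgroups'' --- is only a description of a strategy, not a proof. You have not supplied either (a) an argument that the integer $\frac{|G|}{p}\,v_p(C_{\theta_G}(\one_G))$ is an integer (the naive bound from explicit Artin induction only gives denominator dividing $|G|$; the extra factor of $p$ requires showing that the $C_p$-overgroups of each prime-to-$p$ cyclic subgroup occur in number $\equiv 1 \pmod p$, which is a real lemma), nor (b) an argument that the integer is nonzero. The paper supplies (b) by deriving the closed form
\[
v_p(C_{\theta_G}(\one _G))=-v_p(|G|)+\frac{1}{|G|}\sum_{g \in G}v_p(|g|)+\frac{\mathcal{P}(G,1)}{|G| \cdot (p-1)}
\]
(Lemma~\ref{regconstfirstform}) and then either bounding this crudely when the Sylow $p$-subgroup is non-cyclic (Remark~\ref{noncycpylow}) or using the exact formula $\mathcal{P}(G,k)=\left(1-p^{-(r-k+1)}\right)\tfrac{|G||Z_G(Q)|}{|N_G(Q)|}$ (Proposition~\ref{avgformula}) when the Sylow is cyclic, obtaining $v_p(C_{\theta_G}(\one_G))=-r\bigl(1-|Z_G(Q)|/|N_G(Q)|\bigr)$. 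Both of these are substantial computations that your proposal flags as ``to be done'' rather than carries out. Your remark about reducing to a minimal non-cyclic subgroup ($C_p\times C_p$, $Q_8$) is a non-sequitur here: the Conlon product runs over all $p$-hypo-elementary subgroups simultaneously, and one cannot discard the contributions from the non-minimal ones to avoid the cancellation problem.
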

The method of proof is of explicit group theoretic natured and is disjoint to that of Section \ref{regconstinv}. Moreover, Sections \ref{arblat} and \ref{examples} have no dependency on this section.
\begin{remark}\label{relp}
	The forward direction of Theorem \ref{nonvanishing} is formal: If $G$ contains no non-cyclic $p$-hypo-elementary groups then all characteristic zero relations are relations in characteristic $p$ (see Lemma \ref{quotbasis}). But the regulator constant of a characteristic $p$ relation has trivial valuation at $p$ when evaluated at any lattice (Lemma \ref{prelnopterms}). 
\end{remark}
\begin{remark}
	Let $G$ be a $p$-hypo-elementary group. Then, in terms of the permutation pairing of Construction \ref{permpairingconstr}, the theorem asserts that every entry in the row and column corresponding to $G$ is strictly negative. By Lemma \ref{reductiontophypo}, the regulator constant pairing is non-degenerate whenever each $p$-hypo-elementary subgroup of $G$ contains only cyclic proper subgroups, e.g.\ $G=S_4$. Under the same hypothesis, permutation modules over $\z_p$ are determined by extension of scalars to $\q_p$ and regulator constants (Lemma \ref{tsdet}). 
\end{remark}
\begin{corol}\label{singlenonvan}
	For any finite group $G$, as a function on $\z[G]$-modules, the regulator constant associated to the Artin relation $\theta_H$ vanishes identically if and only if $H$ is cyclic.
\end{corol}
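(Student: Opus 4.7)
\textbf{Proof proposal for Corollary \ref{singlenonvan}.}

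The forward direction is immediate from the definitions: if $H$ is cyclic, then $\theta_H = 0$ in $B(G)$ by the parenthetical remark in Definition \ref{artinrelation} (equivalently, by Corollary \ref{dimbrel0}, since $\BR_0(H) = 0$), and hence $C_{\theta_H}(M) = 1$ for every rationally self-dual lattice $M$. So the content of the corollary is the converse: I need to exhibit, for each non-cyclic $H \le G$, a $\z[G]$-lattice on which $C_{\theta_H}$ does not vanish. The natural candidate is the trivial module $\one_G$. By Lemma \ref{regcontoolkit}\,\emph{iii)} together with Lemma \ref{regcontoolkit}\,\emph{vi)} and Remark \ref{productp}, for any prime $p$ one has
\[ v_p\bigl(C_{\theta_H\up^G}(\one_G)\bigr) \;=\; v_p\bigl(C_{\theta_H}(\one_G\down_H)\bigr) \;=\; v_p\bigl(C_{\theta_H}(\one_H)\bigr), \]
where the right-hand side is computed with $\one_H$ viewed as a $\z_p[H]$-module. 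Thus it suffices to produce a prime $p$ for which the right-hand side is non-zero, which is precisely the conclusion of Theorem \ref{nonvanishing} applied to $H$, provided that $H$ contains a non-cyclic $p$-hypo-elementary subgroup.

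The remaining task, which I view as the only non-trivial step, is a purely group-theoretic claim: \emph{every non-cyclic finite group $H$ contains a non-cyclic $p$-hypo-elementary subgroup for some prime $p$}. I would prove this by a short case split. If some Sylow $p$-subgroup $P$ of $H$ is non-cyclic then $P$ itself is a $p$-group, hence $p$-hypo-elementary, and we are done. Otherwise every Sylow subgroup of $H$ is cyclic, i.e.\ $H$ is a Z-group, and one has a presentation $H \cong C_m \rtimes C_n$ with $\gcd(m,n)=1$ and the action of $C_n$ on $C_m$ non-trivial (otherwise $H$ would be cyclic). Pick a prime $p \mid m$ on which $C_n$ acts non-trivially, and let $P \le C_m$ be the Sylow $p$-subgroup (which is characteristic in $C_m$, hence preserved by $C_n$). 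Then $P \rtimes C_n \le H$ has normal cyclic Sylow $p$-subgroup $P$ with cyclic quotient $C_n$, so it is $p$-hypo-elementary, and it is non-cyclic because the action of $C_n$ on $P$ is non-trivial by construction.

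Combining the two paragraphs yields the corollary: given non-cyclic $H$, choose $p$ by the claim, apply Theorem \ref{nonvanishing} to get $v_p(C_{\theta_H}(\one_H)) \neq 0$, and conclude $v_p(C_{\theta_H\up^G}(\one_G)) \neq 0$ via the displayed equation above, so that $C_{\theta_H}$ does not vanish on the $\z[G]$-lattice $\one_G$. The only step requiring any real work is the group-theoretic claim about Z-groups, and even there the argument is short once one knows the structure of Z-groups; everything else is a direct assembly of Lemma \ref{regcontoolkit} and Theorem \ref{nonvanishing}.
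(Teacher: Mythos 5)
Your proof is correct and follows essentially the same strategy as the paper: reduce to the group-theoretic claim that a non-cyclic finite group must contain a non-cyclic $\ell$-hypo-elementary subgroup for some prime $\ell$, then invoke Theorem~\ref{nonvanishing}. The only differences are cosmetic: you evaluate the regulator constant directly at the trivial $\z[G]$-module $\one_G$ (so that $\one_G\!\downarrow_H=\one_H$ immediately), which is slightly cleaner than the paper's choice of an induced module, and you prove the group-theoretic claim via the structure theorem for Z-groups rather than the paper's direct appeal to Burnside's normal $p$-complement theorem, though the latter is of course the engine behind the former.
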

\begin{proof}
	For cyclic $H$, $\theta_H=0$ so its regulator constant is trivial. For the converse, we first show:
	
\begin{claim} A finite group $K$ is cyclic if and only if all its $\ell$-hypo-elementary subgroups are cyclic for all $\ell$.\end{claim}
	\begin{proof}[Proof of Claim.]
		Suppose $K$ is a group for which all its $\ell$-hypo-elementary subgroups are cyclic for all $\ell$. Then, as the Sylow subgroups must be cyclic, the normaliser of every Sylow subgroup must be equal to its centraliser. Burnside's normal $p$-complement theorem then ensures that every Sylow $p$-subgroup normalises every Sylow $\ell$-subgroup for $\ell\neq p$. As a result, $K$ is a direct product of its (cyclic) Sylow subgroups for different $p$, and thus $K$ is cyclic.
	\end{proof}
	 Now suppose $T\le G$ is non-cyclic. By the claim, $T$ has a non-cyclic $\ell$-hypo-elementary subgroup $L$ for some $\ell$. Then,
	 \begin{align*} 0\! &\overset{\ \textrm{ \ref{nonvanishing}}}{\ \ \, >} \ \ \, v_{\ell}(C_{\theta_L}(\one_{\z_{\ell},L}))\\
	 &\overset{\textrm{ \ref{mackeyforartrel} {\it i)}}}{=} v_{\ell}(C_{\theta_T\down_L}(\one_{\z_{\ell},L}))\\
	 &\overset{\textrm{\ref{regcontoolkit} {\it iv)}}}{=} v_{\ell}(C_{\theta_T}(\one_{\z_{\ell},L}\up^T))\\
	 & \overset{\textrm{\ref{regcontoolkit} {\it vi)}}}{=} v_{\ell}(C_{\theta_T}(\one_{\z,L}\up^T)).\qedhere
	    \end{align*}
\end{proof}
\begin{remark}
	By symmetry (Lemma \ref{sym}), we find that a permutation module $\one \up^G_H$ is trivial under all regulator constants if and only if $H$ is cyclic.
\end{remark}
\subsection{Explicit Artin induction}
 The proof of Theorem \ref{nonvanishing} is made possible by Brauer's formula for explicit Artin induction. 

\begin{notat}
Let $\mu(n)$ denote the M\"{o}bius function of a natural number $n$,
\[\mu(n)= \begin{cases}
(-1)^r & \text{if $n$ is squarefree and has $r$ distinct prime factors}\\
0 & \text{if $n$ is not squarefree}
\end{cases} . \]
Note that $\mu(1)=1$.
\end{notat}
\begin{lemma}[Brauer, {\cite[Thm.\ 2.1.3]{SnaithExplicitBrauer}}] \label{expart}
	If $G$ is any finite group and $\theta_G= [G]-\sum_{\cyc(G)}\alpha_H [H]$ is its Artin relation, then
	\[\alpha_H= \frac{1}{|N_G(H): H|}\sum_{C\ge H}\mu(|C : H|).\]
	Here the sum runs over all cyclic overgroups of $H$ (not just up to conjugacy).
\end{lemma}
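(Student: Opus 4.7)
The plan is to establish the identity at the level of characters and then appeal to the uniqueness statement in Artin's induction theorem (Theorem~\ref{ait}), which guarantees the $\alpha_H$ are determined. Setting $\beta_H := \tfrac{1}{[N_G(H):H]}\sum_{C\ge H} \mu([C:H])$ for cyclic $H\le_G G$, it therefore suffices to show that, for every $g\in G$,
\[
\sum_{H\in\cyc(G)} \beta_H\,\chi_H(g) \;=\; 1,
\]
where $\chi_H$ denotes the character of $\one_H\up^G$.

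First I would compute $\chi_H(g)$ geometrically. By the standard formula for an induced permutation character, $\chi_H(g)$ equals the number of cosets $xH$ fixed by $g$, i.e.\ with $x^{-1}gx\in H$, equivalently with $g\in xHx^{-1}$. Grouping $x$ by the conjugate $H':=xHx^{-1}$ and noting that the $x$ producing a given $H'$ form a coset of $N_G(H)$, I get
\[
\chi_H(g) \;=\; [N_G(H):H]\cdot\#\{\,H'\in (\text{$G$-conjugates of }H):\, g\in H'\,\}.
\]
Substituting the definition of $\beta_H$, the factor $[N_G(H):H]$ cancels, and the sum over conjugacy classes of cyclic subgroups unfolds into a sum over \emph{all} cyclic subgroups $H'$ containing $g$ (using that the inner Möbius sum depends only on the conjugacy class of $H'$):
\[
\sum_{H\in\cyc(G)}\beta_H\,\chi_H(g) \;=\; \sum_{\substack{H'\le G\text{ cyclic}\\ g\in H'}}\;\sum_{\substack{C\ge H'\\ C\text{ cyclic}}} \mu([C:H']).
\]

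The key combinatorial step is to swap the two summations. A cyclic $C$ appears on the right precisely when $C\ge\langle g\rangle$, and for such $C$ the inner subgroups $H'$ are exactly those with $\langle g\rangle\le H'\le C$. In the cyclic group $C$ of order $n$, subgroups $H'$ containing $\langle g\rangle$ correspond bijectively to divisors of $n/d$ (where $d=|\langle g\rangle|$), and under this correspondence $[C:H']=f$ ranges over divisors of $n/d$. The classical identity $\sum_{f\mid m}\mu(f)=\delta_{m,1}$ then forces the inner sum to vanish unless $C=\langle g\rangle$, in which case it equals $1$. Hence only the term $C=\langle g\rangle$ survives, yielding the value $1$, as required.

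The main obstacle, in my view, is just the reindexing from conjugacy classes of cyclic subgroups of $G$ to all cyclic subgroups and the careful book-keeping when pulling the factor $[N_G(H):H]$ out of $\chi_H(g)$; these are purely formal once one observes that the Möbius sum defining $\beta_H$ is invariant under conjugation. Once the double sum is assembled, the proof is powered entirely by the elementary Möbius identity on the cyclic lattice, and uniqueness in Theorem~\ref{ait} forces the $\beta_H$ to coincide with the $\alpha_H$.
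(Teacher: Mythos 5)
Your argument is correct. Note that the paper itself gives no proof of this lemma: it is quoted directly from Snaith's book on explicit Brauer induction, so there is nothing internal to compare against. What you write is essentially the classical verification of Brauer's formula: compute the induced permutation character via fixed cosets, observe that $\chi_H(g)=[N_G(H):H]\cdot\#\{H'\sim_G H: g\in H'\}$ so that the normalising factor cancels, unfold the sum over conjugacy classes into a sum over all cyclic subgroups containing $g$ (using conjugation-invariance of the M\"obius sum), and kill everything except $C=\langle g\rangle$ with the identity $\sum_{f\mid m}\mu(f)=\delta_{m,1}$ inside the cyclic lattice. The only step worth making explicit is the passage from the pointwise character identity $\sum_H\beta_H\chi_H(g)=1$ to the equality $\one_G=\sum_H\beta_H\one_H\up^G$ in $A(\q[G])$, which holds because a virtual $\q[G]$-module is determined by its character; after that, the uniqueness clause of Theorem \ref{ait} does indeed force $\beta_H=\alpha_H$, including in the degenerate case where $G$ is itself cyclic.
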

\begin{lemma}\label{denom}\vspace{-\topsep}
	Let $G$ be a $p$-hypo-elementary group and $\theta_G= [G]-\sum_{\substack{H\le_GG \\H\text{ cyclic}}}\alpha_H[H]$. Then $\alpha_H \in \frac{p}{|G|}\cdot \z$. 
\end{lemma}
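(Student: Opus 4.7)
The plan is to apply Brauer's explicit formula (Lemma~\ref{expart}), which lets me rewrite the desired $\alpha_H\in\frac{p}{|G|}\z$ as $|G|\alpha_H\in p\z$, where
\[ |G|\alpha_H \;=\; |H|\cdot[G:N_G(H)]\cdot A_H,\qquad A_H\;:=\;\sum_{\substack{C\ge H\\ C\text{ cyclic}}}\mu(|C:H|). \]
The conclusion is immediate whenever $p\mid |H|$ or $p\mid[G:N_G(H)]$, and since $G$ is $p$-hypo-elementary its normal Sylow $p$-subgroup $P$ is the unique Sylow $p$-subgroup, so $p\nmid[G:N_G(H)]$ is equivalent to $P\le N_G(H)$. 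Thus the only case requiring work is that of a $p'$-subgroup $H$ with $P\le N_G(H)$, in which I may moreover assume $P\neq 1$, since otherwise $G$ is cyclic and $\theta_G=0$. It remains to show $p\mid A_H$.

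For this I would run a $P$-orbit argument. Conjugation by $P$ preserves the set of cyclic overgroups of $H$ in $G$ (all of which lie in $C_G(H)$ because $H\le Z(C)$) and preserves $\mu(|C:H|)$; since $P$-orbits have $p$-power size, modulo $p$ only the fixed points contribute, so
\[ A_H \;\equiv\; \sum_{C\in\mathcal F}\mu(|C:H|)\pmod p,\qquad \mathcal F\;:=\;\{C\ge H\text{ cyclic}:P\le N_G(C)\}. \]
For $C\in\mathcal F$ I would decompose $C=C_p\times C_{p'}$ into its Sylow parts, both of which are characteristic in $C$ and hence $P$-normal. A nonzero $\mu$-contribution forces $|C_p|\in\{1,p\}$, and a $P$-normal order-$p$ subgroup of $P$ must lie in $Z(P)$ (since $\Aut(C_p)=(\z/p)^\times$ has order prime to $p$). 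Cyclicity of $C$ further forces $C_p$ to centralize $C_{p'}$, so $C_p\le Z(P)^{C_{p'}}$.

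The main obstacle, which carries the rest of the proof, is the following structural observation that I would verify in a Schur--Zassenhaus decomposition $G=P\rtimes Q$: every cyclic $p'$-subgroup of $G$ normalized by $P$ acts trivially on $P$ by conjugation, and so fixes $Z(P)$ pointwise. The key computation is that in $P\rtimes Q$, a subgroup $C_0\le Q$ is $P$-normalized iff $C_0\le S:=\ker(Q\to\Aut(P))$; since any $p'$-subgroup of $G$ is conjugate into $Q$ and conjugation preserves triviality of the action on $P$, the observation follows. Granting this, $Z(P)^{C_{p'}}=Z(P)$ uniformly across the sum, so the inner sum over $C_p$ equals $1-z$ with
\[ z \;=\; \frac{|\Omega_1(Z(P))|-1}{p-1} \;=\; 1+p+\cdots+p^{k-1} \;\equiv\; 1\pmod p \]
for some $k\ge 1$ (using $P\neq 1$). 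Thus $1-z\equiv 0\pmod p$ factors out of $\sum_{\mathcal F}\mu(|C:H|)$, yielding $A_H\equiv 0\pmod p$ as required.
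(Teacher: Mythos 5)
Your proof is correct, and it shares the paper's key ingredients---Brauer's explicit formula, the reduction to a $p'$-order $H$ normalized by the Sylow $p$-subgroup $P$, and the Sylow count that a nontrivial $p$-group has $\equiv 1 \pmod{p}$ subgroups of order $p$---but the bookkeeping is genuinely different. The paper first conjugates $H$ into the cyclic complement, observes that such $H$ is then normal in $G$, and passes to $G/H$, so the remaining task is to show $p\mid\sum_{C\text{ cyclic}}\mu(|C|)$; it then pairs each $p'$-order cyclic $K$ with its index-$p$ cyclic overgroups, treating normal and non-normal $K$ separately. You skip the quotient and run a single $P$-conjugation orbit argument directly on the set of cyclic overgroups of $H$: modulo $p$, only the $P$-normalized ones survive, and for these your structural observation (that a $p'$-cyclic subgroup normalized by $P$ centralizes $P$, via $[P,C_{p'}]\le P\cap C_{p'}=1$) forces $C_p\le Z(P)$ and makes the admissible $C_p$'s independent of $C_{p'}$, so the inner count $\equiv 1\pmod{p}$ collapses the whole sum. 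This organization subsumes the paper's normal/non-normal dichotomy into one step; in particular, the paper's assertion that a non-normal $p'$-cyclic $K$ has no cyclic overgroup of index $p$ is not quite right when $P$ is non-cyclic and $C_P(K)\neq 1$ (the conclusion still holds because such overgroups also come in $p$-power $P$-orbits), and your $P$-orbit framing avoids having to notice or patch this. A valid, and arguably tidier, alternative to the paper's proof.
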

\begin{proof}
	Let $G=P\rti C$ be non-cyclic and $H\le G$ cyclic. By explicit Artin induction, $\alpha_H \in \frac{1}{|N_G(H): H|}\cdot \z$, so there is only anything to prove when $H$ is of order coprime to $p$ and $H$ is normalised by $P$ (and so by $G$). Such an $H$ must therefore lie in the kernel $S$ of the action of $C$ on $P$.
	
	Let $q$ be the quotient map	$ q \colon G \to G/H$.
	Then, a subgroup $ K\le G$ is cyclic if and only if $q(K)$ is. So $q$ defines an index preserving bijection between cyclic subgroups of $G$ containing $H$ and cyclic subgroups of $G/H$. As such, we may assume that $H=\{1\}$.

	We shall show that the contributions to $\sum_{K\text{cyclic}}\mu(|K|)$ from cyclic subgroups of order coprime to $p$, and of order divisible by $p$ exactly once, cancel (recall that $\mu(|K|)$ vanishes for all other $K$). Let $K$ be a cyclic subgroup of $G$ of order coprime to $p$. We split into two cases: First assume $K$ is normal. Any cyclic group containing $K$ with index $p$ is of the form $C_p\ti K$ for some $C_p\le P$. By (the general form of) Sylow's theorems there are $1$ $(\text{\rm mod } p)$ such choices. Since $\mu(|C_p\ti K|)=-\mu(|K|)$ the contributions of $K$ and its overgroups cancel modulo $p$.

	Now assume that $K$ is not normal. In particular, $K$ is not normalised by $P$ and there are no cyclic subgroups isomorphic to $C_p\ti K$. As $P$ acts transitively on the non-singleton set of conjugates of $K$, orbit-stabiliser shows that the number of subgroups of $G$ isomorphic to $K$ is $0$ $(\text{\rm mod }p )$. We have exhausted all cyclic subgroups and thus $p$ divides $\sum_{\substack{C\le G\\ C\text{cyclic}}}\mu(|C|)$ and $\alpha_H \in \frac{p}{|G|}\cdot \z$.
\end{proof}
\begin{corol}\label{denoms}
	For any non-cyclic $p$-hypo-elementary group $G$ and module $M$, $v_p(C_{\theta_G}(M))\in \frac{p}{|G|}\cdot \z$. More generally, for any finite group $G$, given subgroups $H,K$ and a $K$ module $M$, $v_p(C_{\theta_H}(M\up^G_K))\in\frac{p}{\gcd\{|H|,|K|\}}\cdot \z$. 
\end{corol}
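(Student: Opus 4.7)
The first statement follows directly from Lemma \ref{denom}. Expanding $\theta_G = [G] - \sum_{H \in \cyc(G)} \alpha_H [H]$ with each $\alpha_H \in \frac{p}{|G|}\z$ by that lemma,
\[ v_p(C_{\theta_G}(M)) = v_p(C_{[G]}(M)) - \sum_H \alpha_H v_p(C_{[H]}(M)). \]
Since each $v_p(C_{[K]}(M))$ is an integer and $p$ divides $|G|$ (because $G$ is non-cyclic $p$-hypo-elementary, so has non-trivial Sylow $p$-subgroup), one has $\z\subseteq \frac{p}{|G|}\z$ and every summand lies in $\frac{p}{|G|}\z$.

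For the second statement, I will first reduce via Mackey. Using Lemma \ref{regcontoolkit} {\it iv)} to pass the induction through the regulator constant, Lemma \ref{mackeyforartrel} {\it ii)} for the restriction of Artin relations, and Lemma \ref{regcontoolkit} {\it iii)} to re-express each induced term,
\[ v_p(C_{\theta_H}(M\up^G_K)) = \sum_{g \in H\backslash G/K} v_p\bigl(C_{\theta_{H^g\cap K}}(M\down_{H^g\cap K})\bigr). \]
Each $L := H^g\cap K$ has $|L|$ dividing $\gcd\{|H|,|K|\}$, so $\frac{p}{|L|}\z\subseteq \frac{p}{\gcd\{|H|,|K|\}}\z$, and it suffices to prove the following intermediate claim: for any subgroup $L\le G$ and rationally self-dual $L$-module $N$, $v_p(C_{\theta_L}(N))\in\frac{p}{|L|}\z$.

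I would prove this intermediate claim by induction on $|L|$. Cyclic $L$ is immediate since $\theta_L=0$, and non-cyclic $p$-hypo-elementary $L$ is handled by the first statement. For non-cyclic non-$p$-hypo-elementary $L$, I will apply the integer version of Conlon's induction theorem to write $\one_L = \sum_{P\in\hyp_p(L)} n_P \one_P\up^L$ with $n_P\in\z$; since $L\notin\hyp_p(L)$, every $P$ in the sum is a proper subgroup of $L$. Tensoring with $N$ and using Lemma \ref{regcontoolkit} together with $\theta_L\down_P=\theta_P$ (Lemma \ref{mackeyforartrel} {\it i)}) yields
\[ v_p(C_{\theta_L}(N)) = \sum_P n_P\, v_p\bigl(C_{\theta_P}(N\down_P)\bigr), \]
and the inductive hypothesis places each summand in $\frac{p}{|P|}\z\subseteq\frac{p}{|L|}\z$, while the integer coefficients $n_P$ preserve the containment. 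The main obstacle is the reliance on integer coefficients from Conlon's induction theorem; the excerpt only states the rational version (Theorem \ref{dimbrelp}, with the integral statement deferred to a later Theorem \ref{basisphypo}), and if one had only rational coefficients $\beta_P\in \q$ one would need the additional denominator bound $\beta_P\in \frac{|P|}{|L|}\z$, which becomes clean only once integrality is in hand.
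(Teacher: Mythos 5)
Your argument for the first statement is essentially the same as the paper's: both exploit Lemma \ref{denom}'s bound on the denominators of the Artin coefficients together with the fact that $v_p$ of the regulator constant of an \emph{integral} Brauer relation is an integer. The paper clears denominators to produce an integral relation and rescales; you instead fix an inner product and decompose termwise. The individual quantities $v_p(C_{[K]}(M))$ are not invariants (they depend on the inner product), but once a choice is fixed they are well-defined integers and the decomposition is valid, so your framing is fine.

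Your Mackey reduction for the second statement also matches the paper's, correctly arriving at a sum of terms $v_p(C_{\theta_{H^g\cap K}}(N))$ with $|H^g\cap K|$ dividing $\gcd(|H|,|K|)$.

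The subtle point you then flag is real and, notably, the paper's own one-line conclusion (``applying the first statement'') does not address it: $H^g\cap K$ need not be $p$-hypo-elementary, so the first statement does not directly apply. Your idea to close this by induction via a Conlon-type expansion is the right instinct, but the precise claim you invoke---that $\one_L$ lies in the $\z$-span of $\{\one_P\up^L\}$ over $p$-hypo-elementary $P\le L$ inside $A(\z_p[L],\perm)$---is false. With $L=S_3$ and $p=2$, the unique expansion (uniqueness by Theorem \ref{basisphypo}, as the $2$-hypo-elementary subgroups of $S_3$ are exactly $\{1\},C_2,C_3$) is
\[ \one_{S_3} = -\tfrac{1}{2}\,\one_{\{1\}}\up^{S_3} + \one_{C_2}\up^{S_3} + \tfrac{1}{2}\,\one_{C_3}\up^{S_3}, \]
with non-integral coefficients. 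What you actually need, as you yourself observe, is the weaker denominator bound $\beta_P\in\frac{|P|}{|L|}\z$; this holds in the example above and would follow from a Brauer-type bound $\beta_P\in\frac{1}{|N_L(P):P|}\z$ on the Conlon coefficients in the spirit of Lemma \ref{expart}, but neither your proposal nor the paper establishes such a bound. So the second statement is not fully proved; that said, you deserve credit for correctly isolating a subtlety that the paper itself passes over silently.
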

\begin{proof}
By definition the valuations of regulator constants of integral Brauer relations lie in $\z$, so the first statement follows from the lemma and \ref{regcontoolkit}\,{\it iii)}. For the second, the formalism of Lemma \ref{regcontoolkit} and Mackey's formula gives
\begin{align*}v_p(C_{\theta_H\up^G_H}(M\up^G_K))&=v_p(C_{\theta_H}(M\up^G_K\down_H))\\ 
&=\sum_{g \in H\ba G/K}\hspace{-5pt}v_p(C_{\theta_H}(M^g\down_{K^g\cap H }\up ^H))\\
&=\sum_{g \in H\ba G/K}\hspace{-5pt}v_p(C_{\theta_H\down_{K^g\cap H}}(M^g\down_{K^g\cap H })). \end{align*}
But applying the first statement, each term of the sum lies in $\frac{p}{\gcd\{|H|,|K|\}}\cdot \z$.
\end{proof}
We now look to use explicit Artin induction to provide a formula for $v_p(C_{\theta_G}(\one_G))$.
\begin{notat}
	Recall that if two subgroups $H_1$, $H_2$ of $G$ are conjugate, then $[H_1]$ and $[H_2]$ are isomorphic as $G$-sets. To make the Artin relation slightly more canonical, instead of writing 
	\[\theta_G = [G]-\sum_{H\le_G G} \alpha_H[H],\]
	we can choose to write $\theta_G$ uniquely as
	\[\theta_G = [G] -\sum_{H\le G} \alpha'_H [H],\]
	subject to the stipulation that $\alpha'_{H_1}=\alpha'_{H_2}$ for conjugate $H_1,H_2$. Then $\alpha'_H=\frac{1}{|G : N_G(H)|}\cdot \alpha_H$, the $\alpha_H'$ are unique and the two notational choices denote identical elements of $B(G)$.
\end{notat}
\begin{notat}
	Fix a single prime $p$ for the remainder of this section. Let $\pp(G,k)$ denote the number of elements of a given finite group $G$ whose order is divisible by $p^k$.	
\end{notat}
\begin{lemma}\label{regconstfirstform}\vspace{-\topsep}\vspace{3pt}
	For any group $G$ and prime $p$, if $\theta_G$ denotes the Artin relation, then
		\begin{equation}
	v_p(C_{\theta_G}(\one _G))=-v_p(|G|)+\frac{1}{|G|}\sum_{g \in G}v_p(|g|)+\frac{\mathcal{P}(G,1)}{|G| \cdot (p-1)}  .\label{explicitformula}
	\end{equation}
\end{lemma}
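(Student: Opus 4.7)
The plan is to expand $v_p(C_{\theta_G}(\one_G))$ using the formula \eqref{oneform} for regulator constants of the trivial module together with the explicit Artin induction formula of Lemma \ref{expart}, and then simplify the resulting sum via Möbius inversion.

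First, I would apply \eqref{oneform} to $\theta_G = [G] - \sum_{H \le G,\, H \text{ cyclic}} \alpha'_H [H]$, written in the non-conjugacy convention introduced just above the lemma, which gives
\[
v_p(C_{\theta_G}(\one_G)) = -v_p(|G|) + \sum_{H \le G,\, H \text{ cyclic}} \alpha'_H \, v_p(|H|).
\]
From Lemma \ref{expart} and $|G:N_G(H)| \cdot |N_G(H):H| = |G:H|$ one obtains $\alpha'_H = \frac{|H|}{|G|}\sum_{C \ge H,\, C \text{ cyclic}} \mu(|C:H|)$. Substituting and swapping the order of summation (noting that every subgroup of a cyclic group is cyclic), this becomes
\[
-v_p(|G|) + \frac{1}{|G|} \sum_{C \text{ cyclic}} \sum_{H \le C} |H| \, v_p(|H|) \, \mu(|C:H|).
\]

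Next, I would evaluate the inner sum for a cyclic $C$ of order $n = p^k m$ with $(m,p)=1$. Since subgroups of $C$ correspond to divisors of $n$ and each divisor factors uniquely as $p^i e$ with $0 \le i \le k$, $e \mid m$, the sum splits as
\[
\Bigl(\sum_{e \mid m} e \, \mu(m/e)\Bigr) \cdot \Bigl(\sum_{i=0}^{k} p^i \, i \, \mu(p^{k-i})\Bigr).
\]
The first factor equals $\varphi(m)$ by the standard Möbius identity $\sum_{d \mid r} d \, \mu(r/d) = \varphi(r)$. The second factor vanishes when $k = 0$, and for $k \ge 1$ only the terms $i = k,\, k-1$ survive as $\mu(p^j) = 0$ for $j \ge 2$, evaluating to $p^{k-1}(k(p-1)+1)$.

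Finally, I would match this with the right-hand side of \eqref{explicitformula}. For each cyclic $C \le G$ the number of elements of $G$ generating $C$ is $\varphi(|C|)$, hence
\[
\sum_{g \in G} v_p(|g|) = \sum_{C \text{ cyclic}} \varphi(|C|) \, v_p(|C|),
\qquad
\mathcal{P}(G,1) = \sum_{\substack{C \text{ cyclic} \\ p \,\mid\, |C|}} \varphi(|C|).
\]
Using $\varphi(|C|) = p^{k_C - 1}(p-1)\varphi(m_C)$ when $k_C \ge 1$ (where $|C| = p^{k_C} m_C$ with $(p,m_C)=1$), the combination $\frac{1}{|G|}\sum_g v_p(|g|) + \frac{\mathcal{P}(G,1)}{|G|(p-1)}$ factors precisely into $\frac{1}{|G|}\sum_{C:\, k_C \ge 1} p^{k_C - 1} \varphi(m_C)(k_C(p-1)+1)$, which matches the quantity obtained in the previous step. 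The main obstacle will be the combinatorial bookkeeping across the three different summation conventions (elements of $G$, cyclic subgroups, and divisors of $|C|$), together with care in factoring out the $p$-part from the $p'$-part at every stage.
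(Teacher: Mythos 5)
Your proof is correct. It starts the same way as the paper's proof — applying \eqref{oneform} together with explicit Artin induction (Lemma \ref{expart}) and swapping the order of summation to sum over cyclic $C$ first and then over $H \le C$ — but after that the two arguments diverge in execution. The paper splits the inner sum over $H\le C$ according to whether $p$ divides $|C:H|$, using the bijection $H \leftrightarrow pH$, and then reduces both resulting pieces to the right-hand side of \eqref{explicitformula} by a single abstract observation: for any function $f$ depending only on $v_p(|g|)$ and vanishing when $p\nmid|g|$, the double sum $\sum_{C}\sum_{H\le C,\, p\nmid|C:H|}\frac{p-1}{p}\frac{\mu(|C:H|)}{|G:H|}f(h)$ equals $\frac{1}{|G|}\sum_{g\in G}f(g)$, which is proved via the generator-counting identity $\sum_{H\le C}\mu(|C:H|)|H|=\#\{\textrm{generators of }C\}$. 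Your proof instead exploits multiplicativity directly: you factor the inner sum $\sum_{H\le C}|H|\,v_p(|H|)\,\mu(|C:H|)$ into its $p$-part and $p'$-part, evaluate both in closed form (getting $\varphi(m)\cdot p^{k-1}(k(p-1)+1)$), and then separately rewrite the right-hand side in terms of cyclic subgroups via $\sum_g v_p(|g|)=\sum_C\varphi(|C|)v_p(|C|)$ and $\mathcal{P}(G,1)=\sum_{p\mid|C|}\varphi(|C|)$, matching the two closed forms using $\varphi(p^k m)=p^{k-1}(p-1)\varphi(m)$. What the paper's approach buys is avoiding any explicit closed-form evaluation — the $(\dagger)$ and $(\star)$ pieces are identified with the target terms in one uniform stroke. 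What your approach buys is that each step is an elementary, checkable computation with no hidden generality; it also makes transparent why the coefficient $\frac{1}{p-1}$ appears (from $\varphi(p^k)=p^{k-1}(p-1)$). Both use the same two underlying ingredients (Möbius on the subgroup lattice of a cyclic group, and partitioning $G$ by cyclic subgroup generated), just organized differently.
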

\begin{proof}
	Running over all cyclic subgroups rather than their conjugacy classes, explicit Artin induction gives that
	\[\theta_G = [G]-\sum_{\substack{H\le G\\H \text{-cyclic}}}[H]\cdot \frac{1}{|G : H|}\sum_{\substack{C\ge H\\ C \text{-cyclic}}}\mu(|C : H|).\]
	Applying the formula \eqref{oneform} for regulator constants at the trivial module we find that 
	\begin{align*}
		v_p(C_{\theta_G}(\one _G))&=-v_p(|G|)+\sum_{H\le G}\frac{v_p(|H|)}{|G: H|}\sum_{C \ge H}\mu(|C : H|), \\  
		\intertext{where from now on it is assumed that sums run only over all cyclic subgroups or overgroups. Changing the order of summation,}
			v_p(C_{\theta_G}(\one _G))&=-v_p(|G|)+\sum_{C\le G}\sum_{H\le C}\frac{v_p(|H|)}{|G: H|}\mu(|C : H|)   \\
		&=-v_p(|G|)+\sum_{\substack{C\le G\\ p \mid |C| }}\sum_{H\le C}\frac{v_p(|H|)}{|G: H|}\mu(|C : H|)   \\
		\intertext{as only subgroups $C$ for which $p$ divides $|C|$ make any contribution. Within the second sum, by definition of the M\"{o}bius function, only the subgroups of squarefree index contribute. We separate into the sums over the subgroups $H$ of $C$ of index divisible by $p$, and subgroups $H$ of index not divisible by $p$. There is a bijection between these two sets given by sending a subgroup $H$ of index not divisible by $p$ to $pH$. Thus,}
			v_p(C_{\theta_G}(\one _G))&=-v_p(|G|)+\sum_{\substack{C\le G\\ p \mid |C| }}\sum_{\substack{H\le C\\ p \nmid |C: H|}}\frac{v_p(|H|)}{|G: H|}\mu(|C : H|) +\sum_{\substack{C\le G\\ p \mid |C| }}\sum_{\substack{H\le C\\ p \nmid |C: H|}}\frac{v_p(|pH|)}{|G: pH|}\mu(|C : pH|)  \\
		&=-v_p(|G|)+\sum_{\substack{C\le G\\ p \mid |C| }}\sum_{\substack{H\le C\\ p \nmid |C: H|}}\frac{v_p(|H|)}{|G: H|}\mu(|C : H|) -\sum_{\substack{C\le G\\ p \mid |C| }}\sum_{\substack{H\le C\\ p \nmid |C: H|}}\frac{v_p(|H|)-1}{p|G: H|}\mu(|C : H|)  \\
		&=-v_p(|G|)+\sum_{\substack{C\le G\\ p \mid |C| }}\sum_{\substack{H\le C\\ p \nmid |C: H|}}\left(v_p(|H|)\cdot \frac{p-1}{p}\cdot\frac{\mu(|C : H|)}{|G: H|} +\frac{\mu(|C : H|)}{p|G: H|} \right)\\
		&=-v_p(|G|)+\underbrace{\sum_{\substack{C\le G\\ p \mid |C| }}\sum_{\substack{H\le C\\ p \nmid |C: H|}}v_p(|H|)\cdot \frac{p-1}{p}\cdot\frac{\mu(|C : H|)}{|G: H|}}_{(\dagger)} +\underbrace{\sum_{\substack{C\le G\\ p \mid |C| }}\sum_{\substack{H\le C\\ p \nmid |C: H|}}\frac{\mu(|C : H|)}{p|G: H|}}_{(\star)}
	\end{align*}
	We claim that $(\star)$ is equal to $\frac{\mathcal{P}(G,1)}{|G| \cdot (p-1)}$ and $(\dagger)$ is equal to $\frac{1}{|G|}\sum_{g \in G}v_p(|g|)$. To see this suppose that $f \colon G \to \co$ is any map of sets which is constant on elements $g\in G$ for which $v_p(|g|)$ is equal and for which $f(g)=0$ when $v_p(|g|)=0$. In this case, we have that
	\[\sum_{\substack{C\le G\\ p \mid |C| }}\sum_{\substack{H\le C\\ p \nmid |C : H|}}\frac{p-1}{p}\frac{\mu(|C: H|)}{|G: H|}f(h)=\frac{1}{|G|}\sum_{g \in G} f(g),\]
	where on the left hand side $h$ denotes any generator of $H$. This follows from the fact that for a cyclic group $C$
	\[\sum_{{H \le C}} \mu(|C: H|) |H| = |\{\textrm{generators of C}\}|. \]
	Setting $f(g)=v_p(|g|)$ gives
	\[\sum_{\substack{C\le G\\ p \mid |C| }}\sum_{\substack{H\le C\\ p \nmid |C: H|}}v_p(|H|)\cdot \frac{p-1}{p}\cdot\frac{\mu(|C : H|)}{|G: H|}=\frac{{1}}{|G|}\sum_{g\in G} v_p(|g|),  \]
	whilst taking $f(g)=\begin{cases}
	\frac{1}{p-1} & p \mid  |g|\\
	0 & p \nmid |g|
	\end{cases}$ shows that
	\begin{align*}\sum_{\substack{C\le G\\ p \mid |C| }}\sum_{\substack{H\le C\\ p \nmid |C: H|}}\frac{\mu(|C : H|)}{p|G: H|}&=\frac{1}{|G|}\sum_{\substack{g\in G\\v_p(|g|)\ge 1}}\frac{1}{p-1}\\
		&= \frac{\mathcal{P}(G,1)}{|G| \cdot (p-1)}.
	\end{align*}
	In conclusion,
	\[v_p(C_{\theta_G}(\one _G))=-v_p(|G|)+\frac{1}{|G|}\sum_{g \in G}v_p(|g|)+\frac{\mathcal{P}(G,1)}{|G| \cdot (p-1)}  .\]
\end{proof}
We shall see that the value of \eqref{explicitformula} is less than or equal to zero for all groups $G$. Thus, Theorem \ref{nonvanishing} gives a numerical characterisation of groups for which all $p$-hypo-elementary subgroups are cyclic:
\begin{corol}
	Let $G$ be any finite group and $p$ a prime. Then, $G$ contains no non-cyclic $p$-hypo-elementary subgroups if and only if
	\begin{align*} \frac{1}{|G|}\sum_{g \in G}v_p(|g|)+\frac{\mathcal{P}(G,1)}{|G| \cdot (p-1)}= v_p(|G|). \end{align*}
\end{corol}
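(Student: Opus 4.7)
The plan is to observe that this corollary is an immediate consequence of combining Lemma \ref{regconstfirstform} with Theorem \ref{nonvanishing}. Lemma \ref{regconstfirstform} provides the explicit formula
\[ v_p(C_{\theta_G}(\one_G)) = -v_p(|G|) + \frac{1}{|G|}\sum_{g\in G}v_p(|g|) + \frac{\mathcal{P}(G,1)}{|G|\cdot(p-1)}, \]
so rearranging, the equation in the statement is equivalent to $v_p(C_{\theta_G}(\one_G)) = 0$.

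Thus the first step is to rewrite the claimed identity as the vanishing statement $v_p(C_{\theta_G}(\one_G)) = 0$. The second step is to apply Theorem \ref{nonvanishing}, which asserts precisely that $v_p(C_{\theta_G}(\one_G)) \neq 0$ if and only if $G$ contains a non-cyclic $p$-hypo-elementary subgroup. Contrapositively, $v_p(C_{\theta_G}(\one_G)) = 0$ if and only if $G$ contains no such subgroup, which gives the desired equivalence.

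There is no real obstacle here since both ingredients are already in hand: the formula has just been derived, and the non-vanishing theorem is being assumed (having been stated at the beginning of the section). The only care needed is to note that the ``$\le$'' aspect of Theorem \ref{nonvanishing} (the bound $v_p(C_{\theta_G}(\one_G)) \le -p/|G|$) is not required for the corollary itself, only the dichotomy between vanishing and non-vanishing. Indeed, the remark preceding the corollary emphasises that \eqref{explicitformula} is always $\le 0$, which together with Theorem \ref{nonvanishing} guarantees that the equation holds exactly when the left-hand side is zero, i.e.\ when no non-cyclic $p$-hypo-elementary subgroup is present.
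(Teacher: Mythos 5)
Your proof is correct and is exactly the argument the paper has in mind: the displayed equation is, by Lemma \ref{regconstfirstform}, equivalent to $v_p(C_{\theta_G}(\one_G)) = 0$, and Theorem \ref{nonvanishing} supplies the dichotomy between this vanishing and the absence of non-cyclic $p$-hypo-elementary subgroups. Your observation that the quantitative bound $\le -p/|G|$ is superfluous here is also consistent with the paper's remarks surrounding the corollary.
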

The reverse direction whilst a consequence of the argument given in Remark \ref{relp} is already somewhat non-obvious.
\begin{remark}\label{noncycpylow}
	Suppose that $G$ has non-cyclic Sylow $p$-subgroups. Let $d=v_p(|G|)$, as $G$ contains no elements of order $p^d$, for any $g\in G$ $v_p(|g|)\le d-1$ and we may crudely bound
	\begin{align*}
		\frac{1}{|G|} \sum_{g \in G}v_p(|g|)+\frac{\mathcal{P}(G,1)}{|G| \cdot (p-1)}&\le\frac{1}{|G|}\sum_{g \in G} \left(d-1 +\frac{1}{p-1}\right)\\
		&< d.
	\end{align*}
	Applying \eqref{explicitformula} gives
	\[v_p(C_{\theta_G}(\one _G ))<0.\]
	The case of cyclic Sylow $p$-subgroups requires considerably more care.
\end{remark}

\subsection{Average $p$-orders of elements of groups with cyclic Sylow subgroups}
In this section, we complete the proof of Theorem \ref{nonvanishing} by explicit calculation of $v_p(C_{\theta_G}(\one_G)))$ for groups with cyclic Sylow $p$-subgroups using \eqref{explicitformula} for groups with cyclic Sylow $p$ subgroups. This requires an explicit calculation of $\pp(G,k)$ in terms of elementary invariants:

\begin{prop}\label{avgformula}
	Let $G$ be any group with cyclic Sylow $p$-subgroups of order $p^r$. Then, for any $1\le k \le r$,
	\[\pp(G,k)= \left(\frac{p^{r-k+1}-1}{p^{r-k+1}}\right)  \frac{|G||Z_G(Q)|}{|N_G(Q)|},\]
	where $Q$ denotes any choice of non-trivial $p$-subgroup of $G$. If $k=0$, then $\pp(G,k)=|G|$ and if $k>r$, then $\pp(G,k)=0$.
\end{prop}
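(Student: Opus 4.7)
The plan is to count each element $g \in G$ with $p^k \mid |g|$ using the canonical decomposition $g = g_p g_{p'}$ into commuting $p$- and $p'$-parts, noting that $p^k \mid |g|$ iff $|g_p| \ge p^k$. Since the Sylow $p$-subgroup $P$ of $G$ is cyclic of order $p^r$, each $g_p$ generates a cyclic $p$-subgroup of some order $p^s$, and all cyclic $p$-subgroups of a given order $p^s$ are $G$-conjugate (each sits in a Sylow which has a unique subgroup of order $p^s$, and Sylows are conjugate). Writing $C_s$ for a chosen representative of order $p^s$, there are $|G|/|N_G(C_s)|$ such subgroups, and for each $C$ of order $p^s$ the elements $g$ with $\langle g_p\rangle = C$ correspond to pairs (generator of $C$, $p'$-element of $Z_G(C)$). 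This decomposes the count as
\[
\pp(G,k) \;=\; \sum_{s=k}^{r} \frac{|G|}{|N_G(C_s)|}\,\varphi(p^s)\,\bigl|Z_G(C_s)_{p'}\bigr|.
\]

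The key structural input I would prove is that any $p$-element of $N_G(Q)$ centralises $Q$, for every non-trivial $p$-subgroup $Q$: indeed if $g \in N_G(Q)$ is a $p$-element then $\langle g,Q\rangle$ is a $p$-subgroup of $G$, hence cyclic, hence abelian. Consequently $N_G(Q)/Z_G(Q)$ embeds into the prime-to-$p$ part of $\Aut(Q)$, and since a prime-to-$p$ automorphism of a cyclic $p$-group is determined by its restriction to the unique subgroup of order $p$, this embedding is detected already on $C_1$. Next, for $s \ge 1$ I would run a Frattini argument inside $Z_G(C_s)$: note $P \le Z_G(C_s)$ because $P$ is abelian, so both $P$ and $gPg^{-1}$ (for $g \in N_G(C_s)$) are Sylow $p$-subgroups of $Z_G(C_s)$, giving $z \in Z_G(C_s)$ with $gPg^{-1} = zPz^{-1}$. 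This yields $N_G(C_s) = N_G(P)\cdot Z_G(C_s)$, and combined with the first observation forces $N_G(P) \cap Z_G(C_s) = Z_G(P)$. Therefore
\[
\frac{|Z_G(C_s)|}{|N_G(C_s)|} \;=\; \frac{|Z_G(P)|}{|N_G(P)|},
\]
which in particular proves that $|Z_G(Q)|/|N_G(Q)|$ is independent of the choice of non-trivial $p$-subgroup $Q$, as the stated formula requires.

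I would then compute $|Z_G(C_s)_{p'}|$ by applying Burnside's normal $p$-complement theorem to $H := Z_G(C_s)$, whose Sylow $p$-subgroup is $P$. The equality $N_H(P) = N_G(P) \cap Z_G(C_s) = Z_G(P) = Z_H(P)$ established above is exactly Burnside's hypothesis, producing a normal $p$-complement in $H$; this complement coincides with the subgroup of $p'$-elements, giving $|Z_G(C_s)_{p'}| = |Z_G(C_s)|/p^r$. Substituting into the initial sum and using the telescoping identity $\sum_{s=k}^r \varphi(p^s) = p^r - p^{k-1}$ yields
\[
\pp(G,k) \;=\; \frac{|G||Z_G(Q)|}{p^r|N_G(Q)|}\sum_{s=k}^r \varphi(p^s) \;=\; \frac{|G||Z_G(Q)|}{|N_G(Q)|}\cdot\frac{p^{r-k+1}-1}{p^{r-k+1}}.
\]

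The main obstacle is the combined Frattini--Burnside step: isolating $Z_G(C_s)$ as a group with a normal $p$-complement (thereby letting one read off the number of $p'$-elements as $|Z_G(C_s)|/p^r$) and simultaneously showing that the Weyl-like ratio $|Z_G(Q)|/|N_G(Q)|$ does not depend on the non-trivial $p$-subgroup $Q$. Once these are in place the proposition reduces to a direct summation of Euler totients.
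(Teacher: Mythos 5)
Your proof is correct, and it takes a genuinely different route from the paper's. Both arguments rest on the independence of the ratio $|N_G(Q)|/|Z_G(Q)|$ from the non-trivial $p$-subgroup $Q$ (the paper's Claim 1), and your derivation of that fact is essentially the paper's, repackaged: $p$-elements of $N_G(Q)$ centralise $Q$ because $\langle g,Q\rangle$ is a cyclic $p$-group, the $p'$-part of $\Aut(Q)$ injects into the automorphism group of the order-$p$ subgroup, and one needs the product decomposition $N_G(C_s)=N_G(P)\,Z_G(C_s)$ — though you obtain it by a Frattini argument inside $Z_G(C_s)$ where the paper counts cosets of $N_G(P)$ directly. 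Where you diverge substantially is the main count. The paper reduces to a group with a central $p^k$-subgroup (its Claim 2) and then runs a two-stage recursion on central quotients (Claims 3 and 4), with an induction on $r$ for the base case $k=1$ and an induction on $k$ for the step. You instead count in a single pass by fibring the commuting decomposition $g=g_p g_{p'}$ over the cyclic $p$-subgroup $\langle g_p\rangle$, giving $\pp(G,k)=\sum_{s\ge k}|G:N_G(C_s)|\,\varphi(p^s)\,|Z_G(C_s)_{p'}|$, and then compute $|Z_G(C_s)_{p'}|$ in one stroke via Burnside's normal $p$-complement theorem applied to $H=Z_G(C_s)$; the Burnside hypothesis $N_H(P)=Z_H(P)$ is exactly the intersection identity $N_G(P)\cap Z_G(C_s)=Z_G(P)$ from your Frattini step. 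Your route collapses the paper's double induction into a single Euler-totient telescoping sum, at the modest cost of invoking Burnside's theorem, which the paper's more elementary but longer recursion avoids.
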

Here, $Z_G(-)$ is as defined in Notation \ref{centraliser}. We split the proof into four intermediate claims. Firstly, the ratio $|N_G(Q)|/|Z_G(Q)|$ is independent of the choice of $Q$:
\begin{claim} {\sc 1}\label{indepofQ}
	If $G$ is any finite group and $p$ a prime such that $G$ has cyclic Sylow $p$-subgroups, then, as $Q$ runs over non-trivial $p$-subgroups, $|N_G(Q)|/|Z_G(Q)|$ is constant.
\end{claim}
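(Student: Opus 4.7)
The plan is to show that for any non-trivial $p$-subgroup $Q$ of $G$,
\[ |N_G(Q)|/|Z_G(Q)| \;=\; |N_G(P)|/|Z_G(P)|, \]
where $P$ is a fixed Sylow $p$-subgroup of $G$, cyclic of order $p^r$. Since this ratio is invariant under $G$-conjugation of $Q$ and every non-trivial $p$-subgroup of $G$ is $G$-conjugate to a subgroup of $P$, I may assume $Q\le P$. As $P$ is cyclic, $Q$ is the unique subgroup of $P$ of its order, hence characteristic in $P$; in particular $N_G(P)\le N_G(Q)$ and $Z_G(P)\le Z_G(Q)$.

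The central step is the factorization
\[ N_G(Q) \;=\; Z_G(Q)\cdot N_G(P). \]
Given $g\in N_G(Q)$, both $P$ and $gPg^{-1}$ lie in $Z_G(Q)$: the former because $P$ is abelian, the latter because $gPg^{-1}$ centralizes $gQg^{-1}=Q$. Each is a Sylow $p$-subgroup of $Z_G(Q)$, since both have order $p^r$, the full $p$-part of $|Z_G(Q)|$. Applying Sylow's theorem inside $Z_G(Q)$ produces $z\in Z_G(Q)$ with $zPz^{-1}=gPg^{-1}$, so $z^{-1}g\in N_G(P)$ and $g=z\cdot(z^{-1}g)$, yielding the claimed factorization. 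The second isomorphism theorem then gives
\[ N_G(Q)/Z_G(Q) \;\cong\; N_G(P)/\bigl(N_G(P)\cap Z_G(Q)\bigr). \]

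It remains to prove $N_G(P)\cap Z_G(Q)=Z_G(P)$; the inclusion $\supseteq$ is automatic. For $\subseteq$: since $P$ is abelian Sylow, $P\le Z_G(P)$ contains a Sylow $p$-subgroup of $N_G(P)$, so $N_G(P)/Z_G(P)$ has order prime to $p$. Its image under the conjugation embedding $N_G(P)/Z_G(P)\hookrightarrow \Aut(P)$ therefore lies in the $p'$-part of $\Aut(P)$. For odd $p$, $\Aut(P)\cong(\z/p^r)^\times$ is cyclic, and the restriction map $\Aut(P)\to\Aut(Q)$, given by reduction $(\z/p^r)^\times\to(\z/p^k)^\times$ with $|Q|=p^k$, restricts to an isomorphism on $p'$-parts (both canonically identified with $(\z/p)^\times$). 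Hence an element of $N_G(P)$ that centralizes $Q$ already acts trivially on $P$, i.e.\ lies in $Z_G(P)$. For $p=2$, $\Aut(P)$ is itself a $2$-group, so $N_G(P)/Z_G(P)$, being both a $2$-group and of odd order, is trivial and the identity holds vacuously.

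The main obstacle is the last step: pinning down the right structural input from $\Aut(P)$ for cyclic $p$-groups, in particular keeping the odd $p$ and $p=2$ cases cleanly separated.
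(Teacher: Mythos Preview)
Your proof is correct and follows essentially the same approach as the paper: both establish the factorization $N_G(Q)=Z_G(Q)\cdot N_G(P)$ and the intersection identity $N_G(P)\cap Z_G(Q)=Z_G(P)$, the latter via the fact that restriction $\Aut(P)\to\Aut(Q)$ is an isomorphism on $p'$-parts. The only difference is organizational: the paper inducts from order $p^e$ down to $p^{e-1}$, whereas you compare an arbitrary $Q$ directly against the Sylow $P$ using Sylow's theorem inside $Z_G(Q)$, which is a mild streamlining of the same argument.
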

\begin{proof}\vspace{-\topsep}\vspace{3pt}
	For such a group all $p$-subgroups of the same order are conjugate. If $Q,Q'$ are conjugate $p$-subgroups their normalisers and centralisers are related by conjugation and so the above ratio is constant. Thus, we need just show that if $P$ is a subgroup of order $p^e$, $e\ge 2$, and $Q$ its unique subgroup of order $p^{e-1}$, then
	\begin{equation}\frac{|N_G(P)|}{|Z_G(P)|}=\frac{|N_G(Q)|}{|Z_G(Q)|}. \label{quotequal} \end{equation}
	First note that $N_G(P)\cap Z_G(Q)=Z_G(P)$. This is because both sides contain $P$, but the coprime to $p$-part of $\Aut(P)$ is canonically isomorphic to the coprime to $p$-part of $\Aut(Q)$ (both are cyclic of order $p-1$). In other words, within $N_G(P)$, to centralise $Q$ is to centralise $P$. As a result, there is an inclusion
	\[N_G(P)/Z_G(P)\hookrightarrow N_G(Q)/Z_G(Q),\]
	and to prove \eqref{quotequal} we must show that $N_G(Q)=N_G(P)Z_G(Q)$. Indeed, as all terms are contained in $N_G(Q)$, we may assume that $Q\unlhd G$. Each choice of subgroup of order $p^e$ (i.e.\ conjugate of $P$) must centralise $Q$, its unique subgroup of order $p^{e-1}$, thus $\bigcup_{g \in G/N_G(P)}P^g\subseteq  Z_G(Q)$. In particular, $Z_G(Q)$ contains a representative of each coset of $G/N_G(P)$ and so $N_G(P)Z_G(Q)=G$. And in general, $N_G(P)Z_G(Q)=N_G(Q)$.
\end{proof}
Next, we show that to prove the formula for fixed $k$ we may reduce to groups with a central $C_{p^k}$ subgroup.

\begin{claim}{\sc 2} For any finite group $G$, prime $p$ and $k\ge 1 $, all elements of $G$ of order divisible by $p^k$ are contained in $\bigcup_{Q}Z_G(Q)$ as $Q$ runs over subgroups of $G$ isomorphic to $C_{p^k}$. Moreover, if $G$ has cyclic Sylow $p$-subgroups, then 
	\[\pp(G,k)=|G : N_G(Q)|\cdot \pp(Z_G(Q),k),\]
	for any choice of $Q\cong C_{p^k}$.
\end{claim}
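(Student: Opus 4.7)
The first assertion is a direct structural observation. If $g\in G$ has order divisible by $p^k$, then $\langle g\rangle$ is cyclic of order divisible by $p^k$, hence contains a unique subgroup $Q_g$ of order $p^k$; since $g$ commutes with every element of the cyclic group $\langle g\rangle$, it centralises $Q_g$, and so $g\in Z_G(Q_g)$. This proves the inclusion $\{g\in G:p^k\mid |g|\}\subseteq\bigcup_Q Z_G(Q)$.

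For the formula, I would first reduce to a single conjugacy class: because $G$ has cyclic Sylow $p$-subgroups, every $p$-subgroup of a given order lies inside a (cyclic, hence unique-subgroup-per-order) Sylow, and all Sylows are conjugate, so all subgroups of $G$ of order $p^k$ form a single conjugacy class. Thus there are exactly $|G:N_G(Q)|$ such subgroups, and for any two of them $Q'$, $Q''$ the centralisers $Z_G(Q')$ and $Z_G(Q'')$ are $G$-conjugate, so $\pp(Z_G(Q'),k)=\pp(Z_G(Q),k)$. Given this, the identity $\pp(G,k)=|G:N_G(Q)|\cdot\pp(Z_G(Q),k)$ will follow at once from the claim that the sets $X_{Q'}:=\{g\in Z_G(Q'):p^k\mid |g|\}$, as $Q'$ ranges over the conjugates of $Q$, partition $\{g\in G:p^k\mid |g|\}$.

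\textbf{Key step and main obstacle.} The heart of the argument is showing that this is genuinely a partition, i.e.\ that an element $g$ of order divisible by $p^k$ can centralise only one $p^k$-subgroup, namely its own $Q_g$. The first part gives $g\in X_{Q_g}$, so surjectivity onto $\{g:p^k\mid |g|\}$ is automatic; the content is disjointness. My plan here is: suppose $g\in Z_G(Q')$ with $p^k\mid |g|$. Then $\langle g, Q'\rangle$ is abelian (as $g$ centralises the abelian group $Q'$), so its Sylow $p$-subgroup is an abelian $p$-group which, being contained in a Sylow $p$-subgroup of $G$, must be cyclic. This cyclic $p$-group contains both $Q'$ and $Q_g$, each of order $p^k$; since a cyclic group has a unique subgroup of each order, $Q'=Q_g$.

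The cyclic-Sylow hypothesis enters precisely at this step (to force $\langle g,Q'\rangle$'s $p$-part to be cyclic) and is indispensable: without it, one $g$ could centralise many non-conjugate $p^k$-subgroups and the partition would fail, reflecting why the clean multiplicative formula requires this hypothesis. Once disjointness is established, the counting identity and the resulting formula for $\pp(G,k)$ are immediate.
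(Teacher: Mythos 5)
Your proof is correct and takes essentially the same approach as the paper: both arguments establish that the sets $\{g \in Z_G(Q) : p^k \mid |g|\}$, as $Q$ ranges over the (single conjugacy class of) $C_{p^k}$-subgroups, partition $\{g : p^k \mid |g|\}$, with disjointness coming from the cyclic-Sylow hypothesis forcing an element to centralise a unique $C_{p^k}$-subgroup. The paper phrases the disjointness slightly differently (showing $Q$ is the unique $C_{p^k}$-subgroup of $Z_G(Q)$, hence $\pp(Z_G(Q)\cap Z_G(Q'),k)=0$ for distinct $Q,Q'$), but the underlying observation is the one you isolate via the cyclic Sylow of $\langle g,Q'\rangle$.
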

\begin{proof}
	Let $g \in G$ and $v_p(|g|) \ge k$. Then, $g$ centralises the subgroup of $\la g \ra$ isomorphic to $C_{p^k}$. So $g$ is contained in $\bigcup_{Q}Z_G(Q)$, the union of the centralisers of all $C_{p^k}$-subgroups of $G$. Now let $Q\le G$ be some $C_{p^k}$-subgroup. Since $G$ has cyclic Sylow $p$-subgroups, $Q$ must be the unique $C_{p^k}$-subgroup of $Z_G(Q)$. As a result, if $Q'$ is a distinct $C_{p^k}$-subgroup, then $Z_G(Q)\cap Z_G(Q')$ does not contain any $C_{p^k}$-subgroup, and so $\pp(Z_G(Q)\cap Z_G(Q'),k)=0$. Thus,
	\[\pp(G,k) = \sum_Q \pp(Z_G(Q),k)=|G : N_G(Q)|\cdot  \pp(Z_G(Q),k).\qedhere\]
\end{proof}
As a basis for induction we show:

\begin{claim}{\sc 3}
	Let $G$ be any group and $Q$ a subgroup of order $p$ that is contained in the centre. Then,
	\[\pp(G,1)=\pp(G/Q,1)+\frac{p-1}{p}\cdot |G|. \] 
\end{claim}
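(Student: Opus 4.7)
The plan is to analyze the fibers of the quotient map $q \colon G \to G/Q$ and count the contribution to $\pp(G,1)$ coset by coset. Since $Q$ has order $p$, each coset $gQ$ contains exactly $p$ elements, so it suffices to determine, for each $\bar g \in G/Q$, how many elements of $q^{-1}(\bar g) = gQ$ have order divisible by $p$.

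I would split into two cases based on $|\bar g|$. First, suppose $p \mid |\bar g|$. Since $|\bar g|$ divides $|g'|$ for every $g' \in gQ$, all $p$ elements of such a coset contribute to $\pp(G,1)$. This accounts for $p \cdot \pp(G/Q, 1)$ elements. Second, suppose $p \nmid |\bar g|$; write $m = |\bar g|$. Then $g^m \in Q$, say $g^m = z^j$ for a fixed generator $z$ of $Q$. Using that $Q$ is central, for each $i \in \{0, 1, \ldots, p-1\}$,
\[ (g z^i)^m = g^m z^{im} = z^{j+im}. \]
Since $p \nmid m$, the map $i \mapsto j + im \pmod p$ is a bijection on $\z/p\z$, so there is a unique $i$ for which $(gz^i)^m = 1$; this element has order $m$ (coprime to $p$), whilst the remaining $p-1$ elements of the coset satisfy $(g')^m \in Q \setminus \{1\}$, forcing $|g'| = mp$. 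So each such coset contributes exactly $p-1$ elements to $\pp(G,1)$.

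Combining the two cases, with $|G/Q| - \pp(G/Q,1) = |G|/p - \pp(G/Q,1)$ cosets of the second type, gives
\begin{align*}
\pp(G,1) &= p \cdot \pp(G/Q,1) + (p-1)\left(\frac{|G|}{p} - \pp(G/Q,1)\right) \\
&= \pp(G/Q,1) + \frac{(p-1)|G|}{p},
\end{align*}
as required.

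The only subtle point is the coset-by-coset analysis in the second case, and the key observation making it work is that $Q$ is central: this lets us expand $(gz^i)^m$ as $g^m z^{im}$ and reduce the counting to solving a single linear congruence modulo $p$. Without centrality of $Q$ one could not cleanly separate the $z^i$ factor, so this hypothesis is used in an essential way.
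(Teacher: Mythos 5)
Your proof is correct and follows essentially the same approach as the paper: both analyse the fibres of $q\colon G\to G/Q$ and split on whether $p$ divides the order of the image. The only cosmetic difference is that the paper observes $q^{-1}(\la h\ra)\cong C_p\ti C_{|h|}$ and reads off the count structurally, whereas you do the equivalent computation directly by expanding $(gz^i)^m$ using centrality of $Q$.
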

\begin{proof}
	Consider the sequence
	\[1 \to Q \to G \overset{q}{\to} G/Q \to 1,\]
	and let $h$ run over elements of $G/Q$. First assume that $h$ has order not divisible by $p$. As $Q \le Z(G)$, the preimage of $\la h \ra$ is isomorphic to $C_p \ti C_{|h|}$ on which $q$ is projection onto the second factor. Thus, $q^{-1}(h)$ contains precisely $p-1$ elements of order $p$.

	Otherwise, if $h$ has order divisible by $p$, then all elements of $q^{-1}(h)$ have order divisible by $p$. As a result
	\[\pp(G,1)=p\cdot \pp(G/Q,1)+(p-1)(|G/Q|-\pp(G/Q,1)),\]
	giving the stated formula.
\end{proof}
The inductive step is given by:
\begin{claim} {\sc 4}
	Let $G$ be any group with cyclic Sylow $p$-subgroups and containing a central subgroup $Q$ of order $p^k$ with $k\ge 2$. Then,
	\[\pp(G,k)=p\cdot \pp(G/\tilde Q,k-1), \] 
	where $\tilde Q\le Q$ denotes the subgroup of order $p$
\end{claim}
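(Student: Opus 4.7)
The strategy is to establish, for every $g \in G$, the equivalence
\[
p^k \mid |g| \quad \Longleftrightarrow \quad p^{k-1} \mid |q(g)|,
\]
where $q \colon G \to G/\tilde Q$ is the quotient map. Once this is proved the claim is immediate: the set on the left is precisely $q^{-1}$ of the set on the right, and every fibre of $q$ has size $|\tilde Q| = p$, so $\pp(G,k) = p\cdot\pp(G/\tilde Q, k-1)$. The forward implication is easy, since quotienting by a group of order $p$ can drop the order of an element by at most a factor of $p$, so $v_p(|q(g)|) \ge v_p(|g|) - 1$.

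The reverse implication carries all the content. Writing $|g| = p^a m$ with $(p,m) = 1$, the forward direction applied to $q$ forces $a \ge k-1$, so I would only need to rule out the case $a = k-1$. The key observation is that $\tilde Q$ is the \emph{unique} subgroup of order $p$ in $G$: being central it is normal; it lies in some Sylow $p$-subgroup $P$, and by cyclicity of $P$ it is the unique subgroup of order $p$ of $P$; since any other Sylow $p$-subgroup is conjugate to $P$ and $\tilde Q$ is normal, the same holds for every Sylow $p$-subgroup, and every order-$p$ subgroup of $G$ is contained in some Sylow $p$-subgroup. In the hypothetical case $a = k-1$, the hypothesis $k \ge 2$ gives $a \ge 1$, so $\langle g \rangle$ contains a (necessarily unique) subgroup of order $p$, which by the above must be $\tilde Q$. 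Hence $\tilde Q \subseteq \langle g \rangle$, and therefore $|q(g)| = |g|/|\tilde Q| = p^{k-2} m$, contradicting $p^{k-1} \mid |q(g)|$. So $a \ge k$, as required.

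The main obstacle is precisely this identification of $\tilde Q$ as the unique $C_p$-subgroup of $G$, which is where both hypotheses (cyclicity of the Sylow $p$-subgroups and centrality of $\tilde Q$) are used jointly; the hypothesis $k \ge 2$ is needed only to ensure that the borderline case $a = k-1$ already forces $p \mid |g|$ so that the unique-$C_p$ argument applies. Everything else is a formal consequence of the fibre-counting described in the first paragraph.
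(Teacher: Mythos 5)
Your proof is correct. It reaches the same conclusion as the paper but by a slightly more direct route. The paper handles the hard case (where $v_p(|q(g)|) = k-1$) by forming the auxiliary subgroup $H := q^{-1}(\langle q(g)\rangle)$, observing it is abelian (central-by-cyclic) with cyclic Sylow $p$-subgroup of order $p^k$, hence $H \cong C_{p^k}\times A$ with $(p,|A|)=1$, and then reading off the orders of preimages from this explicit description. You instead prove directly that $\tilde Q$ is the unique order-$p$ subgroup of all of $G$ (using centrality plus cyclicity of Sylow $p$-subgroups), which lets you conclude $\tilde Q\le\langle g\rangle$ for a problematic $g$ and derive the contradiction $v_p(|q(g)|)=k-2$ without introducing $H$ at all. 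The underlying insight — that there is a unique $C_p$ in $G$ and it is forced to lie in $\langle g\rangle$ — is the same in both arguments, but your formulation makes it explicit and avoids the structure theorem for the abelian subgroup $H$; the paper's formulation has the merit of displaying the fibre structure completely, which makes the counting transparent. One small wording issue: where you write that ``the forward direction applied to $q$ forces $a\ge k-1$,'' what you actually use is simply that $|q(g)|$ divides $|g|$, which is more elementary than the forward implication and worth stating as such.
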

\begin{proof}
	Consider the sequence
	\[1 \to \tilde Q \to G \overset{q}{\to} G/\tilde Q\to 1 .\]
	Running over elements $h \in G/\tilde Q$, we find that if $p^k$ divides $|h|$, then all $p$ preimages have order divisible by $p^k$ and conversely if $p^{k-1}\nmid |h|$, then none do.
	
	Now assume that $p^{k-1}$ is the maximal power of $p$ dividing $|h|$. Then, $H:=q^{-1}(\la h \ra)$ is a subgroup of $G$ with Sylow $p$-subgroups of order $p^k$. Thus, $H$ must be of the form $C_{p^k}\ti A$ with $p$ not dividing the order of $A$. Via this description $q$ is the quotient $C_{p^k}\ti A \to C_{p^k}/C_p \ti A$. So, as $h$ has order divisible by $p^{k-1}$, all elements in the fibre of $h$ have order divisible by $p^k$. In conclusion,
	\[ \pp(G,k)=p\cdot \pp(G/\tilde Q,k-1) .\qedhere \] 
\end{proof}

\begin{proof}[Proof of Prop.\ \ref{avgformula}]
	We first show the formula when $k=1$. The formula trivially holds when $r=0$. If $r\ge 1$, we may apply Claim 2 to assume that $G$ contains a central subgroup $Q$ isomorphic to $C_p$. When $r=1$, the formula is given by Claim 3. Now assume $r\ge 2$. We wish to show that
	\[ \pp(G,1)= \left( \frac{p^r-1}{p^r}\right)|G|. \]
	Applying Claim 3 and the inductive hypothesis,
	\begin{align*}
		\pp(G,1)&=\pp(G/Q,1)+\left(\frac{p-1}{p}\right)|G|\\
		&=\left( \frac{p^{r-1}-1}{p^{r-1}}\right)\frac{|G/Q|\cdot |Z_{G/Q}(Q')|}{|N_{G/Q}(Q')|}+\left(\frac{p-1}{p}\right)|G|,\\
		\intertext{where $Q'$ is a choice of $C_p$-subgroup of $G/Q$. Let $P$ denote the preimage of $Q'$ in $G$. Recall, for a chain of subgroups $A\ge B \ge C$ with $C\unlhd A$, then $N_A(B)/C \cong N_{A/C}(B/C)$. Moreover if $C\subseteq Z(A)$, then $Z_A(B)/C=Z_{A/C}(B/C)$. Thus, $|G/Q : N_{G/Q}(Q')|=|G: N_G(P)|$ and $|Z_{G/Q}(Q')|=\frac{1}{p} |Z_G(P)|$. So}
		\pp(G,1)&=\left( \frac{p^{r-1}-1}{p^{r-1}}\right)\frac{|G|\cdot |Z_G(P)|}{|N_G(P)| \cdot p}+\left(\frac{p-1}{p}\right)|G|\\
		&= \left( \frac{p-1}{p}+\frac{p^{r-1}-1}{p^r}\right)|G|\\
		&=\left( \frac{p^{r}-1}{p^{r}}\right) |G|
	\end{align*} 
	as required, where we used the independence asserted in Claim 1 to show
	\[ \frac{|Z_G(P)|}{|N_G(P)|} =\frac{|Z_G(Q)|}{|N_G(Q)|}=1.\]
	Thus, the formula holds when $k=1$.
	
	Now assume $k>1$ and that the formula holds for all groups and indices $\ell < k$. By Claim 2, we are reduced to verifying the formula for groups with a central subgroup $Q$ isomorphic to $C_{p^k}$.
	
	Fix a subgroup $\tilde Q\le Q $ of order $p$. Applying Claim 4,
	\begin{align*} 
		\pp(G,k)&=p\cdot \pp(G/\tilde Q,k-1)\\
		&=p\left( \frac{p^{(r-1)-(k-1)+1}-1}{p^{(r-1)-(k-1)+1}}\right) \frac{|G/\tilde Q|\cdot |G/\tilde Q|}{|G/\tilde Q|}\\
		&= \left( \frac{p^{r-k+1}-1}{p^{r-k+1}}\right) |G| \end{align*}
	which is the required formula. 
\end{proof}

\begin{proof}[Proof of Theorem \ref{nonvanishing}]
	By Remarks \ref{relp} and Corollary \ref{denoms}, we need only prove that if $G$ has a non-cyclic $p$-hypo-elementary subgroup $v_p(C_{\theta_G}(\one_G))<0$. Whilst, by Remark \ref{noncycpylow}, we may assume $G$ has cyclic Sylow $p$-subgroups.
	
	Applying Lemma \ref{regconstfirstform}, we want to show for such $G$ that
	\[\frac{1}{|G|}\sum_{g \in G}v_p(|g|)+\frac{\mathcal{P}(G,1)}{|G| \cdot (p-1)}\le v_p(|G|),\]
	with equality if and only if all $p$-hypo-elementary subgroups of $G$ are cyclic. Proposition \ref{avgformula} shows that if $G$ has Sylow $p$-subgroups of order $p^r$, then
	\begin{align*}\sum_{g \in G}v_p(|g|)&=\sum_{k=1}^r\pp(G,k)= \sum_{k=1}^r  \left(\frac{p^{r-k+1}-1}{p^{r-k+1}}\right)  \frac{|G||Z_G(Q)|}{|N_G(Q)|} 
		\intertext{and}
		\frac{\mathcal{P}(G,1)}{(p-1)}&= \left( \frac{p^r-1}{(p-1)p^r}\right) \cdot \frac{|G||Z_G(Q)|}{|N_G(Q)|},
	\end{align*}
	where $Q$ denotes any choice of subgroup of $G$ isomorphic to $C_{p}$. Thus,
	\begin{align*}\frac{1}{|G|}\sum_{g \in G}v_p(|g|)+\frac{\mathcal{P}(G,1)}{|G| \cdot (p-1)}&=\left(\left(\sum_{i=1}^r\frac{p^{r-i+1}-1}{p^{r-i+1}}  \right) +\frac{p^r-1}{(p-1)p^r}\right) \frac{|Z_G(Q)|}{|N_G(Q)|}\\
		&=\left(\sum_{i=1}^r \frac{p^{r-i+1}-1}{p^{r-i+1}}+\sum_{i=1}^r\frac{1}{p^{r-i+1}}\right)\frac{|Z_G(Q)|}{|N_G(Q)|}\\
		&=r\cdot \frac{|Z_G(Q)|}{|N_G(Q)|}.
	\end{align*}
	So that
	\[ v_p(C_{\theta_G}(\one_G))=-r\cdot \left(1- \frac{|Z_G(Q)|}{|N_G(Q)|}\right) \]
	whenever $G$ has cyclic Sylow $p$-subgroups. Finally, note that a group with cyclic Sylow $p$-subgroups has no non-cyclic $p$-hypo-elementary groups if and only if all subgroups of order $pq$ with $q$ a prime distinct to $p$ are isomorphic to $C_p\ti C_q$. The latter holds if and only if the normaliser of each $C_p$-subgroup is equal to its centraliser. So indeed $v_p(C_{\theta_G}(\one _G))<0 \iff $ $G$ contains a non-cyclic Sylow $p$-subgroup, otherwise it is zero.
\end{proof}
It is worth stating that during the proof we derived the following corollary:
\begin{corol}
	For any finite group $G$ and prime $p$ such that the Sylow $p$-subgroups of $G$ are cyclic,
	\[ v_p(C_{\theta_G}(\one_G))= -r\cdot\left(1- \frac{|Z_G(Q)|}{|N_G(Q)|}\right).\]
	Here $Q$ denotes any choice of non-trivial $p$-subgroup of $G$ unless $p \nmid |G|$ in which case $Q=\{1\}$ and $v_p(C_{\theta_G}(\one_G))=0$.
\end{corol}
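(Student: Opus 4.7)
The plan is to read this corollary off directly from the computation already carried out inside the proof of Theorem \ref{nonvanishing}, simply packaging the intermediate identity as a standalone statement. The starting point is Lemma \ref{regconstfirstform}, which gives, for any finite group $G$,
\[v_p(C_{\theta_G}(\one_G)) = -v_p(|G|) + \frac{1}{|G|}\sum_{g\in G}v_p(|g|) + \frac{\pp(G,1)}{|G|(p-1)}.\]
Under the cyclic Sylow $p$-subgroups hypothesis, Proposition \ref{avgformula} provides closed forms for each $\pp(G,k)$ in terms of the ratio $|Z_G(Q)|/|N_G(Q)|$, which is independent of the choice of non-trivial $p$-subgroup $Q$.

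Writing $r=v_p(|G|)$ and expanding $\sum_{g\in G}v_p(|g|) = \sum_{k=1}^{r}\pp(G,k)$, I would substitute the formula of Proposition \ref{avgformula} to get
\[\frac{1}{|G|}\sum_{g\in G}v_p(|g|) + \frac{\pp(G,1)}{|G|(p-1)} = \frac{|Z_G(Q)|}{|N_G(Q)|}\left(\sum_{k=1}^{r}\frac{p^{r-k+1}-1}{p^{r-k+1}} + \frac{p^{r}-1}{(p-1)p^{r}}\right).\]
The only genuine calculation is then to recognise the geometric series $\frac{p^{r}-1}{(p-1)p^{r}} = \sum_{j=1}^{r}p^{-j} = \sum_{k=1}^{r}p^{-(r-k+1)}$, which allows the two inner sums to be combined telescopically:
\[\sum_{k=1}^{r}\frac{p^{r-k+1}-1}{p^{r-k+1}} + \sum_{k=1}^{r}\frac{1}{p^{r-k+1}} = \sum_{k=1}^{r}1 = r.\]
Combining with $-v_p(|G|)=-r$ yields exactly $-r\bigl(1-|Z_G(Q)|/|N_G(Q)|\bigr)$, the claimed formula.

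For the boundary case $p\nmid|G|$, I would note that every subgroup $H$ appearing in $\theta_G$ has order coprime to $p$, so each factor $|H|$ in the evaluation of $C_{\theta_G}(\one_G)$ is a $p$-unit and the $p$-valuation vanishes; this matches setting $r=0$ and $Q=\{1\}$ in the formula. All the substantive work (Lemma \ref{regconstfirstform}, the independence of $|N_G(Q)|/|Z_G(Q)|$ from Claim 1, and the counting formula of Proposition \ref{avgformula}) has already been done, so no real obstacle remains—the proof is just the bookkeeping sketched above.
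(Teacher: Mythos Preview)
Your proposal is correct and follows exactly the same route as the paper: the corollary is stated immediately after the proof of Theorem \ref{nonvanishing} with the remark that it was derived in the course of that proof, and the derivation there is precisely the substitution of Proposition \ref{avgformula} into the formula of Lemma \ref{regconstfirstform} followed by the same telescoping of the geometric series that you carry out. There is nothing to add.
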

When $G$ doesn't have cyclic Sylow $p$-subgroups, we can only say that $v_p(C_{\theta_G}(\one_G))\le -\frac{p}{|G|}$. 
\begin{example}\label{phyporegconstcalc}
	Let $G$ be a $p$-hypo-elementary group with a non-trivial cyclic Sylow $p$-subgroup. Then $G$ is of the form $C_{p^r}\rti C_n$ with $(p,n)=1$. Let $S$ denote the kernel of the map $C_n\to \Aut(C_{p^r})$ defining the semi-direct product and $s=|S|$. Then,
	\[ v_p(C_{\theta_G}(\one _G))=-r(1- \frac{s}{n}),  \]
	as the centraliser of $C_{p^r}$ is $C_{p^r}\ti S\le G$ (the action is trivial) and $C_{p^r}\unlhd G$.
	
	We can also verify this directly. In Example, \ref{artinrelcyclic} we saw that the Artin relation of such a $G$ is given by
	\[\theta_G =[C_{p^r}\rti C_n]-[C_n]+\frac{s}{n}[S]-\frac{s}{n}[C_{p^r}\ti S].\]
	So applying formula \eqref{oneform}
	\begin{align*}C_{\theta_G}(\one_G)&= \frac{\left(\frac{1}{|G|}\right)\cdot\left(\frac{1}{|S|}\right)^\frac{s}{n}}{\left(\frac{1}{|C_n|}\right) \left( \frac{1}{|C_{p^r}\ti S|}\right)^\frac{s}{n}},
		\intertext{so that indeed}
		v_p(C_{\theta_G}(\one_G))&=-r+\frac{s}{n}\cdot r.
	\end{align*}
	
\end{example}
\section{Yakovlev's theorem and the permutation pairing}
If a group $G$ has cyclic Sylow $p$-subgroups, then Theorem \ref{tspairingnondeg} shows that the permutation pairing is non-degenerate. As an application, if $G$ is in addition abelian, dihedral, or more generally satisfies the conditions of Theorem \ref{main}, then we exhibit an explicit list of invariants which determine the isomorphism class of an arbitrary rationally self-dual $\z_p[G]$-lattice. This requires also understanding the theory over $\zp$, which is somewhat less well behaved.
\label{arblat}
\subsection{Trivial source modules}\label{tsmods}
As we shall see, existing work reduces us to dealing with \emph{trivial source modules}, which we now introduce.
\begin{mydef}\label{trivsource}
	For any finite $G$, we say that an $\R[G]$-module has \emph{trivial source}, or is a \emph{trivial source module}, if it is a direct summand of a permutation module. In other sources, trivial source modules may be referred to as any of relatively projective, permutation projective, $p$-permutation or invertible. For $\R$ any ring, we denote the subalgebra of $A(\R[G])$ generated by the trivial source modules by $A(\R[G],\triv)$.
	
	 Our definition is slightly non-standard (cf.\ \cite[Def.\ 3.11.1]{Benson95}). When $\R=\z_p$, it coincides with the usual definition \cite[Lem.\ 3.11.2]{Benson95}, but when $\R=\zp$, due to the failure of Krull-Schmidt \cite{Beneish}, decomposition by vertices fails (see Example \ref{vertex failure}) and what we call an indecomposable trivial source module over $\zp$ need not have source which is trivial. However, in our definition, for $M$ over $\zp$, $M$ is a trivial source module if and only if $M\ot \z_p$ is a trivial source module.
\end{mydef}
\begin{example}
	Let $\R=\zp$ and $G=C_p$. Up to isomorphism, there are 3 indecomposable $\zp[C_p]$-lattices
	\[ \one_G, I_G, \zp[C_p],  \]
	the trivial module, the augmentation ideal of $\zp[C_p]$, and the regular representation \cite[Thm.\ 2.6]{HellerReiner}. The indecomposable trivial source modules are the summands of $\one_{\{1\}} \up^{C_p}\cong \zp[C_p]$ and $ \one_{C_p}\up^{C_p}=\one_{C_p}$. So the trivial source indecomposables are precisely $\one_G$ and $\zp[C_p]$, and $A(\zp[G],\triv)=A(\zp[G],\perm)$ is two dimensional. The same holds for $\R = \z_p$.
\end{example}
\begin{mydef}\label{tsnotat}
	Let $M$ be any $\z_p[G]$-lattice. We define $M_\triv$ to be the submodule generated by all indecomposable trivial source summands of $M$. We call $M_\triv$ the \emph{trivial source part} of $M$ and call the submodule $M_\nt$ generated by the indecomposable summands which are not trivial source summands the \emph{non-trivial source part}. By the Krull-Schmidt property of $\z_p[G]$-modules, we obtain the \emph{trivial source decomposition} $M=M_\triv \op M_\nt$.
\end{mydef}
\begin{remark}
	Over $\zp$, lattices do not admit unique decomposition \cite{Beneish} and there is no general analogue of the trivial source decomposition. In particular, for $M$ over $\zp$, the trivial source decomposition of $M\ot \z_p$ need not be defined over $\zp$. This occurs for the group $C_3\ti C_4$ when $p=3$ (see \cite[Ch.\ 2]{thesis}).
\end{remark}

\subsection{Yakovlev's result}\label{yakres}
Now assume that $G$ has cyclic Sylow $p$-subgroups.
\begin{notat}
	Let $P$ be a choice of Sylow $p$-subgroup of $G$. Let $r$ be such that $P \cong C_{p^r}$, and for $0\le i \le r$, let $P_i\le P$ denote the subgroup of order $p^i$.
\end{notat}
Note that for a $\z_p[G]$-lattice $M$, $H^1(P_i,M)$ is a $N_G(P_i)$-module.

\begin{samepage}
	\begin{thm}[Yakovlev {\rm \cite[Thm.\ 2.1]{Yak}}]\label{yak} Let $G$ be a finite group and $p$ a prime such that $G$ has cyclic Sylow $p$-subgroups. If $M$ is a $\z_p[G]$-lattice, then the isomorphism class of $M_\nt$ is determined by the following diagram,\nopagebreak \vspace{-5pt}
		\begin{figure}[H]
			\begin{tikzcd}
				H^{1}(P_r,M) \ar[shift left=0.6ex]{r}{\res}& H^{1}(P_{r-1},M)\ar[shift left=0.6ex]{r}{\res}\ar[shift left=0.6ex]{l}{\cores} &\ar[shift left=0.6ex]{l}{\cores} \quad ... \quad \ar[shift left=0.6ex]{r}{\res} &  H^{1}(P_{0},M).\ar[shift left=0.6ex]{l}{\cores}
			\end{tikzcd}\noindent\vspace{-10pt}
			\caption{Yakovlev diagram}\label{yakdiag}
		\end{figure}
\end{thm}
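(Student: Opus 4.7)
\emph{Plan.} First observe that trivial source summands contribute nothing to the Yakovlev diagram. By Krull--Schmidt each indecomposable summand of $M_\triv$ is a summand of some $\one\up^G_H$, and Mackey's formula gives $\one\up^G_H\down_{P_i}=\bigoplus_{g\in P_i\bac G/H}\one\up^{P_i}_{P_i\cap gHg^{-1}}$; Shapiro's lemma then identifies $H^1(P_i,-)$ with a sum of groups $H^1(P_i\cap gHg^{-1},\z_p)$, which vanish for $i\ge 1$ because $\z_p$ is torsion-free and $P_i\cap gHg^{-1}$ is a finite $p$-group (the case $i=0$ being automatic). By additivity in $M$ of cohomology and of $\res,\cores$, the Yakovlev diagrams of $M$ and $M_\nt$ agree, so it suffices to show $M_\nt$ is determined by its Yakovlev diagram.

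Next I would pass to a local setting. Since $G$ has cyclic Sylow $p$-subgroups, every indecomposable non-trivial-source summand $U$ of $M_\nt$ has vertex $P_i$ for some $i\ge 1$, and Green correspondence identifies such $U$ bijectively with an indecomposable $N_G(P_i)$-lattice $f(U)$ having the same vertex. Green correspondence is compatible with restriction to $N_G(P_i)$, and $H^1(P_i,M)=H^1(P_i,M\down_{N_G(P_i)})$ as $N_G(P_i)$-modules, so the Yakovlev diagram transports across the correspondence. This reduces the problem to the case where $G$ is $p$-hypo-elementary, i.e.\ $G=P\rtimes C$ with $P$ cyclic.

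In the $p$-hypo-elementary case, indecomposable $\z_p[G]$-lattices admit an explicit classification (Heller--Reiner, refined via the theory of lattices over Brauer tree orders for blocks of cyclic defect). The non-trivial-source indecomposables with vertex $P_i$ are parametrised by certain combinatorial data: an $N_G(P_i)/P_i$-module together with glueing information describing how the lattice interpolates between consecutive cohomology levels. The final and genuinely hardest step is to recover this data from the Yakovlev diagram: one must show that the sequence $(H^1(P_j,U))_j$, equipped with the restriction and corestriction arrows and $N_G(P_j)$-actions, is a complete invariant, not merely that the cohomology is non-zero in the correct range of $j$. Verifying that these equivariant connecting maps separate non-isomorphic indecomposables is the technical heart of Yakovlev's argument, and is precisely where the cyclic Sylow hypothesis is indispensable.
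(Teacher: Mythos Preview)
The paper does not prove this theorem: it is quoted verbatim from Yakovlev \cite[Thm.\ 2.1]{Yak} and used as a black box in the proof of Theorem~\ref{main}. There is therefore no ``paper's own proof'' to compare against.

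As to your sketch itself: the first paragraph is correct and is exactly the observation the paper records immediately after the statement, namely that $\ker(\mathrm{Yak})\supseteq A(\z_p[G],\triv)$; your Shapiro/Mackey argument is a clean way to see this. The second paragraph is where the outline becomes loose. Green correspondence sends an indecomposable with vertex $P_i$ to an indecomposable over $N_G(P_i)$, but $N_G(P_i)$ is not in general $p$-hypo-elementary (it contains the full Sylow $P$, but $N_G(P_i)/P$ need not be cyclic), so the reduction ``to the case where $G$ is $p$-hypo-elementary'' is not justified as written. One also has to check that Green correspondence is compatible not just with the individual $H^1(P_i,-)$ but with the restriction and corestriction maps between different levels $i$, which is more delicate than the single-vertex statement. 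Finally, your third paragraph openly defers the substantive content (``the technical heart of Yakovlev's argument''), so what you have is an outline rather than a proof. Since the paper itself treats Yakovlev's theorem as input, this is not a defect relative to the paper, but you should be aware that the sketch as it stands does not constitute an independent proof.
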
\end{samepage}
To be precise, when we say ``determined by'' we mean that, if $M'$ is another $\z_p[G]$-lattice for which there are $\z_p[N_G(P_i)]$-module isomorphisms $\kappa_i : H^{1}(P_{i},M) \to  H^{1}(P_{i},M') , 0 \le i \le n$ which commute with restriction and corestriction in the above diagram, then $M_\nt\cong M'_\nt$.
\begin{constr}
	Call any diagram of the form\vspace{-5pt} 
	\begin{figure}[H]
		\begin{tikzcd}
			\bullet \ar[shift left=0.6ex]{r}{a_r}& \bullet\ar[shift left=0.6ex]{r}{a_{r-1}}\ar[shift left=0.6ex]{l}{b_r} &\ar[shift left=0.6ex]{l}{b_{r-1}} \quad ... \quad \ar[shift left=0.6ex]{r}{a_1} &  \bullet\ar[shift left=0.6ex]{l}{b_1},
		\end{tikzcd}\vspace{-10pt}
	\end{figure}\noindent
	with the $i$\th term a finite $N_G(P_{r-i+1})$-module and $a_i,b_i$ homomorphisms of abelian groups, a \emph{Yakovlev diagram}. For any $M$, Figure \ref{yakdiag} is of this form and we refer to it as the \emph{Yakovlev diagram of $M$}.
	
	There is an obvious notion of direct sum of such diagrams. Let $\mathcal{C}$ denote the free $\q$-vector space on isomorphism classes of such diagrams subject to identifying addition of diagrams with addition of elements of $\mathcal{C}$. Taking Yakovlev diagrams defines a canonical map
	\[ \mathrm{Yak} \colon   A(\z_p[G])\to \mathcal{C}.  \]
	Yakovlev's theorem is now the assertion that $\ker (\mathrm{Yak})= A(\z_p[G],\triv)$. Yakovlev also gives a converse describing which Yakovlev diagrams arise as the cohomology of $\z_p[G]$-lattices, but we do not need this.

\end{constr}

Recall that $A(\zp[G],\perm)=A(\z_p[G],\perm)$ as subspaces of $A(\z_p[G])$. We are now able to correctly formulate the theorem outlined in the introduction:

\begin{thm}
\label{main}
Let $G$ be any finite group and $p$ a prime such that $G$ has cyclic Sylow $p$-subgroups and for which $A(\z_p[G],\perm) = A(\zp[G],\triv)$. Then, the isomorphism class of any rationally self-dual $\z_p[G]$-lattice $M$ is determined by
\begin{enumerate}[i)]
	\item the isomorphism class of $M\ot \q_p$ as a $\q_p[G]$-module,
	\item the valuations $v_p(C_{\theta_H}(M))$ of the regulator constants of the Artin relations for ${H \in \nchyp_p(G)}$,
	\item the Yakovlev diagram \vspace{-5pt}
	\begin{figure}[H]
		\begin{tikzcd}
		H^{1}(P_r,M) \ar[shift left=0.6ex]{r}{\res}& H^{1}(P_{r-1},M)\ar[shift left=0.6ex]{r}{\res}\ar[shift left=0.6ex]{l}{\cores} &\ar[shift left=0.6ex]{l}{\cores} \quad ... \quad \ar[shift left=0.6ex]{r}{\res} &  H^{1}(P_{0},M).\ar[shift left=0.6ex]{l}{\cores}
		\end{tikzcd}\vspace{-10pt}
	\end{figure}\noindent
\end{enumerate}
\end{thm}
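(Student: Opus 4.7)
The plan is to assemble three results. First, Krull--Schmidt for $\z_p[G]$-lattices separates $M$ and $N$ into their trivial and non-trivial source parts; second, Yakovlev's theorem \ref{yak} identifies the non-trivial source parts from (iii); and third, the non-degeneracy of the permutation pairing (Theorem \ref{tspairingnondeg}) combined with hypotheses (i) and (ii) forces the trivial source parts to agree. The hypothesis $A(\z_p[G],\perm) = A(\zp[G],\triv)$ is precisely what lets the third ingredient see the trivial source parts.

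More concretely, I would first apply Krull--Schmidt (Remark \ref{infrepring}) to write $M = M_\triv \oplus M_\nt$ and $N = N_\triv \oplus N_\nt$ as in Definition \ref{tsnotat}. Theorem \ref{yak} then gives $M_\nt \cong N_\nt$ directly from (iii), so in $a(\z_p[G])$ the difference simplifies as $[M] - [N] = [M_\triv] - [N_\triv]$. Using that every trivial source $\z_p[G]$-lattice is the scalar extension of a $\zp[G]$-trivial source module (by standard lifting of trivial source modules through their reduction mod $p$), the class $[M] - [N]$ lies in $A(\zp[G],\triv)$, and hence in $A(\z_p[G],\perm)$ by the hypothesis.

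Next, I would pair $[M] - [N]$ against every $H \in P'(G)$ via the symmetric pairing $\la\ ,\ \ra_*$ of Section \ref{reginv}. For cyclic $H$, $\la H, -\ra_*$ is a character inner product after base change to $\q_p$, so (i) forces it to vanish on $[M] - [N]$. For non-cyclic $p$-hypo-elementary $H$, $\la H, -\ra_* = v_p(C_{\theta_H}(-))$ on rationally self-dual classes, and bi-additivity of regulator constants (Lemma \ref{regcontoolkit}\,i),ii)) together with (ii) makes this vanish as well. Thus $[M] - [N] \in A(\z_p[G],\perm)$ pairs trivially with the full basis $P'(G)$.

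Finally, Lemma \ref{tsdet} combined with Theorem \ref{tspairingnondeg} says that $\la\ ,\ \ra_*$ is non-degenerate on $P'(G) \cong A(\z_p[G],\perm)$ under the cyclic Sylow hypothesis, forcing $[M] = [N]$ in $A(\z_p[G])$. Since $a(\z_p[G])$ is free abelian by Krull--Schmidt (Remark \ref{infrepring}), this promotes to the same equality in $a(\z_p[G])$, and hence $M \cong N$. The step I expect to require the most care is showing that $[M_\triv] - [N_\triv]$ actually lies in $A(\zp[G],\triv)$, since this is what makes the hypothesis available; once inside $A(\z_p[G],\perm)$, the non-degeneracy result does all the work.
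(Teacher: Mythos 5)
Your overall strategy matches the paper's: Yakovlev handles the non-trivial source part, the hypothesis plus the non-degeneracy of the permutation pairing handle the trivial source part. The final pairing step (pairing $[M]-[N]$ against $P'(G)$ and invoking Lemma \ref{tsdet}/Theorem \ref{tspairingnondeg}) is correct. But there is a genuine gap in the step you yourself flag as delicate, and your proposed justification for it is false.

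You claim that ``every trivial source $\z_p[G]$-lattice is the scalar extension of a $\zp[G]$-trivial source module,'' and deduce $[M]-[N]=[M_\triv]-[N_\triv]\in A(\zp[G],\triv)$. This claim fails, even under the hypotheses of the theorem. Take $G=C_5\times C_4$ and $p=5$: this is abelian with cyclic Sylow $5$-subgroup, so by Example \ref{abelian} the hypothesis $A(\z_p[G],\perm)=A(\zp[G],\triv)$ holds. Let $\chi$ be the inflation to $G$ of a faithful character of $C_4$. Then $\chi$ is an indecomposable trivial source $\z_5[G]$-lattice (it is a summand of $\one\up^G_{C_5}$), but its character takes values in $\mu_4\not\subseteq\q$, so by \cite[Prop.\ 5.7]{ReinerSurvey} it is \emph{not} a scalar extension of any $\zp[G]$-lattice. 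The ``lifting through reduction mod $p$'' argument you gesture at addresses lifting from $\F_p$ to $\z_p$, not descent from $\z_p$ to $\zp$; the latter requires $\q$-rationality of the character, which individual trivial source summands of $M$ and $N$ need not have.

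The conclusion $[M]-[N]\in A(\zp[G],\triv)$ is still true, but for a different reason. By (i), $[M]-[N]$ has \emph{zero} (in particular rational) character, and after Yakovlev it lies in $A(\z_p[G],\triv)$. The needed fact is that any element of $A(\z_p[G],\triv)$ with rational character lies in $A(\zp[G],\triv)$: given such a difference $[W]-[V]$ with $W,V$ trivial source over $\z_p$, complement $V$ inside a permutation module $T=V\oplus V'$, so $[W]-[V]=[W\oplus V']-[T]$; then $T$ is already in $A(\zp[G],\triv)$, and $W\oplus V'$ is trivial source with rational character, hence defined over $\zp$ by \cite[Prop.\ 5.7]{ReinerSurvey}. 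With this replacement your argument goes through; as it stands, the descent step rests on a false premise.
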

\begin{proof}
	By Theorem \ref{yak} the data of {\it iii)} determines the isomorphism class of $M_{\nt}$. Now fix some trivial source $\z_p[G]$-module $M'$ such that $M_\nt \oplus M'$ is rationally self-dual. Such an $M'$ exists as $M_\triv$ is an example. By linearity of regulator constants and extension of scalars, from {\it i),ii),iii)} we also obtain the regulator constants and isomorphism class of the extension of scalars of $t:=[M]-[M_\nt]-[M' ]$ (note that $t$ is rationally self-dual so that its regulator constants are defined). By construction $t\in \ker(\mathrm{Yak}\colon A(\z_p[G])\to \mathcal{C})= A(\z_p[G],\triv)$. It suffices to show that any rationally self-dual element $t\in A(\z_p[G],\triv)$ is determined by the data of {\it i),ii)}.
	
	Let $M''$ be any trivial source $\z_p[G]$-module such that $t-M''$ has rational character, again such an $M''$ certainly exists. Now, any $\z_p[G]$-lattice $N$ for which $N\ot \q_p$ is defined over $\q$ is the extension of scalars of a $\zp[G]$-lattice \cite[Prop.\ 5.7]{ReinerSurvey}. As a result, any element of $A(\z_p[G])$ with rational character is contained within $A(\zp[G])$. In particular, $t-M''\in A(\z_p[G],\triv)\cap A(\zp[G])$. We claim that this space is precisely $A(\zp[G],\triv)$, so let $W,V$ be trivial source $\z_p[G]$-modules and assume that $[W]-[V]$ has rational character. If $V$ is a summand of some permutation module $T$ with complement $V'$, then $[W\oplus V']-[T]=[W]-[V]$ and $W\oplus V'$ is a trivial source module with rational character. So, by \cite[Prop.\ 5.7]{ReinerSurvey}, $[W]-[V']$ lies in $A(\zp[G],\triv)$.
	
	Thus, $t-M''\in A(\zp[G],\triv)$ and so by assumption lies in $A(\z_p[G],\perm)$. As a result, it is determined by its regulator constants and extension of scalars (Corollary \ref{cyctsdet}). Tracing back, we find that $M$ is determined by the data of {\it i),ii),iii)}.
\end{proof}
Since {\it i),ii),iii)} are isomorphism invariants, two $\z_p[G]$-lattices are isomorphic if and only if {\it i),ii),iii)} are the same for both lattices.
\begin{remark}\label{arbitlat}
	The restriction on being rationally self-dual is a somewhat mild one. For example, if $M_1,M_2$ are any two $\z_p[G]$-lattices, then $M_1\cong M_2$ if and only if, {\it i),iii)} of Theorem \ref{main} coincide for $M_1,M_2$ and
		\begin{enumerate}[i)]
		\item[{\it ii')}] there exists some $\z_p[G]$-lattice $N$ such that $M_1\oplus N$ and $M_2\op N$ are both rationally self-dual, and the valuations $v_p(C_{\theta_H }(M_i\op N) )$ of the regulator constants of the Artin relations of $M_i\oplus N$ are equal for all $H\in \nchyp_p(G)$.
	\end{enumerate}
		Note, it is easy to determine if there exists a $\z_p[G]$-lattice $N$ such that $M_1\oplus N$ and $M_2\oplus N$ are rationally self-dual using {\it i)}. If the  $v_p(C_{\theta_H}(M_i\oplus N))$ are equal for one such $N$, then they are equal for all.
\end{remark}
The condition that $A(\z_p[G],\perm)=A(\zp[G],\triv)$ is investigated in the next subsection. For reference, we shall see that the equality can be checked on restriction to the $p$-hypo-elementary subgroups and that dihedral groups, abelian groups with cyclic Sylow $p$-subgroups and groups of order coprime to $p-1$ all have this property, but $C_p\rti C_{p-1}$ for $p\ge 5$ does not. In Section \ref{D2pfullex}, we provide a worked example of Theorem \ref{main} for dihedral groups of order $2p$ with $p$ odd.
\begin{remark}
	Theorem \ref{main} is sharp in the following sense. If $A(\z_p[G],\perm)\subsetneq  A(\zp[G],\triv)$ or the permutation pairing is degenerate, then rationally self-dual $\z_p[G]$-lattices are not determined by {\it i),ii),iii)}. The first case can be seen by comparing the dimension of $A(\z_p[G],\triv)$ and the maximum number of linear conditions on elements of $A(\z_p[G],\triv)$ we could possibly obtain from {\it i),ii)} using Lemmas \ref{formulafortriv}, \ref{condeqcond} and the formulae of Section \ref{relinp}. In the second case, not even all permutation lattices can be distinguished (see Lemma \ref{tsdet}).
\end{remark}
\subsection{Species and trivial source modules over $\zp$}
Now let $G$ be any finite group. 
\begin{mydef} A \emph{species}\footnote{It is more common to define a species as a ring homomorphism from the trivial source ring over the ring of integers of a sufficiently large extension of $\q_p$, but this is not necessary for our purposes.} is a ring homomorphism $A(\z_p[G],\triv)\to \co$.
\end{mydef}
\begin{example}\label{trspecies}
	For any $g\in G$, $\tr(g \mid -)$ defines a ring homomorphism $A(\z_p[G]) \to \co$ and so also a species.
\end{example}
\begin{mydef}
	For $H$ a subgroup of $G$, we say that an indecomposable trivial source $\z_p[G]$-lattice $M$ has vertex $H$ if $M$ is a direct summand of $\one \up^G_H$ but not of $\one \up^G_{H'}$ for any $H'\lneq H$. The vertices of $M$ form a conjugacy class of subgroups and only $p$-groups appear as vertices \cite[Prop.\ 3.10.2]{Benson95}. For an arbitrary $\z_p[G]$-lattice $M$, we call the summand generated by the indecomposables with vertex $P$ the \emph{vertex $P$} summand of $M$, in this way we obtain a decomposition of $M$ indexed by vertices.
\end{mydef}
\begin{constr} Consider pairs $(P,g)$, where $P\le G$ is a $p$-group and $g$ an element of $N_G(P)$ of order coprime to $p$, up to simultaneous conjugacy. Then $H:=\la P,g\ra$ is $p$-hypo-elementary and to any such pair we may associate a species $t_{(P,g)}$ as follows. Consider the composite
\[
			A(\z_p[G],\triv) \to A(\z_p[H],\triv) \to
			A(\z_p[H/P],\triv).
\]
	Here the first map is restriction. The second map sends a lattice $M$ to its vertex $P$ summand $N$, which, as $M$ is trivial source, is inflated from $H/P$ so can be considered as an $H/P$-module. We define $t_{(P,g)}\colon A(\z_p[G],\triv)\to \co$ to be the postcomposition with $\tr(g \mid -)$, i.e.\ $t_{(P,g)}(M)$ is the trace of $g$ acting on $N$.
\end{constr}
\begin{example}\label{trasspecies}
	For any $g\in G$, the species defined by $\tr(g \mid -)$ is equal to $t_{(P,g^{|g|})}$, where $|P|$ is the Sylow $p$-subgroup of $\la g \ra$.
\end{example}	
The $t_{(P,g)}$ need not be distinct, but all species arise in this way:
\begin{thm}[Conlon]\label{conlon}
	For any finite group $G$ and prime $p$, there is an inclusion
	\[ \prod t_{(P,g)}\colon A(\z_p[G],\triv) \to \prod_{(P,g)} \co.   \] 
\end{thm}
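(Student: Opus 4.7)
The plan is to combine the vertex decomposition of trivial source modules with Green correspondence in order to reduce injectivity to the classical linear independence of Brauer characters. Let $\mathcal{P}$ denote a set of representatives of the $G$-conjugacy classes of $p$-subgroups of $G$. Every indecomposable trivial source $\z_p[G]$-lattice has a vertex that is a $p$-subgroup well-defined up to $G$-conjugacy, so there is a direct sum decomposition
\[ A(\z_p[G],\triv) \;=\; \bigoplus_{P\in \mathcal{P}} V_P, \]
where $V_P$ is the $\q$-span of the isomorphism classes of indecomposable trivial source lattices with vertex $P$.

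Partially order $\mathcal{P}$ by $(Q)\preceq(P)$ when $Q$ is $G$-subconjugate to $P$. I first show that with respect to this vertex decomposition the matrix of $\prod t_{(P,g)}$ is block-triangular: for an indecomposable $M\in V_P$ and $H:=\la Q,g\ra$, Mackey applied to $M\down_H$ gives that every indecomposable summand has vertex $H$-conjugate to a subgroup of $H\cap P^x$ for some $x\in G$, so if $(Q)\not\preceq(P)$ the vertex $Q$ summand of $M\down_H$ is zero and hence $t_{(Q,g)}(M)=0$. Given this, if $M=\sum_{P'} m_{P'}$ lies in the kernel and $P$ is a maximal element of $\{P'\in\mathcal{P}:m_{P'}\ne 0\}$, triangularity gives $t_{(P,g)}(M)=t_{(P,g)}(m_P)$ for every $g$; it therefore suffices to prove injectivity of $\prod_g t_{(P,g)}|_{V_P}$ for each fixed $P$.

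Fix $P$ and set $N=N_G(P)$. Green correspondence yields a bijection between the indecomposable basis of $V_P$ and the indecomposable trivial source $\z_p[N]$-lattices with vertex $P$; the remaining Mackey summands of $M\down_N$ have strictly smaller vertex and so contribute zero to $t_{(P,g)}$, hence the correspondence is compatible with the species. Since $P\unlhd N$, the permutation module $\ind_P^N \z_p$ is the inflation to $N$ of the regular representation of $N/P$, so its indecomposable summands---that is, the indecomposable trivial source $\z_p[N]$-lattices with vertex $P$---are precisely the inflations of indecomposable projective $\z_p[N/P]$-modules. Under this two-step identification, $t_{(P,g)}(M)$ becomes the ordinary trace of the image of $g$ in $N/P$ acting on the corresponding projective $\z_p[N/P]$-module, so $\prod_g t_{(P,g)}|_{V_P}$ becomes the Brauer character map on the Grothendieck group of projective $\z_p[N/P]$-modules. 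Injectivity then follows from the classical fact that the Brauer characters of the indecomposable projective modules are linearly independent as class functions on $p$-regular conjugacy classes.

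The main obstacle is to make rigorous the two compatibility claims: that $t_{(Q,g)}$ vanishes on $V_P$ whenever $(Q)\not\preceq(P)$, and that passing from $M$ to its Green correspondent (and then to the associated projective $\z_p[N/P]$-module) intertwines $t_{(P,g)}$ with the ordinary character evaluated at the image of $g$ in $N/P$. Both reduce to careful Mackey-type calculations tracking how vertices behave under restriction and how the vertex-$P$ projection interacts with the subgroups $P\subseteq H\subseteq N$; once these are in place the theorem follows formally from Brauer character theory.
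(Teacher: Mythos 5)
Your proposal is correct in outline, but it takes a genuinely different route from the paper. The paper's proof is a two-line citation: it invokes Benson \cite[Cor.\ 5.5.5]{Benson95}, which states the theorem over the ring of integers $\mathcal{O}_K$ of a sufficiently large extension $K/\q_p$, and then descends to $\z_p$ using the facts that $M\cong M'$ iff $M\otimes\mathcal{O}_K\cong M'\otimes\mathcal{O}_K$ and that Galois action preserves vertices under base change. Your argument, by contrast, is a self-contained proof that works directly over $\z_p$: filter $A(\z_p[G],\triv)$ by vertex, show $\prod t_{(Q,g)}$ is block-triangular with respect to the subconjugacy order via Mackey, reduce to a single vertex $P$ via Green correspondence to $N_G(P)$, identify vertex-$P$ trivial source $\z_p[N_G(P)]$-lattices with inflations of projective $\z_p[N_G(P)/P]$-modules, and conclude from linear independence of the Brauer characters of projective indecomposables. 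Each step you outline is correct. One detail worth being aware of when filling in the argument: over $\z_p$ (as opposed to a splitting ring) the indecomposable projectives of $\z_p[N/P]$ correspond to $\mathrm{Gal}(\bar{\F}_p/\F_p)$-orbits of absolutely irreducible Brauer characters, so their Brauer characters are sums over disjoint Galois orbits of a linearly independent set; this still gives linear independence, so your final step survives, but the family is not in general a basis for class functions on $p$-regular classes, only an independent subset. The paper's approach is shorter because it leans on a black box; yours is more illuminating and avoids the descent lemma, at the cost of needing to verify the vertex-counting and Green-correspondence compatibilities you flag as the main work.
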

\begin{proof} This is usually stated for the ring of integers $\OO_K$ of a sufficiently large extension $K/\q_p$ (see \cite[Cor.\ 5.5.5]{Benson95}). The stated version then follows as, for $K/\q_p$ and $M,M'$ any $\z_p[G]$-lattices, $M\cong M' \iff (M\ot \OO_K)\cong (M'\ot \OO_K)$ and that the action of Galois ensures vertices are preserved under base change by $\OO_K/\z_p$.
\end{proof}
\begin{remark}\label{regvsspecies}
	It is worth remarking that although species are good invariants of trivial source modules they cannot be combined with Yakovlev diagrams to give results such as Theorem \ref{main}. This is because species cannot be canonically extended beyond $A(\z_p[G],\triv)$. On the other hand, regulator constants are defined for an arbitrary rationally self-dual lattice.
\end{remark}
\begin{lemma}\label{fixedpts}
	For a permutation module $\one_K \up^G$, we find  $t_{(P,g)}(\one_K \up^G)=\# (G/K)^{H}$, where $H=\la P,g\ra$.  
\end{lemma}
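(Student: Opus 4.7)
The plan is to unwind the definition of $t_{(P,g)}$ applied to $M = \one_K\up^G$ using Mackey's formula and the structure of trivial source modules over the $p$-hypo-elementary group $H = P \rtimes \langle g\rangle$. First, Mackey gives $M\down_H = \bigoplus_{x \in H\backslash G/K} \one_{L_x}\up^H$, where $L_x := H \cap xKx^{-1}$. Since $P$ is the unique Sylow $p$-subgroup of $H$ and is normal, every $p$-subgroup of $L_x$ lies in $P$, so the Sylow $p$-subgroup of $L_x$ is $L_x \cap P$. Consequently the vertex-$P$ summand of $\one_{L_x}\up^H$ vanishes unless $P \subseteq L_x$.

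When $P \subseteq L_x$, write $L_x = P \rtimes D_x$ with $D_x \le \langle g\rangle$; then $\one_{L_x}\up^H$ is inflated from the $\z_p[H/P]$-lattice $\one_{D_x}\up^{H/P}$, which is projective because $H/P$ has order coprime to $p$. I would verify that every indecomposable summand of such an inflation has vertex exactly $P$: it has vertex at most $P$ (being a summand of $\one\up^H_P$), and if it were instead a summand of $\one\up^H_Q$ for some $Q \lneq P$, then restricting to $P$ via Mackey would exhibit it as a summand of $\bigoplus_{x \in P\backslash H/Q} \one_{xQx^{-1}}\up^P$ (using $xQx^{-1} \le P$ since $P \unlhd H$), a direct sum of non-trivial transitive permutation modules for the $p$-group $P$; none of these admit a trivial $P$-summand (as splitting the augmentation would require dividing by $[P:xQx^{-1}]$, a power of $p$ greater than one), contradicting the fact that $P$ acts trivially on the inflated module. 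Hence the vertex-$P$ summand of $\one_{L_x}\up^H$ is the whole module, which as an $H/P$-module is $\one_{L_x/P}\up^{H/P}$.

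It remains to compute $\tr(\bar g, \one_{L_x/P}\up^{H/P})$, the number of fixed points of $\bar g$ on the permutation $H/P$-set $(H/P)/(L_x/P) \cong H/L_x$. Since $H/P$ is cyclic and generated by $\bar g$, it acts transitively on $H/L_x$, so $\bar g$ permutes $H/L_x$ as a single $[H:L_x]$-cycle and contributes $1$ when $L_x = H$ and $0$ otherwise. Combining these steps,
\[ t_{(P,g)}(\one_K\up^G) = \#\{x \in H\backslash G/K : L_x = H\} = \#(G/K)^H, \]
using the standard bijection between $H$-fixed cosets in $G/K$ and double cosets $HxK$ satisfying $H \subseteq xKx^{-1}$. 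The main technical point is the vertex identification in the second paragraph; everything else is routine Mackey bookkeeping.
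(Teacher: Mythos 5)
Your argument is correct and follows exactly the same route as the paper's own proof: decompose $\one_K\up^G\down_H$ by Mackey, observe that only the double cosets with $P\subseteq L_x=H\cap xKx^{-1}$ contribute to the vertex-$P$ summand, note that those terms are inflated from $H/P$, compute the trace of $\bar g$ on the resulting cyclic-group permutation module, and translate the count back into $\#(G/K)^H$. The one place you go further than the paper is in justifying (in your second paragraph) that when $P\subseteq L_x$ the \emph{entire} module $\one_{L_x}\up^H$ is of vertex $P$; the paper asserts this without proof, and your argument—reduce to showing $\one_{Q^x}\up^P$ for $Q\lneq P$ has no trivial $P$-summand because the augmentation splitting would need $[P:Q^x]$ to be a unit in $\z_p$—is a clean way to close that gap.
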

\begin{proof}
	By definition $t_{(P,g)}$ is a function of $\one_K \up^G\down_H=\bigoplus_{K\ba G/ H}\one \up^H_{K^g\cap H}$. We claim that only the terms with $K^g\cap H =H$ have non-trivial species. Indeed, if $K^g\cap H \not \ngeq P$, then the vertex $P$ summand of $\one \up^H_{K^g\cap H}$ is zero, whilst if $K^g\cap H \ge P$, then $\one \up^H_{K^g\cap H}$ is all of vertex $P$ and is inflated from the quotient $\la g\ra$. It is then clear that $\tr(g \mid -)$ is zero if and only if $K^g\cap H \neq H$, else it is one.
	
	Finally, the number of elements of $K\ba G/H$ with $K^g\cap H =H$ is precisely the number of elements of $G/K$ fixed under $H$. 
\end{proof}
\begin{example}\label{vertex failure}
	It is not the case that the species of a trivial source lattice $M$ over $\zp$ need take only rational values. This is made possible by the failure of Krull-Schmidt over $\zp$. For example, if $p\ge 5$ and $G=C_p\rti C_{p-1}$ with $C_{p-1}$ acting faithfully, then a trivial source module with non-rational species is constructed as follows (cf.\ \cite{Beneish}). Let $\chi$ denote the inflation of a faithful character of $C_{p-1}$. The trivial source $\z_p[G]$-modules are then the summands of
	\begin{align*} \one\up^G&= \bigoplus_{i=0}^{p-2} \one \up^G_{C_{p-1}}\ot \chi^i,\\
	\one \up ^G_{C_p}&= \bigoplus_{i=0}^{p-2} \chi^i.
	\end{align*}
	Then, 
	\[M:= \left(\bigoplus_{\substack{i=0\\i\neq 1 }}^{p-2} \one \up^G_{C_{p-1}}\ot \chi^i\right) \op \chi\]
	is defined over $\zp$, but as the only summand of vertex $C_p$ is $\chi$, the vertex decomposition is not defined over $\zp$, and the species of $M$ are non-rational, as $t_{(C_p,\tau)}(M)=\chi(\tau)$ is a primitive $(p-1)$\tss{st} root of unity.
	
	In particular, $A(\z_p[G],\perm) \subsetneq A(\zp[G],\triv)$ as the species of permutation modules are integers (Lemma \ref{fixedpts}).
\end{example}

\begin{thm}\label{basisphypo}
	A basis of $A(\z_p[G],\perm)$ is given by $\{\one \up^G_H\} _{ H \in \hyp_p(G) }$.
\end{thm}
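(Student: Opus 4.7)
The plan is to establish spanning and linear independence of the set $\{\one\up^G_H\}_{H \in \hyp_p(G)}$ separately, relying on Conlon's induction theorem for the former and the species machinery of Section \ref{tsmods} for the latter.

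For spanning, it suffices to show each $\one\up^G_K$ with arbitrary $K \le G$ lies in $\mathrm{span}_\q\{\one\up^G_H : H \in \hyp_p(G)\}$. I would apply Conlon's induction theorem (in the form preceding Lemma \ref{quotbasis}) to the group $K$ itself, obtaining
\[\one_K \,=\, \sum_{H' \in \hyp_p(K)} \alpha_{H'}\,\one\up^K_{H'} \;\in\; A(\z_p[K]),\]
for some $\alpha_{H'} \in \q$. Inducing to $G$ and using transitivity of induction yields $\one\up^G_K = \sum_{H'} \alpha_{H'}\,\one\up^G_{H'}$, and since $p$-hypo-elementariness is an intrinsic property of the group, each $H' \in \hyp_p(K)$ represents (up to conjugation in $G$) an element of $\hyp_p(G)$.

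For linear independence, suppose $\sum_{H \in \hyp_p(G)} c_H \one\up^G_H = 0$. For each $H_0 \in \hyp_p(G)$, write $H_0 = P_0 \rti C_0$ with $P_0$ its normal Sylow $p$-subgroup and choose $g_0 \in H_0$ whose image generates the cyclic quotient $H_0/P_0$, so that $\la P_0, g_0 \ra = H_0$. Applying the species $t_{(P_0, g_0)}$ and invoking Lemma \ref{fixedpts} gives
\[t_{(P_0, g_0)}\bigl(\one\up^G_H\bigr) \,=\, \#(G/H)^{H_0} \,=\, \#\{gH : g^{-1}H_0 g \subseteq H\}.\]
Ordering the conjugacy-class representatives in $\hyp_p(G)$ by increasing order, I claim the resulting matrix $M_{H_0,H} = \#(G/H)^{H_0}$ is upper triangular with nonzero diagonal. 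Indeed, $M_{H_0,H}$ vanishes unless some conjugate of $H_0$ is contained in $H$, which forces $|H_0| \le |H|$; moreover, when $|H_0| = |H|$, any such containment is an equality, so $M_{H_0,H} = 0$ unless $H \sim_G H_0$. The diagonal entry $M_{H_0, H_0} = |N_G(H_0)|/|H_0|$ is strictly positive. Hence the matrix is invertible and all $c_H$ vanish.

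The main obstacle is really just bookkeeping: Conlon's theorem supplies spanning, the species framework and Lemma \ref{fixedpts} supply the values we need, and the only genuine point to verify is the triangularity, which reduces to the elementary statement that a conjugate of $H_0$ cannot strictly fit inside a subgroup of size $\le |H_0|$.
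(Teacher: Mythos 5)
Your argument for linear independence is essentially identical to the paper's: apply species $t_{(P_0,g_0)}$ with $\langle P_0,g_0\rangle = H_0$, use Lemma \ref{fixedpts} to get a fixed-point count, and observe the resulting matrix is triangular after ordering by size. That part is fine (one small detail you should make explicit: choose $g_0$ to lie in a complement $C_0$ of $P_0$, so that $|g_0|$ is coprime to $p$ and $(P_0,g_0)$ really is a valid species pair).

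The spanning step is where you diverge from the paper, and as written it is circular. You invoke ``Conlon's induction theorem in the form preceding Lemma \ref{quotbasis},'' but that form (the definition of the Conlon relation, $\one_{\z_p,G} = \sum_{H\in\hyp_p(G)} \alpha_H \one\up^G_H$) is itself justified in the paper by an appeal to Theorem \ref{dimbrelp}\,{\it i)}, whose proof in turn defers to the very theorem you are proving (Theorem \ref{basisphypo}). So you are using the statement to prove the statement. If you instead cite the \emph{classical} Conlon induction theorem from an external source (e.g.\ Benson), your spanning argument becomes legitimate and is a perfectly reasonable alternative route. But the paper deliberately avoids needing that external spanning result: rather than proving spanning directly, it derives the matching upper bound on $\dim_\q A(\z_p[G],\perm)$ from Theorem \ref{conlon} together with a Galois-theoretic observation --- whenever $\langle P,g\rangle = \langle P,g^i\rangle$, the species $t_{(P,g)}$ and $t_{(P,g^i)}$ differ by an automorphism of $\co/\q$, so they agree on permutation modules since those have rational species (Lemma \ref{fixedpts}). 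Hence the number of genuinely distinct species on $A(\z_p[G],\perm)$ is at most the number of conjugacy classes of $p$-hypo-elementary subgroups, giving the dimension bound without ever needing to write $\one_G$ as a rational combination of induced trivials. You should either supply a proper external citation for classical Conlon induction, or adopt the paper's Galois argument to close the gap.
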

\begin{proof}
	We first check linear independence. By Theorem \ref{conlon}, we need only check linear independence after taking species. By Lemma \ref{fixedpts}, the species $t_{(P,g)}(\one\up^G_H)$ is zero whenever no conjugate of $H$ contains $\la P,g\ra$, whilst $t_{(P,g)}(\one\up^G_{\la P,g\ra })\neq 0$. After (non-uniquely) ordering the elements of $\hyp_p(G)$ by increasing size, linear independence is now clear.
	
	Now, let $i$ be such that $\la P, g\ra =\la P,g^i\ra$ and let $\sigma \in \Aut(\co/\q)$ raise the $|g|$\th roots of unity to the $i$\th power. Then, for any trivial source module $M$ over $\z_p$,
	\begin{align*}
t_{(P,g^i)}(M)&=\tr(g^i \mid \textrm{vertex $P$ summand of $M\down_{\la P,g^i\ra}$})\\
&=\tr(g^i \mid \textrm{vertex $P$ summand of $M\down_{\la P,g\ra}$})\\
&=\tr(g \mid \textrm{vertex $P$ summand of $M\down_{\la P,g\ra}$})^\sigma\\
&=t_{(P,g)}(M)^\sigma.
	\end{align*}
	But for permutation modules $M$, $t_{(P,g)}(M)$ is rational (Lemma \ref{fixedpts}), so $t_{(P,g)}(M)$ is constant on pairs $(P,g)$ generating the same $p$-hypo-elementary subgroup up to conjugacy. Therefore, $\dim _\q A(\z_p[G],\perm)\le \# \{\textrm{conjugacy classes of $p$-hypo-elementary groups}\}$.
\end{proof}
We used this in Theorem \ref{dimbrelp} to find a basis of the space of Brauer relations in characteristic $p$. Examining the proof of the theorem we find:
\begin{corol} \label{ratspecies}
	For any finite group $G$ and prime $p$, $A(\z_p[G],\perm) \subseteq A(\z_p[G],\triv)$ is precisely the subspace of elements whose species are all rational.
\end{corol}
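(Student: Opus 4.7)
The forward inclusion is immediate: by Lemma \ref{fixedpts}, $t_{(P,g)}(\one_K\up^G) = \#(G/K)^{\la P,g\ra} \in \z$ for any pair $(P,g)$ and any $K \le G$, so $\q$-linearity of species extends the rationality of all species to the whole subspace $A(\z_p[G],\perm)$.

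For the reverse inclusion, let $V$ denote the subspace of $A(\z_p[G],\triv)$ on which every species $t_{(P,g)}$ takes rational values. By Theorem \ref{basisphypo}, $\dim_\q A(\z_p[G],\perm) = |\hyp_p(G)|$. The plan is to show that $\dim_\q V \le |\hyp_p(G)|$; combined with the forward inclusion, this forces equality.

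By Conlon's theorem (Theorem \ref{conlon}), the product map $\prod t_{(P,g)} \colon A(\z_p[G],\triv) \to \prod_{(P,g)} \co$ is injective, so to bound $\dim_\q V$ it is enough to show that among the restrictions $t_{(P,g)}|_V$, two pairs yield the same $\q$-linear functional whenever the subgroups $\la P,g\ra$ and $\la P',g'\ra$ are $G$-conjugate. This leaves $|\hyp_p(G)|$ potentially independent functionals, giving the required dimension bound. Two reductions suffice. First, the Galois identity already established inside the proof of Theorem \ref{basisphypo}, namely
\[ t_{(P,g^i)}(M) = \sigma\bigl(t_{(P,g)}(M)\bigr) \]
for any trivial source $M$, with $\sigma$ the Galois automorphism raising $|g|$th roots of unity to the $i$th power, becomes $t_{(P,g^i)}|_V = t_{(P,g)}|_V$ upon restriction to $V$, since rational values are $\sigma$-fixed. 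Second, for two arbitrary pairs $(P,g)$ and $(P',g')$ with $\la P,g\ra$ and $\la P',g'\ra$ conjugate in $G$, I would conjugate in $G$ (which manifestly preserves the species) to assume both generate the same $p$-hypo-elementary subgroup $H$; then $P=P'$ is forced as the unique Sylow $p$-subgroup of $H$, and $\la g\ra, \la g'\ra$ are Hall $p'$-complements in $H$, hence $H$-conjugate by Schur--Zassenhaus. After this further conjugation $g' = g^i$ for some $i$ coprime to $|g|$, and the Galois identity applies.

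I do not foresee a genuine obstacle: every analytic ingredient is already developed (Lemma \ref{fixedpts}, the Galois identity in the proof of Theorem \ref{basisphypo}, and Theorem \ref{conlon}); the only new step is the Schur--Zassenhaus conjugacy argument synchronising the choice of complement, which is a standard group-theoretic manoeuvre.
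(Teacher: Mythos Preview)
Your proof is correct and follows essentially the same approach as the paper, which simply writes ``Examining the proof of the theorem we find'' and leaves the reader to extract the argument from the proof of Theorem~\ref{basisphypo}. Your explicit Schur--Zassenhaus step, reducing two pairs $(P,g)$, $(P',g')$ generating conjugate $p$-hypo-elementary subgroups to the Galois identity $t_{(P,g^i)} = \sigma\circ t_{(P,g)}$, is a useful elaboration of a point the paper leaves implicit (it only spells out the Galois step and relies on species being defined up to simultaneous conjugacy).
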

This also follows from work of Fan Yun \cite{FanYun}.
\begin{lemma}\label{reductioncondtophypo}
	Let $G$ be a finite group and $p$ a prime. If $A(\z_p[H],\perm)=A(\zp[H],\triv)$ for all $H\in \hyp_p(G)$, then $A(\z_p[G],\perm)=A(\zp[G],\triv)$.
\end{lemma}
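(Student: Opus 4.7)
The plan is to deduce the equality for $G$ from Corollary \ref{ratspecies}, which characterises $A(\z_p[G],\perm)$ inside $A(\z_p[G],\triv)$ as the subspace of elements whose species are all rational. Since every permutation module is trivial source and extension of scalars identifies $A(\zp[G],\perm)$ with $A(\z_p[G],\perm)$, the inclusion $A(\z_p[G],\perm) \subseteq A(\zp[G],\triv)$ is automatic. So the work is in the reverse inclusion.

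First I would let $M$ be any element of $A(\zp[G],\triv)$ and aim to show that every species $t_{(P,g)}(M \otimes \z_p)$ is rational; by Corollary \ref{ratspecies}, this will place $M\otimes \z_p$ inside $A(\z_p[G],\perm) = A(\zp[G],\perm)$, and since extension of scalars from $\zp$ to $\z_p$ is injective on representation rings, we would conclude $M \in A(\zp[G],\perm) \subseteq A(\z_p[G],\perm)$ as required.

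To see the rationality of $t_{(P,g)}(M\otimes \z_p)$, note that by construction this species factors through restriction to $H:=\langle P,g\rangle$, which is a $p$-hypo-elementary subgroup of $G$. Since restriction preserves trivial source, $M\!\downarrow_H \in A(\zp[H],\triv)$, and the hypothesis of the lemma gives $A(\zp[H],\triv) = A(\z_p[H],\perm)$. Thus, after extension of scalars, $M\!\downarrow_H \otimes \z_p$ is a rational linear combination of permutation $\z_p[H]$-modules $\one\!\up^H_K$, each of which has species $t_{(P,g)}(\one\!\up^H_K) = \#(H/K)^{\langle P,g\rangle} \in \z_{\ge 0}$ by Lemma \ref{fixedpts}. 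Consequently $t_{(P,g)}(M\!\downarrow_H \otimes \z_p) \in \q$, and hence $t_{(P,g)}(M\otimes \z_p) \in \q$.

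There is no serious obstacle: the argument is essentially a bookkeeping exercise combining the species-detection theorem (Theorem \ref{conlon}), the characterisation of $A(\z_p[G],\perm)$ via rational species (Corollary \ref{ratspecies}), the fact that species factor through restriction to the subgroup generated by their defining pair, and the identification $A(\z_p[G],\perm) = A(\zp[G],\perm)$. The only subtlety worth flagging is that we must be careful to apply the hypothesis at the level of $\zp$-lattices (so that restriction lands in $A(\zp[H],\triv)$ and the hypothesis is actually applicable), and then extend scalars before invoking Corollary \ref{ratspecies}.
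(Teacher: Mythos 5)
Your argument is correct and follows essentially the same route as the paper's own proof: restrict to the $p$-hypo-elementary subgroup $\langle P,g\rangle$, use the hypothesis to see that the restriction lands in the permutation ring, invoke Lemma \ref{fixedpts} to get rationality of species, and apply Corollary \ref{ratspecies}. You are a bit more explicit than the paper about the bookkeeping between $\zp$ and $\z_p$ (tensoring before invoking the corollary, then using injectivity of extension of scalars), which the paper leaves implicit.
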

\begin{proof}
	Suppose that $a\in A(\zp[G],\triv)$ is such that, upon restriction to every $p$-hypo-elementary subgroup $H$, $a\down_H$ is a permutation module. Then $a\down_H$ has rational species (Lemma \ref{fixedpts}). But then $a$ itself must have rational species as species are defined via restriction to the $p$-hypo-elementary subgroups. Applying Corollary \ref{ratspecies} we find $a \in A(\zp[G],\perm)$.
\end{proof}
As a result, a group $G$ satisfies the conditions of Theorem \ref{main} if its $p$-hypo-elementary subgroups do.
\begin{notat}
	Let $(P,g),(P',g')$ define two species and set $n=|g|$. We say $(P,g)\sim_p (P',g')$ if there exists an element $h\in G$ such that $(P')^h=P$ and $(g')^h= g^i$ for some $i \in (\z/n\z)^\ti $ with $i \equiv 1$ $(\textrm{mod } \gcd(n,p-1))$.
\end{notat}
\begin{lemma}\label{formulafortriv}
	For any finite group $G$ and prime $p$,
	\begin{enumerate}[i)]
		\item $\dim (A(\z_p[G],\triv))=\# \left(\{\textrm{species } (P,g) \} /\sim_p \right)$,
		\item $\dim (\im(A(\z_p[G],\triv)\to A(\q_p[G])))=\# \left(\{\textrm{species } (P,g) \mid \la P, g\ra \textrm{ is cyclic} \} /\sim_p \right)$.
	\end{enumerate}
\end{lemma}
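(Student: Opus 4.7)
The plan is to realize $A(\z_p[G],\triv)$ as the Galois-fixed subring of the analogous construction over a sufficiently large extension, where the classical form of Conlon's theorem provides a complete species-theoretic basis. Concretely, I would choose an unramified extension $K/\q_p$ whose ring of integers $\OO$ contains $\zeta_n$ for every order $n$ of a $p$-regular element of $G$, set $\Gamma = \gal(K/\q_p)$, and invoke the classical Conlon--Dress theorem over $\OO$ (cf.\ \cite[Cor.\ 5.5.5]{Benson95}). This realises $A(\OO[G],\triv) \ot \q$ as a $\q$-vector space of dimension exactly the number of $G$-conjugacy classes of pairs $(P,g)$, with species giving a basis of its linear dual.

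The main step, and the core technical obstacle, is to identify $A(\z_p[G],\triv) \ot \q$ with $(A(\OO[G],\triv) \ot \q)^\Gamma$ via extension of scalars. Injectivity follows because trivial source modules over $\z_p$ are uniquely determined by their reductions mod $p$, and extension of scalars commutes with these reductions $\Gamma$-equivariantly, as in the proof of Theorem \ref{conlon}. Surjectivity onto $\Gamma$-invariants is a form of Galois descent: any trivial source $\OO[G]$-lattice whose isomorphism class is $\Gamma$-stable has $\Gamma$-invariant reduction modulo the maximal ideal of $\OO$, hence descends to $\F_p$; since trivial source modules in characteristic $p$ lift uniquely to $\z_p$, this gives descent of the isomorphism class to $\z_p$.

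Once the identification is in place, I would compute the $\Gamma$-action on species following exactly the calculation in the proof of Theorem \ref{basisphypo}: $\sigma \cdot t_{(P,g)} = t_{(P,g^{i(\sigma)})}$ whenever $\sigma(\zeta_{|g|}) = \zeta_{|g|}^{i(\sigma)}$. Since $K/\q_p$ is unramified, the image of $\Gamma$ in $(\z/|g|\z)^\ti$ is generated by Frobenius $\zeta \mapsto \zeta^p$, which is precisely the subgroup captured by the equivalence $\sim_p$ in the statement. Counting $\Gamma$-orbits on pairs $(P,g)$ modulo $G$-conjugacy then gives \emph{(i)}.

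For \emph{(ii)}, the image of $A(\z_p[G],\triv) \to A(\q_p[G])$ is spanned by the $\q_p$-characters of trivial source modules, and by Example \ref{trasspecies} the character value $\tr(x \mid M)$ at $x \in G$ equals $t_{(P,g)}(M)$, where $P$ is the Sylow $p$-subgroup of $\langle x\rangle$ and $g$ is the $p$-regular part of $x$. These are precisely the species whose associated subgroup $\la P,g\ra = \langle x\rangle$ is cyclic, and distinct conjugacy classes of $x$ correspond to distinct $G$-conjugacy classes of $(P,g)$; applying the same $\Gamma$-descent to this subfamily yields \emph{(ii)}.
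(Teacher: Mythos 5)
Your route — descend the classical Conlon--Dress classification from a large unramified $\OO$ to $\z_p$ and count Galois orbits — is genuinely different from the paper's, which works entirely over $\z_p$: it gets the upper bound on $\dim A(\z_p[G],\triv)$ by arguing that the species $t_{(P,g)}$ are $\sim_p$-invariant as functions on $A(\z_p[G],\triv)$ (using Theorem~\ref{conlon}), and the matching lower bound from the Green correspondence, counting one trivial source indecomposable of vertex $P$ for each projective indecomposable $\z_{(p)}[N_G(P)/P]$-lattice. Your approach trades the lower-bound count for a descent step.

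There is, however, a gap at the decisive orbit identification. You assert that the image of Frobenius, $\langle p\rangle\subseteq(\z/n\z)^\ti$ for $n=|g|$, ``is precisely the subgroup captured by $\sim_p$,'' i.e.\ equals $\{\,i : i\equiv 1 \pmod{\gcd(n,p-1)}\,\}$. Only the inclusion $\langle p\rangle\subseteq\{i\equiv 1\pmod{\gcd(n,p-1)}\}$ holds (because $p\equiv1\pmod{p-1}$), and it is strict in general: for $n=8$, $p=3$ one has $\langle 3\rangle=\{1,3\}$ inside $(\z/8\z)^\ti$ while $i\equiv1\pmod{2}$ allows all of $\{1,3,5,7\}$. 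So the quantity your descent actually computes is the number of $p$-cyclotomic classes of pairs $(P,g)$, which is in general strictly larger than the number of $\sim_p$-classes appearing in the lemma, and the two counts must be reconciled before the proof closes. (You may wish to note, for your own understanding, that the paper's proof appears to make the analogous leap — concluding $\sim_p$-invariance from a Galois argument that only yields $\langle p\rangle$-invariance — and that the example $G=C_8$, $p=3$, where $\dim A(\z_3[C_8],\triv)=5$ but there are only $4$ $\sim_3$-classes, is worth thinking through carefully.)

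Separately, the surjectivity half of your descent $A(\z_p[G],\triv)\ot\q = (A(\OO[G],\triv)\ot\q)^\Gamma$ is asserted rather than proved: that a trivial source $\OO[G]$-lattice with $\Gamma$-stable isomorphism class descends to $\z_p$ requires effective Galois descent for modules (e.g.\ Hilbert~90 for $\GL_n$ over finite residue fields, combined with unique lifting of trivial source modules), not merely that the reduction mod $\mathfrak m$ has $\Gamma$-invariant class. The paper avoids this issue entirely by staying over $\z_p$. For part \emph{(ii)}, your identification of cyclic-species with character traces via Example~\ref{trasspecies} does coincide with the paper's argument.
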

\begin{proof}
	For any finite Galois extension $K/\q_p$, the action of $\gal(K/\q_p)$ on lattices respects decompositions into vertices. As a result, $t_{(P,g)}(-)=t_{(P,g^i)}(-)$ as functions on $A(\z_p[G],\triv)$ for any integer $i$ such that $(-)^i$ is an automorphism of $\la g\ra$ which acts trivially on the subgroup of order $m=\gcd(n,p-1)$, i.e.\ whenever $(P,g)\sim_p (P,g^i)$. Together with Theorem \ref{conlon}, this demonstrates the upper bound on $\dim A(\z_p[G],\triv)$. For the lower bound, use that the Green correspondence provides a distinct indecomposable trivial source module of vertex $P$ for every projective indecomposable $\zp[N_G(P)/P]$-lattice (see e.g.\ \cite[Thm.\ 3.12.2]{Benson95}), of which there are $\#(\{\textrm{species }(Q,h)\mid Q=P   \}/\sim_p)$.
	
	We now show {\it ii)}. The dimension of $A(\q_p[G])$ is equal to the number of distinct ring homomorphisms $\tr(g\mid -)\colon A(\q_p[G])\to\co$. The dimension of $\im (A(\z_p[G],\triv)\to A(\q_p[G]))$ is then the number of species up to $\sim_p$ which are of the form $\tr(g \mid -)$. By Example \ref{trasspecies}, this is equal to $\# \left(\{\textrm{species } (P,g) \mid \la P, g\ra \textrm{ is cyclic} \} /\sim_p \right)$.
\end{proof}
\begin{lemma}\label{condeqcond}
	The following are equivalent
	\begin{enumerate}[i)]
		\item $A(\z_p[G],\perm)=A(\zp[G],\triv)$,
		\item the species of all trivial source $\zp[G]$-lattices are rational,
		\item there is an equality
\begin{align*}\dim(A(\z_p[G],\perm)) =  &\dim(A({\z_p}[G],\triv)) \\ &-\dim (\im (A(\z_p[G],\triv)\to A(\q_p[G]))\\&  + \dim (A(\q[G])) ,\end{align*}
		\item there is an equality \begin{align*}\#\left(\{\textrm{$p$-hypo-elementary subgroups}\}/\sim \right) =  &\ \#\left(\{\textrm{species }(P,g) \}/\sim_p \right)\\ &-\# \left(\{\textrm{species } (P,g) \mid \la P, g\ra \textrm{ is cyclic} \} /\sim_p \right)\\&  + \#\left(\{\textrm{cyclic subgroups} \}/\sim\right) ,\end{align*}
		where $\sim$ denotes up to conjugacy.
			\end{enumerate}
\end{lemma}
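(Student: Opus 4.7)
The plan is to establish the four equivalences in a short cycle: $(i)\Leftrightarrow (ii)$ directly from Corollary \ref{ratspecies}, $(iii)\Leftrightarrow (iv)$ by substituting known dimension formulas, and then $(i)\Leftrightarrow (iii)$ via a short-exact-sequence argument hinging on a single identification inside $A(\z_p[G],\triv)$.

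For $(i)\Leftrightarrow (ii)$ the idea is to intersect: Corollary \ref{ratspecies} identifies $A(\z_p[G],\perm)$ with the subspace of $A(\z_p[G],\triv)$ of elements whose species are all rational. Since we always have $A(\z_p[G],\perm)\subseteq A(\zp[G],\triv)$, intersecting with $A(\zp[G],\triv)$ yields $A(\z_p[G],\perm)=\{a\in A(\zp[G],\triv):\text{all species of }a\text{ are rational}\}$. Equality with $A(\zp[G],\triv)$ is then exactly the condition that every $a\in A(\zp[G],\triv)$ has rational species, namely $(ii)$. For $(iii)\Leftrightarrow (iv)$ I would simply plug in $\dim A(\z_p[G],\perm)=|\hyp_p(G)|$ from Theorem \ref{basisphypo}, the two counts of species classes from Lemma \ref{formulafortriv}, and $\dim A(\q[G])=|\cyc(G)|$ from Artin's induction theorem \ref{ait}.

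The heart of the argument is $(i)\Leftrightarrow (iii)$, and the main obstacle is to establish the identification
\[
A(\zp[G],\triv)\;=\;\phi^{-1}(A(\q[G]))\qquad\text{inside }A(\z_p[G],\triv),
\]
where $\phi\colon A(\z_p[G],\triv)\to A(\q_p[G])$ is extension of scalars. One inclusion is immediate, since $\zp[G]$-lattices have rational character. For the reverse inclusion, given $a=[M]-[N]$ with $M,N$ trivial-source $\z_p[G]$-lattices and $\phi(a)\in A(\q[G])$, I would embed $N$ as a summand of a permutation $\z_p[G]$-module $T$ with complement $N'$, rewrite $a=[M\oplus N']-[T]$, and apply \cite[Prop.\ 5.7]{ReinerSurvey}: the trivial-source lattice $M\oplus N'$ now has rational character (since both $\phi(a)$ and $[T]$ do), hence descends to a trivial-source $\zp[G]$-module, as does $T$. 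This is essentially the lifting trick already used in the proof of Theorem \ref{main}.

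Given this identification, the argument finishes quickly. Since $A(\z_p[G],\perm)\subseteq A(\zp[G],\triv)$ and Artin's induction theorem shows $\phi(A(\z_p[G],\perm))=A(\q[G])$, the restriction of $\phi$ to $A(\zp[G],\triv)$ is surjective onto $A(\q[G])$, and $\ker(\phi|_{A(\zp[G],\triv)})=\ker\phi$. Comparing the two short exact sequences
\[
0\to \ker\phi\to A(\z_p[G],\triv)\to \im\phi\to 0,\qquad 0\to \ker\phi\to A(\zp[G],\triv)\to A(\q[G])\to 0
\]
gives
\[
\dim A(\zp[G],\triv)\;=\;\dim A(\z_p[G],\triv)-\dim\im\phi+\dim A(\q[G]).
\]
Therefore $(iii)$ is equivalent to $\dim A(\z_p[G],\perm)=\dim A(\zp[G],\triv)$, which under the inclusion $A(\z_p[G],\perm)\subseteq A(\zp[G],\triv)$ is $(i)$.
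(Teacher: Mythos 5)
Your proposal is correct and follows essentially the same route as the paper: Corollary \ref{ratspecies} for $i)\Leftrightarrow ii)$, the dimension counts from Lemma \ref{formulafortriv}, Theorem \ref{basisphypo}, and Artin's induction theorem for $iii)\Leftrightarrow iv)$, and for $i)\Leftrightarrow iii)$ the same two ingredients the paper cites, namely \cite[Prop.\ 5.7]{ReinerSurvey} (trivial source over $\z_p$ descends to $\zp$ iff its character is rational) and surjectivity of $A(\zp[G],\triv)\to A(\q[G])$ from Artin's induction theorem. You have simply made the dimension bookkeeping behind $i)\Leftrightarrow iii)$ explicit, via the identification $A(\zp[G],\triv)=\phi^{-1}(A(\q[G]))$ and comparison of the two short exact sequences; the paper leaves this implicit.
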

\begin{proof}
	The equivalence {\it i)} $\iff$ {\it ii)} is Corollary \ref{ratspecies}. For {\it i)} $\iff$ {\it iii)}, use that a trivial source $\z_p[G]$-module is defined over $\zp$ if and only if it has rational character \cite[Prop.\ 5.7]{ReinerSurvey}, together with the fact that $A(\zp[G],\triv)\to A(\q[G])$ is surjective by Artin's induction theorem. For {\it iii)} $\iff$ {\it iv)}, combine Lemma \ref{formulafortriv}, Theorem \ref{basisphypo} and Artin's induction theorem. 
\end{proof}

 We conclude by giving examples of groups satisfying the condition $A(\z_p[G],\perm)=$ \linebreak $A(\zp[G],\triv)$:
\begin{example}\label{abelian}
	If $G$ is abelian with cyclic Sylow $p$-subgroups, then all $p$-hypo-elementary subgroups are cyclic and so, by Lemma \ref{condeqcond} {\it iv)}, $A(\z_p[G],\perm) =A(\zp[G],\triv)$ (note, there may be $\zp[G]$-lattices $M$ for which $(M\ot \z_p)_\triv$ does not lie in $A(\z_p[G],\perm)$, see the example of $C_3\ti C_4$ in \cite[Ch.\ 2]{thesis}). When applying Theorem \ref{main} for such $G$, the fact that there are no non-cyclic $p$-hypo-elementary subgroups makes the data of {\it ii)} empty.
\end{example}
\begin{example}\label{dihedral}
	Let $p$ be odd and $G=D_{2q}$ be the dihedral group of order $2q$ for any $q\ge 1$. Recall, that we need only check that the condition for all $p$-hypo-elementary subgroups. The only possible $p$-hypo-elementary subgroups of $G$ are either cyclic, in which case they are covered by the previous example, or of the form $D_{2p^r}$ for some $r\ge 1$. In that case, species up to $\sim_p$ are in bijection with subgroups and so Lemma \ref{condeqcond} {\it iv)} holds. 
\end{example}
\begin{example}
	If $(|G|,p-1)=1$, then all species of trivial source $\z_p[G]$-lattices are rational and so by Corollary \ref{ratspecies}, $A(\z_p[G],\perm)=A(\z_p[G],\triv)$. In particular, when $p=2$ all groups with cyclic Sylow $2$-subgroups satisfy the conditions of Theorem \ref{main}.
\end{example}
\section{Examples}\label{examples}
\subsection{$D_{2p}$}
	Let $G=D_{2p}=C_p\rti C_2$ be the dihedral group of order $2p$ for $p$ an odd prime. Then, $G$ satisfies the conditions of Theorem \ref{main} for all primes $\ell$, by Example \ref{dihedral}. Since all complex irreducible representations of $D_{2p}$ are defined over $\re$, all $\z_\ell[G]$-lattices are rationally self-dual for any $\ell$.

In this section, we explicate how Theorem \ref{main} distinguishes $\z_\ell[D_{2p}]$-lattices. Broadly, there are three different cases, when $\ell=2,p$ or when $\ell$ is coprime to the order of the group. In the latter case character theory applies and we shall for simplicity additionally assume that $\ell$ is chosen so that all $\q_\ell[G]$-representations are defined over $\q[G]$. For example, this is the case if $\ell$ is a primitive element modulo $p$. 

\label{D2pfullex}

It is only possible to go into such detail for groups of the form $D_{2p}$ as they are one of the few families of groups for which the isomorphism classes of all indecomposable $\z_\ell[G]$-lattices have been classified for all $\ell$ dividing $|G|$. 

By assumption all $\q_\ell[G]$-representations are defined over $\q$, so by \cite[Prop.\ 5.7]{ReinerSurvey} all $\z_\ell[G]$-lattices are defined over $\z_{(\ell)}[G]$. In Example \ref{dihedral}, we checked that $A(\z_\ell[G],\perm)=A(\z_{(\ell)}[G],\triv)$ for all primes $\ell$. So, as in Example \ref{tsD2p}, a basis of $A(\z_{\ell}[G],\perm)=A(\z_{(\ell)}[G],\triv)$ is given by
\[ S=\begin{cases}\one_{\{1\}}\up^G,\one_{C_2}\up^G,\one_{C_p}\up^G & \ell \neq p\\ \one_{\{1\}}\up^G,\one_{C_2}\up^G,\one_{C_p}\up^G, \one_G & \ell = p
\end{cases},\]	
and this is also a basis of $A(\z_\ell[G],\triv)$. When $\ell \neq 2$ or $p$, all $\z_{\ell}[G]$-lattices are projective so $S$ forms a basis of $A(\z_{\ell}[G])$.

	We can exhaust the non-trivial source modules via Yakovlev's theorem (Thm.\ \ref{yak}). When $\ell=2$, as $N_{D_{2p}}(C_2)=C_2$, the Yakovlev diagram for a module $M$ simply consists of $H^1(C_2,M)$ as an abelian group. So any $\z_{2}[G]$-lattice $M$ for which $H^1(C_2,M)\cong\z/2\z$ will extend $S$ to a basis of $A(\z_{2}[G])$. The sign representation $\eps$, that is the non-trivial one dimensional irreducible lifted from $\z_{2}[D_{2p}/C_p]$, is one such module.
	
	When $\ell=p$, the Yakovlev diagram of a $\z_p[G]$-lattice $M$ consists of $H^1(C_p,M)$ as a $\F_p[D_{2p}/C_p]$-module. Since $\char (\F_p)\neq 2$, there are two irreducible $\F_p[D_{2p}/C_p]$-modules, both one dimensional, one with trivial action and one without. So any two lattices whose cohomology exhibits these modules will extend $S$ to a basis of $A(\z_{p}[G])$. If $\rho$ denotes the $(p-1)$-dimensional irreducible $\q_p[G]$-representation, then there are two non-isomorphic $\z_{p}[G]$-sublattices $A,A'$ contained in $\rho$, with $H^1(C_p,A),H^1(C_p,A') \cong \z/p\z$ as abelian groups, but the former having non-trivial $D_{2p}/C_p$ action and $D_{2p}/C_p$ acting trivially on the latter. These modules are explicitly constructed in \cite{Lee64}. 
	
	In conclusion, 
	\[ \dim_\q(A(\z_{\ell}[G]))=  \begin{cases} 3 & \ell \neq 2,p\\
	4& \ell = 2\\
	6 & \ell = p
	\end{cases}.\]	 
	with a basis $S'$ given by
	\[ S'=\begin{cases}\one_{\{1\}}\up^G,\one_{C_2}\up^G,\one_{C_p}\up^G & \ell \neq 2,p\\
	\one_{\{1\}}\up^G,\one_{C_2}\up^G,\one_{C_p}\up^G,\eps & \ell = 2\\
	\one_{\{1\}}\up^G,\one_{C_2}\up^G,\one_{C_p}\up^G, \one_G, A,A' & \ell = p
	\end{cases}.\]	
Denote the extension of scalars map $A(\z_{\ell}[G])\to A(\q_\ell[G])$ by $a$, and the map $A(\z_{\ell}[G])\to \bigoplus_{H \in \nchyp_\ell(G)} \q$ which assigns a lattice $M$ the vector $(v_p(C_{\theta_H}(M)))_{H\in \nchyp_\ell(G)}$ by $b$. Then, Theorem \ref{main} states that $a \oplus b\oplus \textrm{Yak}$ is injective (and so an isomorphism). The matrix representing $a \oplus b\oplus \textrm{Yak}$ is given by:  
								\begin{figure}[H]
		\begin{tikzpicture}[
		every left delimiter/.style={xshift=.75em},
		every right delimiter/.style={xshift=-.75em},
		]
		\matrix [ matrix of math nodes,left delimiter={( },right delimiter={ )},row sep=0.1cm,column sep=0.1cm] (U) { 
	1 &1 &1\\
 1 & \ \ \  \  0 \ \ \ \ &1\\
2 & 1&0\\
		};

		\node[white] at (-5.2,0) {{.}};

		\node[left=10pt  of U-1-1]  {$\one$};
		\node[left=12pt  of U-2-1]  {$\eps$};  		
				\node[left=12pt  of U-3-1]  {$\rho$}; 
		
		\node[above=.5pt  of U-1-1]  {$\one \up^G_{\{1\}}$}; 	
		\node[above=1pt  of U-1-2]  {$\one \up^G_{C_2}$}; 
		\node[above=0pt  of U-1-3]  {$\one \up^G_{C_p}$}; 
		
		\node at (4.2,0) {if $\ell\neq 2 ,p$ ,};
		\end{tikzpicture}
	\end{figure}\noindent  
							\begin{figure}[H]
		\begin{tikzpicture}[
		every left delimiter/.style={xshift=.75em},
		every right delimiter/.style={xshift=-.75em},
		]
		\matrix [ matrix of math nodes,left delimiter={( },right delimiter={ )},row sep=0.1cm,column sep=0.4cm] (U) { 
1 &   1   & 1    & 0\\
1 & 0&1&1\\
2 & 1&0&0\\
 0 &0 &0 &1\\
		};

		\node[white] at (-5,0) {{.}};
		
		\draw (.85,-1.1) -- (.85,1.1);
		\draw (1.4,-.55) -- (-1.4,-0.55);

\node[left=10pt  of U-1-1]  {$\one$};
\node[left=12pt  of U-2-1]  {$\eps$};  		
\node[left=12pt  of U-3-1]  {$\rho$}; 
		\node[left=12pt  of U-4-1]  {$\text{\rm Yak}(-)$};

		\node[above=0pt  of U-1-1]  {$\one \up^G_{\{1\}}$}; 	
		\node[above=0pt  of U-1-2]  {$\one \up^G_{C_2}$}; 
		\node[above=0pt  of U-1-3]  {$\one \up^G_{C_p}$}; 
		\node[above=3pt  of U-1-4]  {$\, \eps \, $}; 
		
		\node at (4,0) {if $\ell=2$,};
		\end{tikzpicture}
	\end{figure}\noindent 
						\begin{figure}[H]
		\begin{tikzpicture}[
		every left delimiter/.style={xshift=.75em},
		every right delimiter/.style={xshift=-.75em},
		]
		\matrix [ matrix of math nodes,left delimiter={( },right delimiter={ )},row sep=0.1cm,column sep=0.4cm] (U) { 
1 &1 &1& 1& 0&0\\
 1 & 0&1&0&0&0\\
2 & 1&0&0&1&1\\
0 &0 &0 &\! -1/2 \!&\! 1/2\!&\!-1/2\\
0&0&0&0&1&0\\
0&0&0&0&0&1\\
		};

		\node at (1,-0.5) {};
		
		\draw (-.55,-1.7) -- (-.55,1.7);
		\draw (.6,-1.7) -- (0.6,1.7);
		\draw (-2.85,0.1) -- (2.85,0.1);
		\draw (-2.85,-0.55) -- (2.85,-0.55);

\node[left=10pt  of U-1-1]  {$\one$};
\node[left=12pt  of U-2-1]  {$\eps$};  		
\node[left=12pt  of U-3-1]  {$\rho$}; 
		\node[left=10pt  of U-4-1]  {$v_p{(C_{\theta_G}(-))} $}; 
		\node at (-4,-1.3)  {$\text{\rm Yak}(-)$};

		\node[above=0pt  of U-1-1]  {$\one \up^G_{\{1\}}$}; 	
		\node[above=.5pt  of U-1-2]  {$\one \up^G_{C_2}$}; 
		\node[above=0pt  of U-1-3]  {$\one \up^G_{C_p}$}; 
		\node[above=3pt  of U-1-4]  {$\one_G$}; 
		\node[above=3pt  of U-1-5]  {$A$}; 
		\node[above=3pt  of U-1-6]  {$A'$}; 
		
		\node at (4,0) {if $\ell=p$.};
		\end{tikzpicture}
	\end{figure}\noindent
	Here, we take the basis $\one, \eps , \rho$ of $\q_\ell[G]$, where $\one ,\eps$ are the trivial and non-trivial one dimensional irreducibles and $\rho$ the $(p-1)$ dimensional irreducible. When $\ell=2$, the basis of $\mathcal{C}$ is taken to be $\z/2\z$, and when $\ell=p$, the basis is given by $\z/p\z$ with both its non-trivial and trivial $C_2\cong D_{2p}/C_p$-actions respectively. The calculations of $v_p(C_{\theta_G}(A)),v_p(C_{\theta}(A'))$ can be found in \cite[Thm.\ 4.4]{BartelDih}.

\begin{remark}\label{67}
	For $p\le 67$, a $\z[D_{2p}]$-lattice is determined by its localisation at the primes $2,p$ (see \cite[Ex.\ 6.3]{BartelDih}). So, by applying Theorem \ref{main} at both primes we obtain a finite list of data which specifies the isomorphism class of an arbitrary $\z[D_{2p}]$-lattice.
\end{remark}
\begin{remark}
	The above matrices can be seen to be block upper triangular. This was touched on in the proof of Lemma \ref{tsdet} and is a general phenomenon.
\end{remark}
\subsection{Groups with degenerate permutation pairing}
\label{counterexamples}
\begin{example}\label{C3C3S3}
	Let $G=C_3\ti C_3 \ti S_3$, a $3$-hypo-elementary group. Up to conjugacy $G$ has 17 subgroups, but the permutation pairing of Construction \ref{permpairingconstr} is degenerate and has rank 16.
\end{example}
In this section, we define a canonical Brauer relation $\theta_{\Sigma,G}$ which is non-zero for any non-cyclic group $G$. When $G=C_3 \ti C_3 \ti S_3$, then $\theta_{\Sigma,G}$ generates the kernel of the permutation pairing.
\begin{notat}\begin{itemize}
		\item 	Let $G$ be any finite group and $\Sigma$ denote the set of all subgroups of $G$, which is partially ordered with respect to containment. Let $\mu_\Sigma \colon \Sigma \to \z$ denote the M\"obius function on $\Sigma$, i.e.\ the unique function for which
		\[   \mu_\Sigma(G)  =  1  \]
		and
		\[ \sum_{\substack{H'\ge H}} \mu_\Sigma(H')=0    \]
		for all $H\neq G$.
		\item  Set 
		\[\theta_{\Sigma,G}= \sum_{H\in \Sigma} \frac{\mu_\Sigma(H)}{|G: H|}[H]  \in B(G). \]
		\item For an element $\theta\in B(G)$ and $K\le G$, let $\theta^K$ denote the number of fixed points of $\theta$ under $K$, i.e.\ if $\theta = \sum_H \alpha_H[H]$ then $\theta^K=\sum \alpha_H \#([H]^K)$.
	\end{itemize}
\end{notat}

\begin{lemma} \label{mobiusfixedpts}
	For any $K\le G$, $(\theta_{\Sigma,G})^K=\sum_{ H \ge K} \mu_\Sigma(H)$.
\end{lemma}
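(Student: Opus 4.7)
The plan is to compute $(\theta_{\Sigma,G})^K$ directly from the definitions and then reorganise the resulting double sum using the conjugation-invariance of $\mu_\Sigma$.

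First I would recall the standard formula for the number of $K$-fixed points on a transitive $G$-set: for $H \le G$,
\[
\#([G/H]^K) \;=\; \#\{gH \in G/H : g^{-1}Kg \subseteq H\} \;=\; \frac{1}{|H|} \cdot |\{g \in G : g^{-1}Kg \le H\}|.
\]
Substituting this into the definition of $\theta_{\Sigma,G}$ gives
\[
(\theta_{\Sigma,G})^K \;=\; \sum_{H\in \Sigma} \frac{\mu_\Sigma(H)}{|G:H|} \cdot \frac{|\{g\in G : g^{-1}Kg \le H\}|}{|H|} \;=\; \frac{1}{|G|}\sum_{H \in \Sigma} \mu_\Sigma(H)\, |\{g \in G : g^{-1}Kg \le H\}|.
\]
Next I would swap the order of summation, grouping by $g$ rather than by $H$, to obtain
\[
(\theta_{\Sigma,G})^K \;=\; \frac{1}{|G|}\sum_{g \in G}\ \sum_{\substack{H\in\Sigma\\ H\ge g^{-1}Kg}} \mu_\Sigma(H).
\]

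The key observation is then that conjugation by $g$ is a poset automorphism of $\Sigma$, so it restricts to an order isomorphism between the upper intervals $[K,G]$ and $[g^{-1}Kg,G]$. Since $\mu_\Sigma$ is intrinsic to the poset structure, this isomorphism preserves $\mu_\Sigma$, and hence
\[
\sum_{H \ge g^{-1}Kg} \mu_\Sigma(H) \;=\; \sum_{H \ge K} \mu_\Sigma(H)
\]
for every $g \in G$. Substituting this back, the inner sum no longer depends on $g$, and summing over the $|G|$ elements of $G$ cancels the factor $1/|G|$, giving exactly $(\theta_{\Sigma,G})^K = \sum_{H \ge K} \mu_\Sigma(H)$.

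There is essentially no obstacle here: the proof is just an application of Burnside-style fixed point counting together with the conjugation invariance of $\mu_\Sigma$. The only point one must be careful about is that $\Sigma$ is the poset of \emph{all} subgroups (not conjugacy classes), so that conjugation genuinely acts on $\Sigma$ as a poset automorphism and hence preserves $\mu_\Sigma$.
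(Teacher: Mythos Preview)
Your proof is correct and uses essentially the same ingredients as the paper's: the fixed-point formula for $[G/H]^K$ together with the conjugation-invariance of $\mu_\Sigma$. The only cosmetic difference is the way the double sum is reorganised: the paper first collapses the sum over all $H\in\Sigma$ to a sum over conjugacy classes and then counts how many conjugates of $H$ contain $K$, whereas you keep the sum over all $H$, swap the order of summation to sum over $g\in G$ first, and then invoke conjugation-invariance of $\mu_\Sigma$ to make the inner sum independent of $g$. Both arrive at the same conclusion by the same mechanism, and your version is arguably a touch cleaner.
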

\begin{proof}
	Since both $\#[H]^K$ and $\mu_\Sigma(H)$ are constant under replacing $H$ with a conjugate, we have
	\begin{align*}
	\sum_{H \le G}  \frac{\mu_\Sigma(H)}{|G : H|}\#([H]^K)&= \sum_{H\le_GG} \frac{|H| \mu_\Sigma(H)}{|N_G(H)|}\#([H]^K)\\
	&=\sum_{H\le_GG} \frac{|H| \mu_\Sigma(H)}{|N_G(H)|} |\{g \in G/H \mid K^g\le H  \}|\\
	&=\sum_{H\le_GG} \frac{|H| \mu_\Sigma(H)}{|N_G(H)|} |\{g \in G/H \mid K\le H^g  \}|\\
	&=\sum_{H\le_GG} \frac{|H| \mu_\Sigma(H)}{|N_G(H)|}\cdot \frac{|N_G(H)|}{|H|} |\{g \in G/N_G(H) \mid K\le H^g  \}|\\
	&= \sum_{ H \ge K } \mu_\Sigma(H).
	\end{align*}
\end{proof}
\begin{corol}\label{mobius result}
	For any finite group $G$,
	\begin{enumerate}[i)]
		\item $\theta_{\Sigma,G}$ is a Brauer relation in characteristic zero if and only if $G$ is non-cyclic,
		\item $\theta_{\Sigma,G}$ is a Brauer relation in characteristic $p$ if and only if $G$ is non-$p$-hypo-elementary,
		\item $\theta_{\Sigma,G}\down_H$ is zero for all proper subgroups $H$.
	\end{enumerate}
\end{corol}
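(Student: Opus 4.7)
The proof is driven by a single combinatorial identity. Combining Lemma \ref{mobiusfixedpts} with the defining recursion of $\mu_\Sigma$ immediately gives
\[
(\theta_{\Sigma,G})^K \;=\; \sum_{H \ge K} \mu_\Sigma(H) \;=\; \begin{cases} 1 & K = G,\\ 0 & K \neq G.\end{cases}
\]
In other words, $\theta_{\Sigma,G}$ is an element of $B(G)$ whose mark (fixed-point) vector is supported only at $G$ itself. All three parts will follow very quickly from this.

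For part \textit{iii)}, recall that the mark homomorphism $\theta \mapsto (\theta^K)_{K \le_H H}$ is injective on $B(H)$ (Burnside's theorem). Taking fixed points commutes with restriction, so for any $K \le H$ with $H \lneq G$ we have $K \ne G$, hence $(\theta_{\Sigma,G}\down_H)^K = (\theta_{\Sigma,G})^K = 0$. All marks vanish, so $\theta_{\Sigma,G}\down_H = 0$ in $B(H)$.

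For parts \textit{i)} and \textit{ii)}, the plan is to invoke the appropriate fixed-point characterisations of Brauer relations and read off the answer from the boxed identity. For \textit{i)}, a standard character-theoretic argument — using that $\chi_{\one\up^G_H}(g) = [H]^{\la g\ra}$ and that rational representations are determined by their characters — shows that $\theta \in B(G)$ is a characteristic-zero Brauer relation if and only if $\theta^C = 0$ for every cyclic $C \le G$. Applied to $\theta_{\Sigma,G}$, this fails exactly when $G$ itself is cyclic. For \textit{ii)}, the parallel criterion reads: $\theta$ is a characteristic-$p$ Brauer relation if and only if $\theta^K = 0$ for every $p$-hypo-elementary $K \le G$. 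This follows by combining Lemma \ref{fixedpts} (the species $t_{(P,g)}$ evaluates on $\one\up^G_K$ to the fixed-point count $\#(G/K)^{\la P,g\ra}$) with Theorem \ref{basisphypo} (the $\one\up^G_H$ for $H \in \hyp_p(G)$ form a basis of $A(\F_p[G],\perm)$, so the functionals $t_H$ indexed by $p$-hypo-elementary $H$ jointly separate this ring). Applying the criterion to $\theta_{\Sigma,G}$, the boxed identity shows it fails precisely when $G$ itself is $p$-hypo-elementary, giving \textit{ii)}.

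The only non-trivial ingredient is the characteristic-$p$ fixed-point criterion, which depends on the species-theoretic machinery from Section~\ref{arblat}; everything else is a direct consequence of Möbius inversion.
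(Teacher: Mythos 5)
Your proposal is correct and follows essentially the same route as the paper: all three parts are derived from the single observation that Lemma \ref{mobiusfixedpts} together with the defining recursion of $\mu_\Sigma$ forces $(\theta_{\Sigma,G})^K$ to vanish for all proper $K$ and equal $1$ at $K=G$, after which one applies the mark/fixed-point characterisations of Brauer relations (cyclic subgroups for characteristic zero, $p$-hypo-elementary subgroups via species and Lemma \ref{fixedpts} for characteristic $p$) and injectivity of marks for part \textit{iii)}. Your justification of the characteristic-$p$ criterion via Theorem \ref{basisphypo} and Lemma \ref{fixedpts} is a slightly more spelled-out version of the paper's parenthetical ``repeat the proof of Benson 5.6.1 using species,'' but the underlying argument is identical.
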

\begin{proof}
	We first check {\it i)}. An element of $B(G)$ is a relation in characteristic zero if and only if the number of its fixed points under all cyclic subgroups is zero (see for example the proof of \cite[Thm.\ 5.6.1]{Benson95}). By the lemma, $\theta_{\Sigma,G}$ is a relation in characteristic zero if and only if 
	\[  \sum_{ H \ge C }\mu_\Sigma(H)=0\]
	for all cyclic subgroups $C$. By the definition of $\mu_{\Sigma}$, this is true if and only if $G$ is not itself cyclic.
	
	The argument for {\it ii)} is identical instead using that elements of $B(G)$ are relations in characteristic $p$ if and only if the number of fixed points under all $p$-hypo-elementary subgroups is zero (repeat the proof of \cite[Thm.\ 5.6.1]{Benson95} but using species and Lemma \ref{fixedpts}).
	
	For {\it iii)}, simply note that an element of $B(H)$ is zero if and only if its fixed points under all subgroups is zero (proven analogously to the previous cases). But $(\theta_{\Sigma,G}\down_H)^K=(\theta_{\Sigma,G})^K$, which by the lemma vanishes for all proper subgroups $K<G$.
\end{proof}
\begin{remark}
	By Lemma \ref{regcontoolkit}\,{\it iv)}, $\theta_{\Sigma,G}$ automatically vanishes on all permutation modules other than possibly the trivial representation. In Example \ref{C3C3S3}, $C_3\ti C_3\ti S_3$ is a group for which in addition $\theta_{\Sigma,G}(\one_G)=1$.
\end{remark}
\begin{example}
	Let $G=(C_p\ti C_p)\rti C_q$ with $p,q$ odd primes and $p=2q+1$ and where $C_q$ acts diagonally on $C_p\ti C_p$. Write $\alpha_H= \mu_{\Sigma}(H)/|G :H|$ so that $\theta_{\Sigma,G}=\sum_{H\le G}\alpha_H[H]$. Then, the $\alpha_H$ for each conjugacy class are given in the following table:
	\begin{figure}[H]
		\begin{tabular}{ c | c c c c c c c c c c c}
			&$1$ & $C_q $& $C_p$ & $C_p$ &$C_p$ &$C_p$& $C_p \rti C_q $& $C_p \rti C_q$ & $C_p \ti C_p $& $G$\\
			\hline
			\#conjugates & $1$ &$p^2$ & $1$&$1$&$q$&$q$&$p$ &$p$ &$1$&$1$\\
			$\mu_{\Sigma}(H)$ & $-p^2$ & $1 $& $p$&$ p$ & $0$&$0$ &$-1$&$-1$&$-1$&$1$\\
			$\alpha_H$ & $-1/q$ & $1/p^2$ & $1/q$ & $1/q$ & $0$ & $0$ & $-1/p$& $-1/p$ & $-1/q$ & $1$ 
		\end{tabular}
	\end{figure}
	And we find $\theta_{\Sigma,G}(\one_G)=1$. Thus, the relation $\theta_{\Sigma,G}$ is trivial on all permutation representations. As $G$ is $p$-hypo-elementary, $\theta_{\Sigma,G}$ is not a $p$-relation and the permutation pairing of Construction \ref{permpairingconstr} is degenerate.
\end{example}
\begin{remark}
It is not clear to the author if, for any of the above groups, there exists a lattice $M$ for which $\theta_{\Sigma,G}(M)\neq 1$. Necessarily, such an $M$ must not be induced from a proper subgroup (Lemma \ref{regcontoolkit}). For $G=C_3\ti C_3\ti S_3$, $A(\z_{3}[G],\perm)=A(\z_{(3)}[G],\triv)=A(\z_{3}[G],\triv)$ since $G$ has the same number of species as conjugacy classes of $p$-hypo-elementary subgroups (Lemma \ref{condeqcond} {\it iv)}). Thus, such an $M$ would also have to not have trivial source.
\end{remark}

\bibliographystyle{amsalphainitials}

\renewcommand{\refname}{References}
\bibliography{MyCollection}
\vspace{0.7cm}
\textsc{Mathematics Institute, Zeeman Building, University of Warwick, Coventry
	CV4 7AL, UK}

\noindent \emph{E-mail address:}	\texttt{alex.torzewski@gmail.com}

\end{document}